\theoremstyle{plain}
\newtheorem{thm}{Theorem}[section]
\newtheorem{lem}[thm]{Lemma}
\newtheorem{prop}[thm]{Proposition}
\newtheorem{cor}[thm]{Corollary}
\theoremstyle{definition}
\newtheorem*{defn}{Definition}
\newtheorem*{ex}{Example}
\theoremstyle{remark}
\newtheorem*{rem}{Remark}
\newtheorem*{ack}{Acknowledgement}
\newcommand{\newoperator}[2]{\DeclareMathOperator{#1}{#2}}
\newoperator{\spn}{span}
\newoperator{\graph}{graph}
\newoperator{\im}{Im}
\newoperator{\diag}{diag}
\newoperator{\Aut}{Aut}
\title[Toeplitz-composition $C^{*}$-algebras]
{Quotient algebras of Toeplitz-composition $C^{*}$-algebras
for finite Blaschke products}
\author{Hiroyasu Hamada}
\address{Graduate School of Mathematics, Kyushu University,
Motooka, Fukuoka, 819-0395, Japan.}
\email{h-hamada@math.kyushu-u.ac.jp}
\keywords{composition operator, Blaschke product, Toeplitz operator, 
$C^*$-algebra, complex dynamical system}
\subjclass[2010]{Primary 46L55, 47B33; Secondary 37F10, 46L08}
\begin{document}

\begin{abstract}
Let $R$ be a finite Blaschke product.
We study the $C^*$-algebra $\mathcal{TC}_R$
generated by both the composition operator $C_R$
and the Toeplitz operator $T_z$ on
the Hardy space. 
We show that the simplicity of the quotient algebra $\mathcal{OC}_R$
by the ideal of the compact operators can be characterized by
the dynamics near the
Denjoy-Wolff point of $R$
if the degree of $R$ is at least two.
Moreover we prove that the degree of finite Blaschke products is
a complete isomorphism invariant for the class of $\mathcal{OC}_R$
such that $R$ is a finite Blaschke product of degree
at least two and the Julia set of $R$ is the unit circle,
using the Kirchberg-Phillips classification theorem.
\end{abstract}

\maketitle

\section{Introduction}

Let $\mathbb{D}$ be the open unit disk in the complex plane and
$H^2 (\mathbb{D})$ the Hardy space of
analytic functions whose power series have square-summable
coefficients.
For an analytic self-map
$\varphi:\mathbb{D} \rightarrow \mathbb{D} $,
the composition operator
$C_{\varphi} :H^2 (\mathbb{D}) \rightarrow H^2 (\mathbb{D})$
is defined by $C_{\varphi} g = g \circ \varphi$ for $g \in H^2 (\mathbb{D})$
and known to be a bounded operator by the Littlewood subordination theorem
\cite{L}.
The study of composition operators
on the Hardy space $H^2 (\mathbb{D})$ gives a fruitful interplay
between complex analysis and operator theory as
shown, for example, in the books of
Cowen and MacCluer \cite{CM}, and Shapiro \cite{Sha}.  

Let $L^2(\mathbb{T})$ denote the square integrable measurable functions on
the unit circle $\mathbb{T}$ with respect to the normalized Lebesgue measure.
The Hardy space $H^2(\mathbb{T})$ is the closed subspace of
$L^2(\mathbb{T})$ consisting of the functions whose negative
Fourier coefficients vanish.
Let  $P_{H^2} : L^2 (\mathbb{T}) \rightarrow H^2 (\mathbb{T}) \subset L^2 (\mathbb{T})$ be the projection.
For $a \in L ^{\infty} (\mathbb{T})$,
the Toeplitz operator $T_a : H^2(\mathbb{T}) \to H^2(\mathbb{T})$
is defined by $T_a = P_{H^2} M_a$,
where $M_a$ is the multiplication operator by $a$ on $L^2(\mathbb{T})$.
We recall that the $C^*$-algebra $\mathcal{T}$ generated by
the Toeplitz operator $T_z$ contains all continuous symbol Toeplitz
operators, and  its quotient by the ideal
of the compact operators
on $H^2 (\mathbb{T})$ is isomorphic to the commutative $C^*$-algebra
$C(\mathbb{T})$ of all continuous functions on the unit circle $\mathbb{T}$.
Recently several authors considered $C^*$-algebras generated by
composition operators (and Toeplitz operators).
Most of their studies have focused on composition operators induced by
linear fractional maps, that is, rational functions of degree one.
For an analytic self-map
$\varphi:\mathbb{D} \rightarrow \mathbb{D} $,
we denote by
$\mathcal{TC}_\varphi$ the Toeplitz-composition $C^*$-algebra
generated by both the composition operator $C_{\varphi}$ and the Toeplitz
operator $T_z$. Its quotient algebra  by the ideal
$\mathcal{K}$  of the compact operators is denoted by
$\mathcal{OC} _{\varphi}$.
Kriete, MacCluer and Moorhouse
\cite{KMM1, KMM2}
studied the Toeplitz-composition $C^*$-algebra $\mathcal{TC}_\varphi$
for a certain linear fractional self-map $\varphi$.
They describe the quotient algebra $\mathcal{OC} _{\varphi}$
concretely as a subalgebra of $C(\Lambda) \otimes M_2(\mathbb C)$
for a compact space $\Lambda$.
They also considered the $C^*$-algebra generated by $T_z$ and
a finite collection $\{C_{\varphi_1}, \dots, C_{\varphi_n}\}$ of
composition operators whose symbols are such linear fractional self-maps
\cite{KMM3}.
If $\varphi(z) = e^{-2\pi i\theta}z$ for some irrational
number $\theta$, then the Toelitz-composition $C^*$-algebra
$\mathcal{TC}_{\varphi}$ is an extension of the irrational rotation algebra
$A_{\theta}$ by $\mathcal{K}$ and studied in Park \cite{Pa}.
Jury \cite{J2} investigated the $C^*$-algebra generated by
a group of composition operators with the symbols belonging to a
non-elementary Fuchsian group $\Gamma$ to relate it with
extensions of the crossed product $C(\mathbb{T}) \rtimes \Gamma$
by $\mathcal{K}$.
He applied the same analysis to the $C^*$-algebra
$\mathcal{OC}_\varphi$ for a M\"{o}bius transformation $\varphi$
and had the following theorem.

\begin{thm}[Jury \cite{J1}] \label{thm:Jury}
Let $\varphi$ be a M\"{o}bius transformation of $\mathbb{D}$
and let $\alpha$ be an automorphism such that
$\alpha(a) = a \circ{\varphi}$ for $a \in C(\mathbb{T})$.
\begin{enumerate}
\item
If $\varphi$ has finite order $q$, $\mathcal{OC}_\varphi$ is
isomorphic to $C(\mathbb{T}) \rtimes_\alpha \mathbb{Z} / q \mathbb{Z}$.
\item
Otherwise, 
$\mathcal{OC}_\varphi$ is isomorphic to
$C(\mathbb{T}) \rtimes_\alpha \mathbb{Z}$.
\end{enumerate}
\end{thm}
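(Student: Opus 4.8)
The plan is to strip $C_\varphi$ down to a \emph{unitary} by dividing off a Toeplitz weight, and then to recognise the quotient as a crossed product via the universal property. First, since a M\"obius transformation of $\mathbb{D}$ is a conformal automorphism, I would write $\varphi(z)=\lambda(z-a)/(1-\bar a z)$ with $|\lambda|=1$, $|a|<1$, note that $\varphi$ extends to a homeomorphism of $\overline{\mathbb{D}}$ and that $\varphi'$ is zero-free on $\overline{\mathbb{D}}$, and pick $g\in A(\mathbb{D})$, invertible in $A(\mathbb{D})$, with $g^2=\varphi'$. Then I would introduce the weighted composition operator $W_\varphi$ on $H^2(\mathbb{T})$ defined by $W_\varphi f=(f\circ\varphi)\,g$. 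A change of variables on $\mathbb{T}$ shows $W_\varphi$ is isometric, and, choosing the branches of $(\varphi')^{1/2}$ and $((\varphi^{-1})')^{1/2}$ compatibly, the chain rule gives $W_\varphi W_{\varphi^{-1}}=W_{\varphi^{-1}}W_\varphi=I$, so $W_\varphi$ is unitary. Since $W_\varphi=T_g C_\varphi$ and $C_\varphi=T_{1/g}W_\varphi$ with $T_g,T_{1/g}\in\mathcal{T}=C^*(T_z)$, one gets $\mathcal{TC}_\varphi=C^*(T_z,W_\varphi)$; and $\mathcal{K}\subseteq\mathcal{T}\subseteq\mathcal{TC}_\varphi$ (the Toeplitz algebra contains $I-T_zT_z^*$ and acts irreducibly on $H^2(\mathbb{T})$), so $\mathcal{OC}_\varphi$ is well defined.

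Next I would show that, modulo the compacts, $W_\varphi$ conjugates symbols exactly the way $\alpha$ acts. The same formula defines a unitary $\widetilde W_\varphi$ on $L^2(\mathbb{T})$; since $g,1/g\in H^\infty$, both $\widetilde W_\varphi$ and $\widetilde W_\varphi^{-1}=\widetilde W_{\varphi^{-1}}$ carry $H^2(\mathbb{T})$ into itself, so $\widetilde W_\varphi$ leaves $H^2(\mathbb{T})$ and its orthocomplement invariant and therefore commutes with $P_{H^2}$. A direct computation gives $\widetilde W_\varphi M_a\widetilde W_\varphi^*=M_{a\circ\varphi}$ for $a\in L^\infty(\mathbb{T})$, which together with $[\widetilde W_\varphi,P_{H^2}]=0$ yields
\[
W_\varphi T_a W_\varphi^*=T_{a\circ\varphi}=T_{\alpha(a)}\qquad(a\in C(\mathbb{T})).
\]
Passing to the Calkin algebra $\mathcal{Q}=B(H^2(\mathbb{T}))/\mathcal{K}$, the image of $\mathcal{T}$ in $\mathcal{Q}$ is $\mathcal{T}/\mathcal{K}\cong C(\mathbb{T})$ via the symbol map (with $T_z\mapsto z$), so I would conclude that $\mathcal{OC}_\varphi=C^*(C(\mathbb{T}),u)$ in $\mathcal{Q}$, where $u$ is the image of $W_\varphi$ — a unitary satisfying $uau^*=\alpha(a)$ for all $a\in C(\mathbb{T})$.

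It then remains to identify $C^*(C(\mathbb{T}),u)$ with the crossed product. In the infinite-order case, $(C(\mathbb{T})\hookrightarrow\mathcal{Q},\,u)$ is a covariant representation of $(C(\mathbb{T}),\mathbb{Z},\alpha)$, so — the full and reduced crossed products coinciding since $\mathbb{Z}$ is amenable — the universal property yields a surjection $\pi\colon C(\mathbb{T})\rtimes_\alpha\mathbb{Z}\to\mathcal{OC}_\varphi$ restricting to the identity on $C(\mathbb{T})$. In the finite-order case ($q\ge 2$; $q=1$ is trivial), an iterated application of the chain rule shows that $(W_\varphi^{q}f)(z)$ equals $f(z)$ times a holomorphic square root of $(\varphi^q)'(z)\equiv1$, hence $W_\varphi^{q}$ is a scalar; after rescaling $u$ by a suitable unimodular constant we may assume $u^q=1$, and a covariant representation of $(C(\mathbb{T}),\mathbb{Z}/q\mathbb{Z},\alpha)$ gives a surjection $\pi\colon C(\mathbb{T})\rtimes_\alpha\mathbb{Z}/q\mathbb{Z}\to\mathcal{OC}_\varphi$ as before. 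In either case $\pi$ is injective: it restricts to the canonical inclusion of $C(\mathbb{T})$, so $\ker\pi\cap C(\mathbb{T})=0$, while the underlying action is topologically free — when $\varphi$ has infinite order each $\varphi^{n}$ with $n\ne 0$ is a nonidentity M\"obius map, hence has at most two fixed points on $\mathbb{T}$; when $\varphi$ has finite order $q\ge 2$ it is elliptic, hence conjugate in $\Aut(\mathbb{D})$ to a rotation $z\mapsto e^{2\pi i p/q}z$ with $\gcd(p,q)=1$, so the $\mathbb{Z}/q\mathbb{Z}$-action it generates on $\mathbb{T}$ is free. By the intersection property for crossed products of topologically free actions, every nonzero ideal of $C(\mathbb{T})\rtimes_\alpha G$ meets $C(\mathbb{T})$ nontrivially; hence $\ker\pi=0$ and $\pi$ is the desired isomorphism.

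The technical core is the relation $W_\varphi T_aW_\varphi^*\equiv T_{\alpha(a)}$ modulo $\mathcal{K}$, and its crucial input is that for an \emph{automorphism} $\varphi$ the weighted composition operator $W_\varphi$ is genuinely unitary and commutes with $P_{H^2}$; once that is secured, everything else is bookkeeping with symbols. The one substantive external ingredient — and the place where a fully self-contained proof would require real work — is the intersection property (topological freeness of the action implies faithfulness of the canonical surjection onto a crossed product); alternatively one can construct a faithful conditional expectation $\mathcal{OC}_\varphi\to C(\mathbb{T})$ matching the canonical expectation of the crossed product and deduce injectivity from that.
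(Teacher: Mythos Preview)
The paper does not itself prove Theorem~\ref{thm:Jury}; it is quoted as Jury's result and invoked verbatim for the finite-order case in Theorem~\ref{thm:uniqueness}. For the infinite-order M\"obius case the paper's own machinery does yield an independent proof, but by a different route: it uses Jury's covariant relation $C_R^{*}T_aC_R=T_{A_R(a)}$ (Theorem~\ref{thm:cp_map}) to build the rank-one Hilbert bimodule $X_R$, checks in Proposition~\ref{prop:relation} that $(\pi(T_a),\pi(T_\xi C_R))$ is a covariant representation, and then proves uniqueness (Proposition~\ref{prop:uniqueness}) by hand --- Lemmas~\ref{lem:matrix}--\ref{lem:free} construct a conditional expectation from the density of aperiodic points, which is exactly topological freeness in disguise. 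The identification $\mathcal{O}_{X_R}\cong C(\mathbb{T})\rtimes_\alpha\mathbb{Z}$ for a rank-one module is then standard.

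Your argument is correct and genuinely more direct for the M\"obius case. The key simplification is that for an automorphism of $\mathbb{D}$ you can promote $C_\varphi$ to an honest unitary $W_\varphi=T_gC_\varphi$ with $g^2=\varphi'$, and the covariance $W_\varphi T_a W_\varphi^{*}=T_{\alpha(a)}$ holds \emph{exactly}, not merely modulo compacts, because $\widetilde W_\varphi$ commutes with $P_{H^2}$. This lets you bypass the Cuntz--Pimsner formalism entirely and land immediately in classical crossed-product territory; the injectivity step via the intersection property for topologically free actions is the off-the-shelf analogue of the paper's bespoke expectation argument. The trade-off is that your approach does not generalise to $\deg R\ge 2$, where $C_R$ is a proper isometry after normalisation and no unitary implementer exists --- precisely the situation the paper's bimodule framework is designed to handle. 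One small point worth making explicit in your write-up: when you rescale $u$ by a $q$-th root of $\pm1$ to arrange $u^q=1$, note that this does not change the generated $C^*$-algebra, so $\mathcal{OC}_\varphi$ is unaffected.
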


For $0 < s < 1$, let $\varphi_s(z) = sz + (1-s)$. Quertermous \cite{Q}
considered the $C^*$-algebra generated by a semigroup $\{ C_{\varphi_s} \, | \,
0 < s < 1 \}$ and had an exact sequence by the commutator ideal.

It seems that one of easy cases except linear fractional maps
is a rational function of degree two.
We consider the following four
rational functions of degree two:
\begin{align*}
   R_1(z)= \frac{2z^2-1}{2-z^2},& \quad R_2(z)= \frac{2z^2+1}{2+z^2}, \\
   R_3(z)= \frac{3z^2+1}{3+z^2},& \quad R_4(z)= \frac{(3+i)z^2+(1-i)}{(3-i)+(1+i)z^2}.
\end{align*}
Their rational functions resemble each other,
but quotient algebras $\mathcal{OC}_{R_1}, \dots,
\mathcal{OC}_{R_4}$ of Toeplitz-composition $C^*$-algebras
by the ideal $\mathcal{K}$ have a different property.
Quotient algebras $\mathcal{OC}_{R_1}$ and $\mathcal{OC}_{R_3}$
are simple, while
quotient algebras $\mathcal{OC}_{R_2}$ and $\mathcal{OC}_{R_4}$
are not simple.
This is related to a property of complex dynamical systems
$\{ R_j ^{\circ{m}} \} _{m=1} ^{\infty}$ for $j =1, \dots, 4$.
Rational functions $R_1, \dots, R_4$ are finite Blaschke products.

In this paper we consider the quotient algebra $\mathcal{OC}_{R}$ for
a general finite Blaschke product $R$.
We show that the simplicity of the $C^*$-algebra $\mathcal{OC}_R$
can be characterized by the dynamics near the Denjoy-Wolff point of $R$
if the degree of $R$ is at least two.
We note that Watatani and the author \cite{HW}
studied the quotient algebra $\mathcal{OC}_{R}$
when $R$ is finite Blaschke product of degree at least two with
$R(0) =0$.
We showed that
the $C^*$-algebra $\mathcal{OC}_{R}$
is isomorphic to the $C^*$-algebra $\mathcal{O}_{R}(J_R)$
associate with complex dynamical systems introduced by \cite{KW}, which
is simple and purely infinite.

On the other hand, Courtney, Muhly and Schmidt \cite{CMS} studied certain
endomorphisms of $\mathcal{B}(L^2(\mathbb{T}))$ and
$\mathcal{B}(H^2(\mathbb{T}))$,
where $\mathcal{B}(L^2(\mathbb{T}))$ and $\mathcal{B}(H^2(\mathbb{T}))$ are
the $C^*$-algebra of all bounded operators on $L^2(\mathbb{T})$
and $H^2(\mathbb{T})$ respectively.
Let $\varphi$ be an inner function and let $\alpha$ be the induced
endomorphism of
$L^{\infty} (\mathbb{T})$ defined by
$\alpha(a) = a \circ \varphi$ for $a \in L^{\infty} (\mathbb{T})$.
They considered an endomorphism $\beta$
of $\mathcal{B}(L^2(\mathbb{T}))$ satisfying
$\beta(M_a) = M_{\alpha (a)}$  for $a \in L^{\infty} (\mathbb{T})$
and an endomorphism $\beta_{+}$ of
$\mathcal{B}(H^2(\mathbb{T}))$ satisfying
$\beta_{+} (T_a) = T_{\alpha (a)}$ for $a \in L^{\infty} (\mathbb{T})$.
They described such endomorphisms on $\mathcal{B}(H^2 (\mathbb{T}))$
using orthonormal bases of the Hilbert space
$H^2(\mathbb{T}) \ominus T_\varphi H^2 (\mathbb{T})$.
Moreover, when $\varphi = R$ is a finite Blaschke product,
they considered a Hilbert bimodule $L^\infty (\mathbb{T})_{\mathcal{L}}$
over $L^\infty (\mathbb{T})$ and studied
endomorphisms of $\mathcal{B}(L^2(\mathbb{T}))$ satisfying the above
condition.

Fortheremore,
Jury \cite{J1} showed that, if $\varphi$ is a M\"{o}bius transformation
of $\mathbb{D}$, then the quotient algebra
$\mathcal{OC}_\varphi$ is isomorphic to the crossed product by the integer
group or the cyclic group as in Theorem \ref{thm:Jury}.
Let $R$ be a finite Blaschke product of degree at least two with
$R(0) =0$.
Watatani and the author \cite{HW}
proved that the quotient algebra $\mathcal{OC}_{R}$
is isomorphic to the $C^*$-algebra $\mathcal{O}_R(J_R)$ associated
with the complex dynamical system introduced in \cite{KW}.
The $C^*$-algebra $\mathcal{O}_R(J_R)$ is defined as a Cuntz-Pimsner
algebra \cite{Pi}.
In this paper we extend both of these to the case for
a general finite Blaschke product $R$.
We show that the $C^*$-algebra $\mathcal{OC}_{R}$ is isomorphic to
the crossed product $C(\mathbb{T}) \rtimes_\alpha \mathbb{Z} / q \mathbb{Z}$
in Theorem \ref{thm:Jury} or a Cuntz-Pimsner algebra $\mathcal{O}_{X_R}$
associated to a Hilbert bimodule $X_R$ over $C(\mathbb{T})$.

In the proof of this theorem,
one of the keys is to analyze operators of the
form $C_R ^* T_a C_R$ for $a \in L^{\infty}(\mathbb{T})$.
Watatani and the author \cite{HW} showed that,
if $R$ is a finite Blaschke product of degree at least two with
$R(0) =0$, then
the operator $C_R ^* T_a C_R$ is a Toeplitz operator $T_{\mathcal{L}_R(a)}$.
Courtney, Muhly and Schmidt \cite{CMS} extend this
to the case for a general finite Blaschke product.
On the other hand, Jury \cite{J} independently proved a covariant relation
$C_\varphi ^* T_a C_\varphi = T_{A_\varphi (a)}$
for an inner function $\varphi$,
where $A_\varphi$ is the Aleksandrov operator
defined by Aleksandrov-Clark measures \cite{A, Cl}.
If $\varphi = R$ is a finite Blaschke product degree at least two with
$R(0)=0$, then the Aleksandrov
operator $A_R$ is equal to $\mathcal{L}_R$ defined in \cite{HW}.
More generally, Jury also analyzed
the operator $C_\varphi ^* T_a C_\varphi$ for
any analytic self-map on $\mathbb{D}$.
Let $\alpha : C(\mathbb{T}) \to  C(\mathbb{T})$
be the endomorphism $\alpha$ induced by a finite Blaschke product $R$
such that $\alpha(a) = a \circ R$ for $a \in C(\mathbb{T})$.
We should remark that $A_R$ is a transfer operator for
the pair $(C(\mathbb{T}), \alpha)$ in the sense of Exel in \cite{E}.

As noted above, Jury obtained
the covariant relation
$C_\varphi ^* T_a C_\varphi = T_{A_\varphi (a)}$
for an inner function $\varphi$ and $a \in L^{\infty}(\mathbb{T})$.
Exel and Vershik \cite{EV} considered
similar covariant relations on $L^2$ spaces.
Let $(\Omega, \mu)$ be a measure space and let $T$ be a measure-preserving
transformation of $(\Omega, \mu)$.
We denote $S$ by the composition operator on
$L^2 (\Omega, \mu)$ induced by $T$.
They considered a covariant relation
$S ^* M_a S = M_{\mathcal{L}(a)}$ for
$a \in L^\infty (\Omega, \mu)$ under some conditions.

We discuss the difference between the Hilbert bimodule $X_R$ and
the Hilbert bimodule $L^\infty (\mathbb{T})_{\mathcal{L}}$
considered in \cite{CMS}.
Two  Hilbert bimodules are almost the same.
But the inner product of $X_R$ is slightly different from
that of $L^\infty (\mathbb{T})_{\mathcal{L}}$.
The inner product of $X_R$
is given by a weighted sum defined by the weight function
$\frac{1}{|R'|}$ and
naturally comes from the covariant relation
$C_R ^* T_a C_R = T_{A_R (a)}$
for $a \in L^{\infty} (\mathbb{T})$.
However the inner product of $L^\infty (\mathbb{T})_{\mathcal{L}}$
is given by a weighted sum defined by the weight function
$\frac{1}{n}$, where $n$ is the degree
of the finite Blaschke product $R$.
Thanks to the definition of the inner
product of $X_R$, we can prove that
the Hilbert bimodule $X_R$ has a finite orthonormal basis
$\{u_i \}_{i=1} ^n$ which is the first $n$-th
functions of the Takenaka-Malmquist basis \cite{M, T} of
$H^2(\mathbb{T})$.
The Takenaka-Malmquist basis
is known as an orthonormal basis of $L^2(\mathbb{T})$ or
$H^2(\mathbb{T})$.

We also compute the K-group of the $C^*$-algebra $\mathcal{OC}_R$,
using the six term exact sequence obtained by
Pimsner \cite{Pi} and the finite orthonormal basis $\{u_i \}_{i=1} ^n$
of the Hilbert bimodule $X_R$.
Moreover we apply
the Kirchberg-Phillips classification theorem \cite{Ki, Ph} to
the $C^*$-algebra $\mathcal{OC}_R$.
We show that the degree of finite Blaschke products is
a complete isomorphism invariant for the class of $\mathcal{OC}_R$
such that $R$ is a finite Blaschke product of degree
at least two and the Julia set of $R$ is the unit circle $\mathbb{T}$.


\section{Preliminaries}
\subsection{Toeplitz-composition $C^*$-algebras}

Let $L^2(\mathbb{T})$ denote the square integrable measurable functions on
$\mathbb{T}$ with respect to the normalized Lebesgue measure $m$.
The Hardy space $H^2(\mathbb{T})$ is the closed subspace of 
$L^2(\mathbb{T})$ consisting of the functions whose negative 
Fourier coefficients vanish. We put $H^{\infty} (\mathbb{T}) :=
H^2(\mathbb{T}) \cap L^{\infty} (\mathbb{T})$. 

The Hardy space $H^2(\mathbb{D})$ is the Hilbert space consisting
of all analytic  functions $g(z) = \sum_{k=0} ^{\infty} c_k z^ k$
on the open unit disk $\mathbb{D}$ such that
$\sum_{k=0} ^{\infty} | c_k | ^2 < \infty$.
The inner product is given  by
\[
 \langle g, h \rangle = \sum_{k=0} ^{\infty} c_k \overline{d_k} \quad
\text{for} \quad
g(z)= \sum_{k=0} ^{\infty} c_k z^k \ \ \text{and} \ \ 
h(z) = \sum_{k=0} ^{\infty} d_k z^k.
\]

We can identify $H^2 (\mathbb{D})$ with $H^2(\mathbb{T})$ by the
following way.
For $g \in H^2 (\mathbb{D})$, there exists 
\[
 \widetilde{g}(e^{i \theta}) :=\lim_{r \to 1-} g(re^{i \theta})  \ \
 \a.e. \theta
\]
and $\widetilde{g} \in H^2(\mathbb{T})$ by Fatou's theorem.
The inverse $P[f]$ is given as the Poisson integral 
\[
 P[f](z) := \int_{\mathbb{T}} P_z (\zeta) f(\zeta) \ dm(\zeta), \quad z \in \mathbb{D}
\]
for $f \in H^2 (\mathbb{T})$, where $P_z$ is the Poisson kernel defined by
\[
 P_z (\zeta) = \frac{1-|z|^2}{|\zeta - z|^2}, \quad z \in \mathbb{D},
 \ \zeta \in \mathbb{T}.
\]
In this paper we identify $H^2 (\mathbb{T})$ with $H^2(\mathbb{D})$
and use the same notation $H^2$ if no confusion can arise.
An analytic self-map $\varphi$ is called {\it inner}
if $|\widetilde{\varphi} (e^{i \theta}) | = 1$ for almost every $\theta$.

Let  $P_{H^2} : L^2 (\mathbb{T}) \rightarrow H^2 (\mathbb{T}) \subset L^2 (\mathbb{T})$ be the projection.
For $a \in L^{\infty}(\mathbb{T})$,  the Toeplitz operator $T_a$ on
$H^2(\mathbb{T})$ is defined by $T_a f = P_{H^2} a f$
for $f \in H^2 (\mathbb{T})$.
Let $\varphi: \mathbb{D} \rightarrow \mathbb{D}$ be an analytic 
self-map. 
Then the composition operator $C_{\varphi}$ on $H^2 (\mathbb{D})$ is defined by
$C_{\varphi} g = g \circ \varphi$ for $g \in H^2 ({\mathbb{D}})$. By the Littlewood subordination theorem \cite{L}, $C_{\varphi}$ is always bounded.  

We can regard Toeplitz operators and composition operators as acting
on the same Hilbert space.
Put
\begin{align*}
  F(z) = \begin{cases}
          f(z), \quad \quad \ z \in \mathbb{T}, \\
          P[f](z), \quad z \in \mathbb{D},
         \end{cases}
  \quad
  G(z) = \begin{cases}
          \widetilde{g}(z), \quad z \in \mathbb{T}, \\
          g(z), \quad z \in \mathbb{D},
         \end{cases}
\end{align*}
for $f \in H^2(\mathbb{T})$ and $g \in H^2(\mathbb{D})$.
By Ryff \cite[Theorem 2]{Ry}, we know that 
$\widetilde{g \circ \varphi} =
G \circ \widetilde{\varphi}$ for $g \in H^2 (\mathbb{D})$.
If we consider $C_\varphi$ as an operator on $H^2(\mathbb{T})$,
then $C_{\varphi}f = P \widetilde{[f] \circ \varphi}
= F \circ \widetilde{\varphi}$ for $f \in H^2(\mathbb{T})$.
Moreover, if $\varphi$ is an inner function,
we have $C_{\varphi}f = f \circ \widetilde{\varphi}$
for $f \in H^2(\mathbb{T})$,
since $|\widetilde{\varphi} (e^{i \theta}) | = 1$ for almost every $\theta$.


\begin{defn} 
For an analytic self-map
$\varphi:\mathbb{D} \rightarrow \mathbb{D} $, 
we denote by $\mathcal{TC} _{\varphi}$ the $C^*$-algebra generated
by the Toeplitz operator $T_z$ and 
the composition operator  $C_{\varphi}$. 
The $C^*$-algebra $\mathcal{TC} _{\varphi}$ is 
called the {\it Toeplitz-composition $C^*$-algebra} for symbol $\varphi$. 
Since $\mathcal{TC} _{\varphi}$ contains the ideal 
$\mathcal{K}(H^2)$ of compact operators,  
we define a $C^*$-algebra $\mathcal{OC} _{\varphi}$ to be the quotient
$C^*$-algebra $\mathcal{TC} _{\varphi} / \mathcal{K}(H^2)$.
\end{defn}

By abuse of notation, from now on we write $\varphi$ instead of
$\widetilde{\varphi}$.


\subsection{Aleksandrov operators and a covariant relation}

We recall Aleksandrov-Clark measures \cite{A, Cl} and Aleksandrov operators.
The general reference is \cite[Chapter 9]{CMR} for example.

\begin{defn}
Let $\varphi$ be an analytic self-map on $\mathbb{D}$.
For $\alpha \in \mathbb{T}$, there exists a unique measure $\mu_\alpha$
on $\mathbb{T}$ such that
\[
  \frac{1-|\varphi(z)|^2}{|\alpha - \varphi(z)|^2} = \int_\mathbb{T}
  \frac{1-|z|^2}{|\zeta - z|^2} \ d \mu_\alpha(\zeta)
\]
for $z \in \mathbb{D}$ by the Herglotz theorem (see for example
\cite[Theorem 9.1.1]{CMR}).
We call the family $\{\mu_\alpha\}_{\alpha \in \mathbb{T}}$
the family {\it Aleksandrov-Clark measures}
of $\varphi$.
\end{defn}


Each $\mu_\alpha$ has a Lebesgue decomposition
\[
  \mu_\alpha = h_\alpha m + \sigma_\alpha.
\]
The absolutely continuous part of $\mu_\alpha$ is given by
\[
  h_\alpha (\zeta) = \frac{1- |\varphi(\zeta)|^2}{|\alpha - \varphi(\zeta)|^2}
\]
for $\zeta \in \mathbb{T}$.
We note that $\mu_\alpha = \sigma_\alpha$ for $\alpha \in \mathbb{T}$
if $\varphi$ is an inner function.
For a bounded Borel function $a$ on $\mathbb{T}$, we define
\[
  A_\varphi(a)(\alpha) = \int_\mathbb{T} a(\zeta) \ d \mu_\alpha (\zeta)
\]
for $\alpha \in \mathbb{T}$. Then the function $A_\varphi (a)$ is also a
bounded Borel function
and $A_\varphi$ extends to a bounded operator
on $L^{\infty}(\mathbb{T})$.
Moreover $A_\varphi$ is bounded on $C(\mathbb{T})$. 
We call $A_\varphi$ the {\it Aleksandrov operator}.


The following theorem was proved by Jury.

\begin{thm}[Jury {\cite[Corollary 3.4]{J}}] \label{thm:cp_map}
Let $\varphi:\mathbb{D} \to \mathbb{D}$ be an inner function. Then we have
\[
  C_\varphi ^* T_a C_\varphi = T_{A_\varphi(a)}
\]
for $a \in L^{\infty}(\mathbb{T})$.
\end{thm}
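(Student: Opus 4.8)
The plan is to prove the operator identity by computing the sesquilinear form $(f,g)\mapsto\langle C_\varphi^*T_aC_\varphi f,g\rangle$ on $H^2=H^2(\mathbb{T})$ and identifying it with $(f,g)\mapsto\langle T_{A_\varphi(a)}f,g\rangle$ for every $f,g\in H^2$ (which suffices, since these forms determine the operator). Since $\varphi$ is inner, $C_\varphi h = h\circ\varphi$ on $H^2$ in terms of boundary values (as recalled above, via Ryff's theorem \cite{Ry}); in particular $C_\varphi f$ and $C_\varphi g$ lie in $H^2$, so the projection defining $T_a=P_{H^2}M_a$ disappears under the inner product with $C_\varphi g$, and
\[
  \langle C_\varphi^* T_a C_\varphi f, g\rangle
  = \langle T_a (f\circ\varphi),\; g\circ\varphi\rangle
  = \langle a\,(f\circ\varphi),\; g\circ\varphi\rangle_{L^2(\mathbb{T})}
  = \int_{\mathbb{T}} a(\zeta)\,(f\circ\varphi)(\zeta)\,\overline{(g\circ\varphi)(\zeta)}\,dm(\zeta).
\]
On the other hand, since $g\in H^2$, $\langle T_{A_\varphi(a)}f,g\rangle = \int_{\mathbb{T}} A_\varphi(a)(\alpha)\,f(\alpha)\,\overline{g(\alpha)}\,dm(\alpha)$. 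Thus the theorem reduces to the equality of these two integrals over $\mathbb{T}$.

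To pass from the first integral to the second I would invoke two standard properties of the Aleksandrov--Clark measures $\{\mu_\alpha\}$ of an inner function (see \cite[Chapter 9]{CMR}): the disintegration formula $\int_{\mathbb{T}} h\,dm = \int_{\mathbb{T}}\big(\int_{\mathbb{T}} h\,d\mu_\alpha\big)\,dm(\alpha)$ for $h\in L^1(\mathbb{T})$, and the fact that $\varphi=\alpha$ holds $\mu_\alpha$-almost everywhere. Applying the first to $h = a\,(f\circ\varphi)\,\overline{(g\circ\varphi)}\in L^1(\mathbb{T})$, the point is to compute the inner integral $\int_{\mathbb{T}} a\,(f\circ\varphi)\,\overline{(g\circ\varphi)}\,d\mu_\alpha$ for $m$-almost every $\alpha$. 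The $m$-null exceptional set on which $C_\varphi f = f\circ\varphi$ or $C_\varphi g = g\circ\varphi$ fails pointwise is $\mu_\alpha$-null for $m$-almost every $\alpha$ (again by the disintegration formula), and on its complement the identity $\varphi=\alpha$ $\mu_\alpha$-a.e.\ forces $(f\circ\varphi)(\zeta)=f(\alpha)$ and $(g\circ\varphi)(\zeta)=g(\alpha)$; hence for $m$-almost every $\alpha$ the inner integral equals $f(\alpha)\,\overline{g(\alpha)}\int_{\mathbb{T}} a\,d\mu_\alpha = f(\alpha)\,\overline{g(\alpha)}\,A_\varphi(a)(\alpha)$. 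Integrating in $\alpha$ then yields $\int_{\mathbb{T}} A_\varphi(a)\,f\,\overline{g}\,dm$, which completes the reduction.

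The algebra here is short, so I expect the only genuine obstacle to be the measure-theoretic bookkeeping: because the $\mu_\alpha$ may be purely singular, one must argue carefully that ``$f\circ\varphi$ paired with $\mu_\alpha$'' really collapses to the constant $f(\alpha)$ for $m$-almost every $\alpha$, which is precisely where Ryff's theorem, the $\mu_\alpha$-a.e.\ identity $\varphi=\alpha$, and the disintegration of Lebesgue measure are combined. A slightly longer alternative would be to check the identity first for Poisson-kernel symbols $a=P_w$, $w\in\mathbb{D}$ --- for which $A_\varphi(P_w)=P_{\varphi(w)}$ is immediate from the Herglotz relation defining the $\mu_\alpha$ --- then extend by linearity and by norm-density of $\spn\{P_w : w\in\mathbb{D}\}$ in $C(\mathbb{T})$, and finally from $C(\mathbb{T})$ to $L^\infty(\mathbb{T})$ by a standard weak$^*$/weak-operator-topology continuity argument; but the disintegration computation has the advantage of handling general $a\in L^\infty(\mathbb{T})$ in one stroke.
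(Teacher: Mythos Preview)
The paper does not supply its own proof of this theorem: it is quoted as Jury's result \cite[Corollary~3.4]{J} and used as a black box, so there is no in-paper argument to compare against. Your task, then, is really to give a self-contained justification, and the one you outline is sound. Reducing to the sesquilinear form, using that $C_\varphi g\in H^2$ to drop $P_{H^2}$, and then invoking Aleksandrov's disintegration $m=\int_{\mathbb{T}}\mu_\alpha\,dm(\alpha)$ together with the fact that $\varphi=\alpha$ holds $\mu_\alpha$-a.e.\ (both standard for inner $\varphi$, cf.\ \cite[Chapter~9]{CMR}) is exactly the right mechanism; your remarks about why $m$-null exceptional sets are $\mu_\alpha$-null for $m$-a.e.\ $\alpha$ correctly handle the one genuine subtlety. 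The alternative route via Poisson kernels that you sketch is essentially how one often derives such identities as well, and is closer in spirit to how Aleksandrov operators are usually introduced; either version would be acceptable here, with the disintegration argument being the more efficient of the two.
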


More generally,
a similar theorem holds for any analytic self-map on $\mathbb{D}$
 (see \cite{J} for more details).


Let $R$ be a finite Blaschke product on the Riemann sphere
$\widehat{\mathbb{C}} = \mathbb{C} \cup \{\infty \}$,
that is,
\[
 R(z) = \lambda \prod_{k=1} ^n \frac{z-z_k}{1-\overline{z_k}z},
      \quad z \in \widehat{\mathbb{C}},
\]
where $n \in \mathbb{N}, \, \lambda \in \mathbb{T}$ and $z_1,\dots, z_n \in \mathbb{D}$.
Thus $R$ is a rational function of degree $n$. 
Since $R$ is an inner function, $R$ is an analytic self-map on $\mathbb{D}$.
It is known that
$R$ is a finite Blaschke product of degree one if and only if
$R$ is M\"{o}bius transformation of $\mathbb{D}$ .
We note the following fact 
\[
  | R'(\zeta) | = \sum_{k=1} ^n \frac{1- |z_k|^2}{|\zeta-z_k|^2} \neq 0,
  \quad \zeta \in \mathbb{T}.
\]
Thus the finite Blaschke product $R$ has no branched  points on $\mathbb{T}$.
It is known that the Aleksandrov-Clark measures $\mu_\alpha$
of $R$ is given by
\[
   \mu_\alpha = \sum_{\zeta \in R^{-1}(\alpha)} \frac{1}{|R'(\zeta)|}
   \ \delta_\zeta,
\]
where $\delta_\zeta$ is the Dirac measure of
$\zeta$ on $\mathbb{T}$ \cite[Example 9.2.4]{CMR}.


\subsection{Cuntz-Pimsner algebras}

We recall the construction of Cuntz-Pimsner algebras \cite{Pi} (see also
\cite{K}). 
Let $A$ be a $C^*$-algebra and $X$ be a right Hilbert $A$-module.  
We denote by $\mathcal{L}(X)$ the $C^*$-algebra of the adjointable bounded operators 
on $X$.  
For $\xi$, $\eta \in X$, the operator $\theta _{\xi,\eta}$
is defined by $\theta _{\xi,\eta}(\zeta) = \xi \langle \eta, \zeta \rangle_A$
for $\zeta \in X$. 
The closure of the linear span of these operators is denoted by $\mathcal{K}(X)$. 
We say that 
$X$ is a {\it Hilbert bimodule} (or {\it $C^*$-correspondence}) 
over $A$ if $X$ is a right Hilbert $A$-module 
with a homomorphism $\phi : A \rightarrow \mathcal{L}(X)$. We always assume 
that $\phi$ is injective. 

A {\it representation} of the Hilbert bimodule $X$
over $A$ on a $C^*$-algebra $D$
is a pair $(\rho, V)$ constituted by a homomorphism $\rho: A \to D$ and
a linear map $V: X \to D$ satisfying
\[
  \rho(a) V_\xi = V_{\phi(a) \xi}, \quad
  V_\xi ^* V_\eta = \rho( \langle \xi, \eta \rangle_A)
\]
for $a \in A$ and $\xi, \eta \in X$.
It is known that $V_\xi \rho(b) = V_{\xi b}$ follows
automatically (see for example \cite{K}).
We define a homomorphism $\psi_V : \mathcal{K}(X) \to D$
by $\psi_V ( \theta_{\xi, \eta}) = V_{\xi} V_{\eta}^*$ for $\xi, \eta \in X$
(see for example \cite[Lemma 2.2]{KPW}).
A representation $(\rho, V)$ is said to be {\it covariant} if
$\rho(a) = \psi_V(\phi(a))$ for all $a \in J(X)
:= \phi ^{-1} (\mathcal{K}(X))$.

We call a finite set $\{ u_i \}_{i=1} ^n \subset X$ a {\it finite basis}
of $X$ if
$\xi = \sum_{i=1} ^n u_i \langle u_i, \xi \rangle_A$ for any $\xi \in X$.
Let $A$ be unital.
A finite basis $\{u_i \}_{i=1} ^n$ is said to be {\it orthonormal} if
it satisfies $\langle u_i, u_j \rangle_A = \delta_{ij} 1$
for $i,j= 1, \dots, n$. 
Let $(\rho, V)$ be a representation of $X$ over a unital $C^*$-algebra $A$.
Suppose that $\phi$ and $\rho$ is unital,
and that $X$ has a finite basis $\{u_i\}_{i=1}^n$.
Then the representation $(\rho, V)$ is covariant if and only if
$V$ satisfies $\sum_{i=1} ^n V_{u_i} V_{u_i} ^* = 1$.

Let $(i, S)$ be the representation of $X$ which is universal for
all covariant representations. 
The {\it Cuntz-Pimsner algebra} ${\mathcal O}_X$ is 
the $C^*$-algebra generated by $i(a)$ with $a \in A$ and 
$S_{\xi}$ with $\xi \in X$.
We note that $i$ is known to be injective
\cite{Pi} (see also \cite[Proposition 4.11]{K}).
We usually identify $i(a)$ with $a$ in $A$.
There exists an action $\gamma: \mathbb{T} \to \Aut \mathcal{O}_X$
with $\gamma_{\lambda} (S_\xi) = \lambda S_\xi $ for $\lambda \in \mathbb{T}$
and $\xi \in X$,
which is called the {\it gauge action}.
We denote by $\mathcal{O}_X ^{\mathbb{T}}$ the fixed point algebra of $\gamma$.
We define a faithful conditional expectation of $\mathcal{O}_X$ onto
$\mathcal{O}_X ^{\mathbb{T}}$ by
$E(T) = \int_{\mathbb{T}} \ \gamma_{\lambda} (T) \ dm(\lambda)$ for
$T \in \mathcal{O}_X$.

\begin{lem} \label{lem:rep}
Let $A$ and $D$ be unital $C^*$-algebras and
let $X$ be a right Hilbert $A$-module.
Suppose $\rho: A \to D$ is a unital injective homomorphism, $V: X \to D$
is a linear map and $\{ u_i \}_{i=1} ^n \subset X$ satisfying
\[
  V_\xi \rho (a) = V_{\xi a}, \quad
  V_{\xi} ^* V_{\eta} = \rho(\langle \xi, \eta \rangle_A), \quad
  \sum _{i=1} ^n V_{u_i} V_{u_i} ^* =1
\]
for $a \in A$ and $\xi, \eta \in X$.
Then $V$ is injective and $\{u_i \}_{i=1} ^n$ is a finite basis of X.
Moreover, if
$\{ u_i \}_{i=1} ^n$ satisfies $V_{u_i} ^* V_{u_j} = \delta_{ij} 1$,
then $\{u_i \}_{i=1} ^n$ is a finite orthonormal basis of $X$.
\end{lem}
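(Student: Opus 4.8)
The plan is to extract everything from the three given relations together with the injectivity of $\rho$; no deep machinery is needed. First I would show that $V$ is injective. For any $\xi \in X$ the second relation gives $V_\xi^* V_\xi = \rho(\langle \xi, \xi \rangle_A)$, so if $V_\xi = 0$ then $\rho(\langle \xi, \xi \rangle_A) = 0$, and since $\rho$ is injective this forces $\langle \xi, \xi \rangle_A = 0$, whence $\xi = 0$ by positive-definiteness of the $A$-valued inner product. Thus $V$ is injective.

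Next I would prove that $\{ u_i \}_{i=1}^n$ is a finite basis. Fix $\xi \in X$ and put $\eta := \xi - \sum_{i=1}^n u_i \langle u_i, \xi \rangle_A$. By linearity of $V$ and the first relation applied with $a = \langle u_i, \xi \rangle_A \in A$, one has $V_\eta = V_\xi - \sum_{i=1}^n V_{u_i} \rho(\langle u_i, \xi \rangle_A)$. The second relation rewrites $\rho(\langle u_i, \xi \rangle_A) = V_{u_i}^* V_\xi$, so $V_\eta = \bigl(1 - \sum_{i=1}^n V_{u_i} V_{u_i}^*\bigr) V_\xi$, which vanishes by the third relation. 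Injectivity of $V$ then yields $\eta = 0$, i.e.\ $\xi = \sum_{i=1}^n u_i \langle u_i, \xi \rangle_A$, so $\{ u_i \}_{i=1}^n$ is a finite basis of $X$.

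Finally, under the extra hypothesis $V_{u_i}^* V_{u_j} = \delta_{ij} 1$, the second relation gives $\rho(\langle u_i, u_j \rangle_A) = \delta_{ij} 1 = \rho(\delta_{ij} 1)$, and injectivity of the unital map $\rho$ forces $\langle u_i, u_j \rangle_A = \delta_{ij} 1$; hence the finite basis is orthonormal, proving the last assertion.

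I do not expect a genuine obstacle here: the argument is essentially a bookkeeping exercise organizing the three identities. The only points needing a moment's care are the compatibility of the right module action with $V$ — precisely the identity $V_{u_i \langle u_i, \xi \rangle_A} = V_{u_i}\rho(\langle u_i, \xi \rangle_A)$, which is exactly the first hypothesis — and the deduction of injectivity of $V$ from injectivity of $\rho$ via the inner-product relation; both are immediate.
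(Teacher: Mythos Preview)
Your proof is correct and follows essentially the same approach as the paper: both use the three relations to show $V_\xi = V_{\sum_i u_i\langle u_i,\xi\rangle_A}$ (equivalently $V_{\xi-\sum_i u_i\langle u_i,\xi\rangle_A}=0$) and then invoke injectivity of $V$, with the orthonormality clause handled identically via injectivity of $\rho$. The only cosmetic difference is that the paper cites a reference for the implication ``$\rho$ injective $\Rightarrow$ $V$ injective,'' whereas you spell it out directly from $V_\xi^*V_\xi=\rho(\langle\xi,\xi\rangle_A)$.
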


\begin{proof}
For $\xi \in X$,
\[
  V_\xi = \sum_{i=1} ^n V_{u_i} V_{u_i} ^* V_\xi
  = \sum_{i=1} ^n V_{u_i} \rho ( \langle u_i, \xi \rangle_A)
  = V_{\sum_{i=1} ^n u_i \langle u_i, \xi \rangle_A}.
\]
Since $\rho$ is injective, $V$ is also injective
(see for example \cite[Lemma 2.2]{KPW}).
Thus we have $\xi = \sum_{i=1} ^n u_i \langle u_i, \xi \rangle_A$.
This implies that $\{u_i \}_{i=1} ^n$ is a finite basis of X.
Suppose $\{ u_i \}_{i=1} ^n$
satisfies $V_{u_i} ^* V_{u_j} = \delta_{ij} 1$.
Since $\rho( \langle u_i, u_j \rangle _A) = \delta_{ij} 1$,
we have $\langle u_i, u_j \rangle _A = \delta_{ij} 1$.
\end{proof}

\subsection{Complex dynamical systems}

Let $R$ be a rational function of degree at least two. 
The sequence $\{ R^{\circ m} \}_{m=1} ^{\infty}$ of
iterations of composition by $R$ 
gives a complex dynamical system on the Riemann sphere
$\widehat{\mathbb C}$. 
The {\it Fatou set} $F_R$ of $R$ is the maximal open subset of 
$\widehat{\mathbb C}$ on which $\{ R^{\circ m} \}_{m=1} ^{\infty}$
is equicontinuous (or a normal family), and the {\it Julia set}
$J_R$ of $R$ is the 
complement of the Fatou set in $\widehat{\mathbb C}$. 
It is known that the Julia set $J_R$ is not empty.

We recall the classification of fixed points of
an analytic function $f$.
Let $w_0 \in \mathbb{C}$ be a fixed point of $f$.
The number $| f'(w_0) |$ is called the {\it multiplier} of $f$ at $w_0$.
\begin{enumerate}
\item When $| f'(w_0) | < 1$, we call $w_0$ an {\it attracting} fixed point.
\item When $| f'(w_0) | > 1$, we call $w_0$ a {\it repelling} fixed point. 
\item When $f'(w_0)$ is a root of unity, we call $w_0$ a {\it parabolic}
(or {\it rationally indifferent}, or {\it rationally neutral})
fixed point.
\item When $|f'(w_0)|=1$, but $f'(w_0)$ is not a root of unity,
we call $w_0$ an {\it irrationally indifferent}
(or {\it irrationally neutral}) fixed point.
\end{enumerate}


The local dynamics near a parabolic fixed point of an analytic function is
described by the Leau-Fatou flower theorem.
We may consider the case that $f$ has a parabolic fixed point at
$0$ with multiplier $1$.
Let $f$ be an anaytic function represented by a convergent power series
\[
   f(z) = z + c_{p+1} z^{p+1} + \cdots \quad (c_{p+1} \neq 0)
\]
at 0.
An {\it attracting petal} is a simply connected open set $U$ such that
$0 \in \partial U$, $f(U) \subset U$ and $f^{\circ m}(z) \to 0$ as
$m \to \infty$ for all $z \in U$.
A {\it repelling petal} is an attracting petal for $f^{-1}$.


\begin{thm}[Leau-Fatou flower theorem {\cite[Theorem 10.5]{Mi}}]
\label{thm:petal}
Let $f$ be an analytic function represented by a convergent power series
\[
   f(z) = z + c_{p+1} z^{p+1} + \cdots \quad (c_{p+1} \neq 0)
\]
at 0. Then there exist attracting petals $U_1, \dots , U_p$ and
repelling petal $V_1, \dots, V_p$ such that
\begin{enumerate}
\item
$U_j \cap U_k = \emptyset$ and $V_j \cap V_k = \emptyset$ for $j \neq k$.
\item
The union $\bigcup _j ^p U_j \cup \bigcup _j ^p V_j \cup \{0\}$ is an open
neighborhood of $0$.
\end{enumerate}
\end{thm}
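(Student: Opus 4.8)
This is a classical result (the statement is quoted from Milnor), and the proof I would give is the standard one, via a change of coordinates turning $f$ into a near-translation at $\infty$. First I would normalize: conjugating by a linear map $z \mapsto \lambda z$ with $\lambda^{p} c_{p+1} = -1/p$ we may assume $f(z) = z - \tfrac1p z^{p+1} + O(z^{p+2})$ near $0$. Next I introduce $\iota(z) = z^{-p}$, a degree-$p$ holomorphic covering of a punctured neighbourhood of $0$ onto a punctured neighbourhood of $\infty$, and pick one of its $p$ local inverse branches $\psi$. A short power-series computation shows that the conjugate $F := \iota \circ f \circ \psi$ is univalent near $\infty$ with
\[
  F(w) = w + 1 + O(w^{-1/p}) \qquad (|w| \to \infty),
\]
the error term being single-valued on the chosen sheet. (The $p$ branches give slightly different maps $F$, all of this form; this is the only place where a little care with multivaluedness is needed.)

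Second, I would analyse the near-translation $F$. Fix a small $\epsilon > 0$ and, for $\rho$ large, put $\Pi^{+} = \{\, w : \Re w > \rho - \epsilon |\Im w| \,\}$, a wedge opening toward $+\infty$ of angular width slightly more than $\pi$. Every point of $\Pi^{+}$ has $|w| \gtrsim \rho$, so the error term of $F$ is at most some $\delta_{0} = O(\rho^{-1/p})$ on $\Pi^{+}$; an elementary estimate then gives $F(\Pi^{+}) \subset \Pi^{+}$, injectivity of $F$ on $\Pi^{+}$, and $\Re F^{\circ m}(w) \sim m$, $\Im F^{\circ m}(w) = o(m)$ for every $w \in \Pi^{+}$, so $F^{\circ m}(w) \to \infty$ non-tangentially to the direction $\arg = 0$. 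The same argument applied to $F^{-1}(w) = w - 1 + O(w^{-1/p})$ produces a wedge $\Pi^{-}$ opening toward $-\infty$ with the mirror properties. Since each wedge is wider than a half-plane, $\Pi^{+} \cup \Pi^{-} \supset \{\, |w| > \rho_{1} \,\}$ for some $\rho_{1}$, a full punctured neighbourhood of $\infty$.

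Third, I would pull everything back through $\iota$. Because $\Pi^{+}$ is simply connected in $\mathbb{C} \setminus \{0\}$, its preimage $\iota^{-1}(\Pi^{+})$ has exactly $p$ connected components $U_{1}, \dots, U_{p}$, each mapped biholomorphically onto $\Pi^{+}$ by $\iota$; they are pairwise disjoint, simply connected, with $0$ on each boundary. Through the corresponding branches, the forward invariance and the convergence established for $F$ become $f(U_{j}) \subset U_{j}$ and $f^{\circ m}(z) \to 0$ for $z \in U_{j}$, so each $U_{j}$ is an attracting petal, one around each of the $p$ attracting directions $\arg z = 2\pi j/p$. Likewise the $p$ components $V_{1}, \dots, V_{p}$ of $\iota^{-1}(\Pi^{-})$ are attracting petals of $f^{-1}$, i.e. repelling petals, sitting at the interlaced directions. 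Finally $\bigcup_{j} U_{j} \cup \bigcup_{j} V_{j} = \iota^{-1}(\Pi^{+} \cup \Pi^{-}) \supset \iota^{-1}(\{ |w| > \rho_{1} \}) = \{\, 0 < |z| < \rho_{1}^{-1/p} \,\}$, so adjoining $\{0\}$ gives an open neighbourhood of $0$, as required.

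The main obstacle is the second step: the regions $\Pi^{\pm}$ must be chosen so as to reconcile three competing requirements at once — forward invariance under a map that is only $w \mapsto w + 1$ up to an error that persists over an unbounded region, convergence of the iterates to $\infty$ along a fixed direction, and, jointly, covering a punctured neighbourhood of $\infty$ (which is exactly what fails for honest half-planes and for round disks). Once the wedge shape is pinned down and the $O(w^{-1/p})$ term is controlled on it, the coordinate change and the pull-back are routine.
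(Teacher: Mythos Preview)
The paper does not prove this theorem: it is stated in the preliminaries as a classical result and simply cited from Milnor \cite[Theorem~10.5]{Mi}, with no argument given. Your sketch is the standard textbook proof (essentially Milnor's), via the Fatou coordinate $w = z^{-p}$ that turns $f$ into a near-translation $w \mapsto w + 1 + o(1)$ at infinity, so there is nothing to compare against in the paper itself; your outline is correct and would be accepted as the usual proof of the quoted result.
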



It is known that the Julia set of
a finite Blaschke product of degree at least two
can be computed as follows.

\begin{prop}[{\cite[p. 79]{CG}}] \label{prop:classification}
Let $R$ be a finite Blaschke product of degree at least two.
Then one of the following holds.
\begin{enumerate}
\item
The finite Blaschke product $R$ has a fixed point in $\mathbb{D}$.
\item
The finite Blaschke product $R$ has an attracting fixed point on $\mathbb{T}$.
\item
The finite Blaschke product $R$ has a parabolic fixed point on $\mathbb{T}$
with two attracting petals.
\item
The finite Blaschke product $R$ has a parabolic fixed point on $\mathbb{T}$
with one attracting petal.
\end{enumerate}
Moreover, if $R$ satisfies $(1)$ or $(3)$, then the Julia set $J_R$ is
$\mathbb{T}$. Otherwise the Julia set $J_R$ is
a Cantor set on $\mathbb{T}$.
\end{prop}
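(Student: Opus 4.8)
The plan is to locate the Denjoy--Wolff point of $R$, read the four alternatives off the local dynamics there, and then compute $J_R$ by deciding whether $\mathbb{D}$ and its exterior lie in one Fatou component. Two preliminary observations: by the Blaschke symmetry $R(1/\overline z)=1/\overline{R(z)}$, the map $R$ commutes with the anti-holomorphic involution $\sigma(z)=1/\overline z$, which exchanges $\mathbb{D}$ and $\widehat{\mathbb{C}}\setminus\overline{\mathbb{D}}$ and fixes $\mathbb{T}$; since $\{R^{\circ m}|_{\mathbb{D}}\}$ is a normal family, $\mathbb{D}$ and $\sigma(\mathbb{D})=\widehat{\mathbb{C}}\setminus\overline{\mathbb{D}}$ both lie in $F_R$, so $J_R\subseteq\mathbb{T}$, and because $\widehat{\mathbb{C}}\setminus\mathbb{T}$ is open and dense, every Fatou component meets $\mathbb{D}$ or the exterior and hence coincides with the component $\Omega_0\supseteq\mathbb{D}$ or the component $\Omega_\infty\supseteq\widehat{\mathbb{C}}\setminus\overline{\mathbb{D}}$ --- in particular $R$ has at most two Fatou components. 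Next, since $\deg R\ge 2$ the map $R$ is not an automorphism of $\mathbb{D}$, so by the Denjoy--Wolff theorem there is a unique $\omega\in\overline{\mathbb{D}}$ with $R^{\circ m}\to\omega$ locally uniformly on $\mathbb{D}$. If $\omega\in\mathbb{D}$ we are in case (1). If $\omega\in\mathbb{T}$, then $R$ extends analytically across $\mathbb{T}$ and, being an orientation-preserving covering of $\mathbb{T}$, has $R'(\omega)>0$; the Julia--Wolff--Carath\'eodory theorem gives $R'(\omega)\le 1$. If $R'(\omega)<1$ then $\omega$ is an attracting fixed point of the rational map $R$: case (2). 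If $R'(\omega)=1$, conjugating $\omega$ to $0$ and applying the Leau--Fatou flower theorem (Theorem \ref{thm:petal}) produces $p$ attracting petals, and the next step shows $p\in\{1,2\}$, giving cases (4) and (3).

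The crux is the parabolic case. I would conjugate $R$ by a M\"obius map $C$ carrying $\mathbb{D}$ onto the upper half-plane $\mathbb{H}$ and $\omega$ to $\infty$, and set $\Phi=C\circ R\circ C^{-1}$; then $\Phi$ is a rational map of degree $n$ with real coefficients (it preserves $\mathbb{R}$), $\Phi(\mathbb{H})\subseteq\mathbb{H}$, $\Phi(\infty)=\infty$ and $\Phi(w)/w\to 1$. A short local argument at each pole, using $\Phi(\mathbb{H})\subseteq\mathbb{H}$, shows that the $n-1$ finite poles of $\Phi$ are simple with positive residue, so $\Phi(w)=w+c+\sum_{j=1}^{n-1}\frac{r_j}{p_j-w}$ with $p_j\in\mathbb{R}$, $r_j>0$ and $c\in\mathbb{R}$, whence
\[
  \Phi(w)=w+c-\frac{R_0}{w}+O(w^{-2}),\qquad R_0:=\sum_{j=1}^{n-1}r_j>0
\]
(here $R_0>0$ since $n-1\ge 1$). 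In the coordinate $z=1/w$, which sends $\omega$ to $0$, $\mathbb{T}$ to $\mathbb{R}$ and $\mathbb{D}$ to a half-plane, one computes $\widetilde\Phi(z)=z-cz^2+\cdots$. If $c\ne 0$ then $p=1$: this is case (4), and since the single attracting direction is $\pm 1$, tangent to $\mathbb{T}$, the petal straddles $\mathbb{T}$. If $c=0$ then $\widetilde\Phi(z)=z+R_0z^3+\cdots$ with $R_0>0$, so $p=2$ and the two attracting directions are $\pm i$, perpendicular to $\mathbb{T}$; hence (after shrinking) one petal lies in $\mathbb{D}$ and one in the exterior, while both boundary prongs of $\mathbb{T}$ at $\omega$ are repelled. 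This is case (3), and the four alternatives are now exhausted.

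It remains to compute $J_R$. In cases (1) and (3) one has $\Omega_0\ne\Omega_\infty$: in case (1) they are the immediate basins of the distinct attracting fixed points $\omega$ and $1/\overline\omega$; in case (3), were $\Omega_0=\Omega_\infty=:\Omega$, then $\Omega$ would be the unique Fatou component, $\Omega=\widehat{\mathbb{C}}\setminus J_R$ with $J_R\subsetneq\mathbb{T}$ (so $\Omega$ contains a closed sub-arc $I$ of $\mathbb{T}$), and $R^{\circ m}|_I\to\omega$ inside the invariant circle $\mathbb{T}$ would force $I$ to be eventually collapsed to $\{\omega\}$ because the prongs of $\mathbb{T}$ at $\omega$ are repelled --- impossible for a nonconstant map. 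So in cases (1) and (3) there are exactly two Fatou components; if some $\zeta\in\mathbb{T}$ lay in $F_R$, its (open) component would meet both $\mathbb{D}$ and the exterior and thus equal both $\Omega_0$ and $\Omega_\infty$, a contradiction, so $J_R=\mathbb{T}$. In cases (2) and (4) a genuine arc of $\mathbb{T}$ near $\omega$ lies in $F_R$ --- in case (2) because $\omega$ is an attracting fixed point of the rational map $R$, in case (4) because the straddling petal contains such an arc --- so $\Omega_0=\Omega_\infty$; then $J_R$ is a nonempty compact proper subset of $\mathbb{T}$, it is perfect (Julia sets of rational maps of degree $\ge 2$ have no isolated points) and has empty interior (since $F_R\ne\emptyset$), hence it is totally disconnected, i.e. a Cantor set.

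The main obstacle is the parabolic step: the assertions that $p\in\{1,2\}$ and that, when $p=2$, the petals are perpendicular to $\mathbb{T}$ (equivalently, that the boundary prongs at $\omega$ are repelled) both hinge on the positivity of the residues in the partial-fraction expansion of $\Phi$, which is exactly how the hypothesis that $R$ maps $\mathbb{D}$ properly into itself is used; without it one could a priori also have petals tangent to $\mathbb{T}$ or parabolic points of higher petal number. The remaining ingredients --- the Denjoy--Wolff and Julia--Wolff--Carath\'eodory theorems, the Leau--Fatou flower theorem, the ``at most two Fatou components'' observation via the Blaschke symmetry, and the standard facts that Julia sets of degree $\ge 2$ are perfect and (when $F_R\ne\emptyset$) have empty interior --- are classical.
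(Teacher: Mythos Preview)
The paper does not prove this proposition; it is quoted from Carleson--Gamelin as background, with remarks afterwards tying it to the Denjoy--Wolff theorem. Your argument is essentially the one underlying that reference and is largely sound, but the last step has a gap.

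In cases (2) and (4) you correctly obtain that $J_R$ is a nonempty perfect compact proper subset of $\mathbb{T}$, and then assert it ``has empty interior (since $F_R\ne\emptyset$), hence it is totally disconnected.'' The implication $F_R\ne\emptyset\Rightarrow J_R$ has empty interior refers to the interior in $\widehat{\mathbb{C}}$, which is automatic for any subset of $\mathbb{T}$ and does not preclude $J_R$ containing an arc of $\mathbb{T}$; an arc is certainly not totally disconnected. What you actually need is empty interior \emph{relative to} $\mathbb{T}$. One quick fix: if $J_R$ contained an open arc $I\subset\mathbb{T}$, choose $p\in I$ and a $\widehat{\mathbb{C}}$-neighbourhood $U$ of $p$ with $U\cap\mathbb{T}\subset I$; by the Montel blow-up property $\bigcup_{m} R^{\circ m}(U)$ omits at most two points, so for all but at most two $\zeta\in\mathbb{T}$ there exist $m$ and $u\in U$ with $R^{\circ m}(u)=\zeta$, and total invariance of $\mathbb{T}$ under $R$ forces $u\in U\cap\mathbb{T}\subset I\subset J_R$, whence $\zeta\in J_R$ and thus $J_R=\mathbb{T}$, a contradiction. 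Equivalently, you may invoke the standard fact that when all critical points lie in a single fully invariant Fatou component the Julia set is totally disconnected. A smaller imprecision: in case (3), ``$I$ eventually collapsed to $\{\omega\}$'' should be replaced by ``every point of $I$ is an iterated preimage of $\omega$'' (since orbits converging to a parabolic point do so along attracting directions, which here are perpendicular to $\mathbb{T}$), and the contradiction is then that $\bigcup_m (R^{\circ m})^{-1}(\omega)$ is countable while the arc $I$ is not.
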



The above proposition is related to the Denjoy-Wolff Theorem. 

\begin{thm}[Denjoy-Wolff theorem {\cite[Theorem 2.51]{CM}}]
Let $\varphi: \mathbb{D} \to \mathbb{D}$ be analytic, and assume
$\varphi$ is not an elliptic M\"{o}bius transformation nor the identity.
Then there exists $w_0 \in \overline{\mathbb{D}}$ such that
$\{\varphi ^{\circ m}\}_{m=1} ^{\infty}$ converges
to $w_0$ uniformaly on compact subsets of $\mathbb{D}$.
\end{thm}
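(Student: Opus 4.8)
The plan is to reduce to the genuine content by first clearing away the easy cases and then treating the two essentially different situations. If $\varphi$ is constant, the iterates are eventually constant and there is nothing to prove; if $\varphi$ is a M\"{o}bius automorphism then, since by hypothesis it is neither elliptic nor the identity, it is parabolic or hyperbolic, and the classical classification of automorphisms of $\mathbb{D}$ shows that its iterates converge to the attracting boundary fixed point. So assume from now on that $\varphi$ is a non-constant analytic self-map of $\mathbb{D}$ that is not an automorphism; then $|\varphi(z)|<1$ for all $z\in\mathbb{D}$ and, by the equality case of the Schwarz--Pick lemma, $\varphi$ strictly decreases the hyperbolic distance $d_{\mathbb{D}}$ between distinct points. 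If $\varphi$ has a fixed point $p\in\mathbb{D}$, Schwarz--Pick forces $|\varphi'(p)|<1$, so $p$ is attracting and $\varphi^{\circ m}\to p$ uniformly near $p$; since $\{\varphi^{\circ m}\}$ is a normal family, any locally uniform subsequential limit coincides with the constant $p$ near $p$, hence everywhere by the identity theorem, and so $w_0=p$ works.

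The remaining case is that $\varphi$ has no fixed point in $\mathbb{D}$. Following Wolff, for $0<r<1$ the map $\varphi_r:=r\varphi$ sends $\overline{\mathbb{D}}$ into $\mathbb{D}$ and therefore has a unique fixed point $p_r\in\mathbb{D}$; choosing $r_n\uparrow1$ with $p_{r_n}\to w_0\in\overline{\mathbb{D}}$, the relation $r_n\varphi(p_{r_n})=p_{r_n}$ passes to the limit and shows $w_0$ cannot be an interior fixed point of $\varphi$, so $w_0\in\mathbb{T}$. The crucial step is \emph{Wolff's lemma}: $\varphi$ maps every horodisk $E(w_0,\rho):=\{\,z\in\mathbb{D}:|w_0-z|^{2}<\rho(1-|z|^{2})\,\}$ ($\rho>0$) into itself. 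To obtain it, rewrite the Schwarz--Pick inequality for $\varphi_{r_n}$ based at its fixed point $p_{r_n}$ in the form
\[
  \frac{|1-\overline{p_{r_n}}\,\varphi_{r_n}(z)|^{2}}{1-|\varphi_{r_n}(z)|^{2}}\;\le\;\frac{|1-\overline{p_{r_n}}\,z|^{2}}{1-|z|^{2}},
\]
and let $n\to\infty$: since $p_{r_n}\to w_0\in\mathbb{T}$, $\varphi_{r_n}(z)\to\varphi(z)$, $|\varphi(z)|<1$, and $|1-\overline{w_0}\,\zeta|=|w_0-\zeta|$ when $|w_0|=1$, the limit is precisely $|w_0-\varphi(z)|^{2}(1-|z|^{2})\le|w_0-z|^{2}(1-|\varphi(z)|^{2})$, i.e. the asserted invariance.

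It remains to deduce convergence. Fix $z\in\mathbb{D}$ and set $\rho_z:=|w_0-z|^{2}/(1-|z|^{2})$; by Wolff's lemma the whole orbit $\{\varphi^{\circ m}(z)\}_m$ lies in $E(w_0,\rho_z)$, whose closure meets $\mathbb{T}$ only at $w_0$. Let $g$ be any locally uniform subsequential limit of the normal family $\{\varphi^{\circ m}\}$, say $\varphi^{\circ n_k}\to g$. If $g$ were non-constant, then $g(\mathbb{D})\subset\mathbb{D}$ by the open mapping theorem and the maximum principle; refining so that the gap iterates $\varphi^{\circ(n_{k+1}-n_k)}$ converge locally uniformly to some $h$ and passing to the limit in $\varphi^{\circ n_{k+1}}=\varphi^{\circ(n_{k+1}-n_k)}\circ\varphi^{\circ n_k}$ yields $g=h\circ g$, so $h=\mathrm{id}$ on the open set $g(\mathbb{D})$ and hence on all of $\mathbb{D}$; but then an iterate of $\varphi$ would preserve the hyperbolic distance between two distinct points, forcing $\varphi$ to be an automorphism, contrary to our standing assumption. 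Therefore $g$ is a constant $c$, and since $c\in\overline{E(w_0,\rho_z)}$ while $c$ cannot be an interior point (otherwise $\varphi(c)=c$), we get $c=w_0$. All subsequential limits being $w_0$, the sequence $\{\varphi^{\circ m}\}$ converges to $w_0$ uniformly on compact subsets of $\mathbb{D}$. I expect the main obstacle to be Wolff's lemma — pinning down the auxiliary fixed points $p_r$ as $r\uparrow1$ and executing the limit cleanly — with a secondary difficulty in the normal-family endgame, where the non-automorphism hypothesis is exactly what excludes non-constant subsequential limits of the iterates.
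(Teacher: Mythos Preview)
The paper does not prove this theorem at all: it is quoted from Cowen--MacCluer \cite[Theorem 2.51]{CM} as background, with no argument given. So there is nothing to compare your approach against; what you have written is essentially the classical Wolff--Denjoy proof, and it is correct in outline.

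Two small points are worth tightening. First, the sentence ``$\varphi_r:=r\varphi$ sends $\overline{\mathbb{D}}$ into $\mathbb{D}$'' is not literally true, since $\varphi$ is only defined on $\mathbb{D}$; what you want is that $\varphi_r(\mathbb{D})\subset\{|z|\le r\}$, so $\varphi_r$ maps the compact disk $\{|z|\le r\}$ continuously into itself and Brouwer (or Rouch\'e on a slightly larger circle) yields the fixed point $p_r$. Second, the parenthetical ``otherwise $\varphi(c)=c$'' deserves one line of justification: if the constant limit $c$ were in $\mathbb{D}$, then computing $\varphi^{\circ(n_k+1)}(z)$ two ways gives $\varphi(c)=\lim\varphi\bigl(\varphi^{\circ n_k}(z)\bigr)=\lim\varphi^{\circ n_k}\bigl(\varphi(z)\bigr)=c$, contradicting the absence of an interior fixed point. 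With these clarifications the argument is complete.
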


The limit point of the Denjoy-wolff theorem will be referred to as
the {\it Denjoy-Wolff point} of $\varphi$.

There are some remarks about Proposition \ref{prop:classification}.
Let $R$ be a finite Blaschke product of degree at least two.
It is known that
there exists exactly one fixed point
$w_0 \in \overline{\mathbb{D}}$ of $R$ such that $| R'(w_0) | \leq 1$.
Moreover $w_0$ is the Denjoy-Wolff point of $R$
(see for example \cite[p. 59]{CM}).
Thus the fixed point in Proposition \ref{prop:classification}
is the Denjoy-Wolff point $w_0$ of $R$.
If $w_0 \in \mathbb{D}$, then $w_0$ is automatically
attracting by the Schwarz lemma.
Otherwise, it is known that $0 < R'(w_0) \leq 1$ (see for example
\cite[p. 55 and Theorem 2.48]{CM}).
If $w_0$ is a parabolic fixed point on $\mathbb{T}$, then
$R'(w_0) =1$. Moreover, if $R''(w_0) = 0$,
then $R$ belongs to (3) by Theorem \ref{thm:petal}.
Otherwise, $R$ belongs to (4).


\section{Representations induced by
Toeplitz operators and composition operators}

Let $R$ be a finite Blaschke product.
We shall construct a Hilbert bimodule $X_R$ and its covariant representation
induced by the Toeplitz operator $T_z$ and the composition operator $C_R$.
In Section \ref{sect:main},
we shall prove the $C^*$-algebra $\mathcal{OC}_R$ is
isomorphic to the Cuntz-Pimsner algebra $\mathcal{O}_{X_R}$
associated to the Hilbert bimodule $X_R$
when $R$ is a M\"{o}bius transformation of infinite order which maps $\mathbb{D}$ to itself 
or a finite Blaschke product of degree at least two.

Let $R$ be a finite Blaschke product and $A=X_R=C(\mathbb{T})$.
Then $X_R$ is an $A$-$A$ bimodule over $A$ by
\[
(a \cdot \xi \cdot b)(z) = a(z)\xi(z)b(R(z)), \quad a, b \in A, \ \xi \in X_R.
\]
We define an $A$-valued sesquilinear form
$\langle \, ,\, \rangle_A : X_R \times X_R \to A$ by
\[
  \langle \xi, \eta \rangle _A  = A_R (\overline{\xi} \eta), \quad
  \xi, \eta \in X_R.
\]
Since the Aleksandrov operator $A_R$ is a faithful positive map
on $A$, the sesquilinear form
$\langle \, ,\, \rangle_A$ is an $A$-valued inner product on $X_R$.
Put $\| \xi \|_2 = \| \langle \xi, \xi \rangle_A \|^{1/2}$
for $\xi \in X_R$.

\begin{prop}
Let $R$ be a finite Blaschke product. Then $X_R$ is a full Hilbert bimodule
over $A$ without completion. The left action $\phi: A \to \mathcal{L}(X_R)$
is unital and faithful.
\end{prop}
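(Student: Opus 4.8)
The plan is to verify the axioms of a Hilbert bimodule in the order: (i) the $A$-valued inner product is well-defined and has the right algebraic properties, (ii) $X_R$ is already complete in the norm $\|\cdot\|_2$, so no completion is needed, (iii) the module is full, and (iv) the left action $\phi$ is unital and faithful. The essential input is that $A_R$ is a faithful positive map on $A = C(\mathbb{T})$, which has already been used to assert that $\langle\,,\,\rangle_A$ is an inner product, together with the explicit formula $\mu_\alpha = \sum_{\zeta\in R^{-1}(\alpha)} \frac{1}{|R'(\zeta)|}\delta_\zeta$ recorded in the Preliminaries.

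First I would record that $A_R$ maps $C(\mathbb{T})$ into $C(\mathbb{T})$ (stated earlier), so $\langle\xi,\eta\rangle_A = A_R(\overline{\xi}\eta)$ genuinely lands in $A$; linearity in $\eta$, conjugate symmetry, the relation $\langle\xi,\eta\cdot b\rangle_A = \langle\xi,\eta\rangle_A\, b$, and $\langle a\cdot\xi, \eta\rangle_A = \overline{a}$-twisted identities follow from the pointwise description $(\xi\cdot b)(z) = \xi(z) b(R(z))$ together with the fact that $A_R$ is $C(\mathbb{T})$-linear over the subalgebra $\{b\circ R : b\in C(\mathbb{T})\}$ — this last point is immediate from $A_R(\overline{\xi}\,\eta\,(b\circ R))(\alpha) = \int \overline{\xi}\eta\, (b\circ R)\, d\mu_\alpha = b(\alpha)\int\overline{\xi}\eta\, d\mu_\alpha$ since $\mu_\alpha$ is supported on $R^{-1}(\alpha)$. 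Positivity and faithfulness of $A_R$ give $\langle\xi,\xi\rangle_A\ge 0$ with equality only if $\xi = 0$.

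For completeness, the key observation is that the norm $\|\xi\|_2 = \|A_R(|\xi|^2)\|_\infty^{1/2}$ is equivalent to the uniform norm $\|\xi\|_\infty$ on $C(\mathbb{T})$. One inequality, $\|\xi\|_2 \le \|\xi\|_\infty$, follows since $A_R$ is positive and unital ($A_R(1) = 1$ because each $\mu_\alpha$ is a probability measure: $\int 1\, d\mu_\alpha = \sum_{\zeta\in R^{-1}(\alpha)}\frac{1}{|R'(\zeta)|} = 1$ by the Herglotz identity at $z=0$, or by the formula $|R'(\zeta)| = \sum_k \frac{1-|z_k|^2}{|\zeta-z_k|^2}$). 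The reverse inequality is the main obstacle: I expect to need a lower bound $A_R(|\xi|^2)(\alpha) \ge c\,|\xi(\zeta)|^2$ for each $\zeta\in R^{-1}(\alpha)$, which holds with $c = \min_{\zeta\in\mathbb{T}} \frac{1}{|R'(\zeta)|} > 0$ — finite and positive because $|R'|$ is continuous and nonvanishing on the compact set $\mathbb{T}$. Taking the supremum over $\zeta$ and then over $\alpha$ (using that $R$ is onto $\mathbb{T}$) yields $\|\xi\|_\infty^2 \le c^{-1}\|\xi\|_2^2$. Since $C(\mathbb{T})$ is complete in $\|\cdot\|_\infty$, it is complete in $\|\cdot\|_2$, so no completion is required.

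Fullness means the closed ideal generated by $\{\langle\xi,\eta\rangle_A\}$ is all of $A$; but taking $\xi = \eta = 1$ gives $\langle 1,1\rangle_A = A_R(1) = 1$, so the ideal is everything. Finally, $\phi: A\to\mathcal{L}(X_R)$ acts by $(\phi(a)\xi)(z) = a(z)\xi(z)$, i.e. pointwise multiplication; it is clearly unital, and faithful because $\phi(a) = 0$ forces $a\xi = 0$ for all $\xi\in C(\mathbb{T})$, in particular $a = a\cdot 1 = 0$. One should also check $\phi(a)$ is adjointable with $\phi(a)^* = \phi(\overline{a})$, which is immediate from $A_R(\overline{a\xi}\,\eta) = A_R(\overline{\xi}\,\overline{a}\eta)$. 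This completes the verification.
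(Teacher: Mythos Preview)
Your approach is essentially the paper's: show that $\|\cdot\|_2$ and the sup norm are equivalent on $C(\mathbb{T})$, deduce completeness, then check fullness and faithfulness directly. The paper does exactly this, bounding $A_R(1)$ above by $nM$ with $M=\max_{\zeta\in\mathbb{T}}\frac{1}{|R'(\zeta)|}$ and, for fullness, noting only that $A_R(1)$ is a positive invertible element.

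There is, however, one factual slip in your write-up. You assert $A_R(1)=1$, i.e.\ that each $\mu_\alpha$ is a probability measure. This is false for a general finite Blaschke product: evaluating the Herglotz identity at $z=0$ gives
\[
\mu_\alpha(\mathbb{T})=\int_{\mathbb{T}} 1\,d\mu_\alpha=\frac{1-|R(0)|^2}{|\alpha-R(0)|^2},
\]
which equals $1$ for all $\alpha\in\mathbb{T}$ only when $R(0)=0$. (The alternative appeal to the formula $|R'(\zeta)|=\sum_k\frac{1-|z_k|^2}{|\zeta-z_k|^2}$ does not yield the claimed identity either.) The repair is immediate and brings you in line with the paper: $A_R(1)$ is a strictly positive continuous function on $\mathbb{T}$, so it is bounded above, giving $\|\xi\|_2\le \|A_R(1)\|_\infty^{1/2}\,\|\xi\|_\infty$, and it is invertible in $A$, so $\langle 1,1\rangle_A=A_R(1)$ already generates $A$ as an ideal, giving fullness. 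Your lower bound $\|\xi\|_2^2\ge c\,\|\xi\|_\infty^2$ with $c=\min_{\zeta\in\mathbb{T}}\frac{1}{|R'(\zeta)|}>0$ is correct and is the same mechanism the paper uses. With this one correction the argument goes through.
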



\begin{proof}
Since $R'$ is continuous on $\mathbb{T}$, there exists $M > 0$ such that
$\frac{1}{|R'(z)|} \leq M$ for $z \in \mathbb{T}$.
For $\xi \in X_R$, we have
\[
   \| \xi \| \leq \| \xi \|_2
   = \left ( \sup_{w \in \mathbb{T}} \sum_{z \in R^{-1}(w)}
   \frac{1}{|R'(z)|} |\xi(z)|^2 \right )^{1/2}
   \leq \sqrt{nM} \| \xi \|.
\]
Thus the two norms $\| \ \|_2$ and $\| \ \|$ are equivalent.
Since $X_R$ is complete with respect to $\| \ \|$, it is also
complete with respect to $\| \ \|_2$.
Since $A_R(1)$ is a positive invertible element in $A$,
$\langle X_R, X_R \rangle_A$
contains the identity of $A$. Thus $X_R$ is full.
It is clear that $\phi$ is faithful.
\end{proof}

Let $R$ be a finite Blaschke product,
that is,
\[
 R(z) = \lambda \prod_{k=1} ^n \frac{z-z_k}{1-\overline{z_k}z},
      \quad z \in \widehat{\mathbb{C}},
\]
where $n \in \mathbb{N}, \, \lambda \in \mathbb{T}$ and $z_1,\dots, z_n \in \mathbb{D}$.
We conisder the following functions
\[
  u_1 (z) = \frac{\sqrt{1-|z_1|^2}}{1-\overline{z_1}z}, \quad
  u_i (z) = \frac{\sqrt{1-|z_i|^2}}{1-\overline{z_i}z}
  \prod_{k=1} ^{i-1} \frac{z - z_k}{1-\overline{z_k}z}, \quad z \in \mathbb{T}
\]
for $i=2, \dots, n$.
It is known that $\{u_i \}_{i=1} ^n$ is an
orthonormal basis of $\mathcal{D}:= H^2 \ominus T_R H^2$ and
$\{u_i R^k \, | \, i= 1, \dots ,n, \ k= 0, 1, \dots \}$ is an
orthonormal basis of $H^2$, where
$R^k$ is the $k$-th power of $R$ with respect to pointwise
multiplication \cite{M, T} (see also \cite[Lemma 3.1, Remark 3.7]{CMS}).
The basis $\{u_i R^k \, | \, i= 1, \dots ,n, \ k= 0, 1, \dots \}$ is called the
{\it Takenaka-Malmquist basis} of $H^2$.


\begin{lem} \label{lem:Cuntz relation}
The notations be as above. Then
\[
  (T_{u_i} C_R)^* T_{u_j} C_R = \delta_{ij} I \quad \text{and} \quad
  \sum_{i=1} ^n T_{u_i} C_R (T_{u_i} C_R)^* = I.
\]
\end{lem}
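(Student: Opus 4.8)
The plan is to verify the two identities by computing adjoints explicitly and then invoking the covariant relation of Theorem \ref{thm:cp_map}. For the first identity, note that $(T_{u_i} C_R)^* T_{u_j} C_R = C_R^* T_{u_i}^* T_{u_j} C_R = C_R^* T_{\overline{u_i}} T_{u_j} C_R$. The obstacle here is that $T_{\overline{u_i}} T_{u_j}$ is not itself a Toeplitz operator in general (the product of Toeplitz operators with an analytic and an anti-analytic symbol is, however, well-behaved: $T_{\overline{u_i}} T_{u_j} = T_{\overline{u_i} u_j}$ when $u_i$ is analytic, since then $T_{\overline{u_i}} = T_{u_i}^*$ and $T_{u_i}^* T_{u_j} = P_{H^2} M_{\overline{u_i}} M_{u_j} |_{H^2} = T_{\overline{u_i} u_j}$ because $M_{u_j}$ maps $H^2$ into $H^2$ and $P_{H^2} M_{\overline{u_i}} = P_{H^2} M_{\overline{u_i}} P_{H^2}$ on $H^2$). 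So $C_R^* T_{\overline{u_i}} T_{u_j} C_R = C_R^* T_{\overline{u_i} u_j} C_R = T_{A_R(\overline{u_i} u_j)}$ by Theorem \ref{thm:cp_map}. Now I would use the fact that $\{u_i\}_{i=1}^n$ is an orthonormal basis of $\mathcal{D} = H^2 \ominus T_R H^2$, which gives $\langle u_i, u_j \rangle_{H^2} = \delta_{ij}$; more precisely, I would show $A_R(\overline{u_i} u_j) = \delta_{ij} 1$ as a function on $\mathbb{T}$, using the explicit formula $A_R(a)(\alpha) = \sum_{\zeta \in R^{-1}(\alpha)} \frac{1}{|R'(\zeta)|} a(\zeta)$ together with the known orthogonality relations for the Takenaka-Malmquist functions evaluated fibrewise over each point $\alpha \in \mathbb{T}$. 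This yields $(T_{u_i} C_R)^* T_{u_j} C_R = T_{\delta_{ij} 1} = \delta_{ij} I$.

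For the second identity, I would rewrite $\sum_{i=1}^n T_{u_i} C_R (T_{u_i} C_R)^* = \sum_{i=1}^n T_{u_i} C_R C_R^* T_{\overline{u_i}}$. The key input is an understanding of $C_R C_R^*$, or rather of how $C_R C_R^*$ interacts with the $T_{u_i}$. A cleaner route is to test the operator $E := \sum_{i=1}^n T_{u_i} C_R (T_{u_i} C_R)^*$ against the Takenaka-Malmquist basis $\{u_j R^k\}$ of $H^2$. Since $C_R(u_j R^k) = (u_j \circ R)(R \circ R)^k$ and $C_R$ is an isometry on the range of each $T_{u_j}$ composed appropriately — actually the simplest approach: show directly that $\{T_{u_i} C_R\}_{i=1}^n$ is a family of isometries with orthogonal ranges (the first identity gives exactly this: $V_i := T_{u_i} C_R$ satisfies $V_i^* V_j = \delta_{ij} I$), and then show the ranges span all of $H^2$. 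The ranges sum to $H^2$ precisely because $C_R$ has closed range with $C_R C_R^*$ relating to multiplication, but more concretely: $\operatorname{ran}(T_{u_i} C_R) = T_{u_i} \overline{\operatorname{ran} C_R}$, and one checks $\bigoplus_{i=1}^n u_i \cdot H^2 = H^2$ (this is the statement that $\{u_i R^k\}$ is an orthonormal basis, since $u_i H^2 = u_i \overline{\operatorname{span}}\{R^k\} \oplus (\text{rest})$ — care is needed as $\operatorname{ran} C_R = \overline{\operatorname{span}}\{g \circ R : g \in H^2\}$). I expect the efficient argument is: Lemma \ref{lem:rep}-style reasoning shows $\sum V_i V_i^*$ is a projection; compute its trace-like action on basis vectors, or show its range contains each $u_j R^k$ by writing $u_j R^k = V_j (R^k)$ after verifying $C_R(f) = ?$... — the point being $C_R$ restricted to $H^2$ and then multiplied by $u_j$ recovers the full basis.

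The main obstacle I anticipate is the second identity, specifically verifying that the orthogonal ranges of the isometries $T_{u_i} C_R$ exhaust $H^2$; this requires knowing the precise description of $\operatorname{ran} C_R$ for a finite Blaschke product and fitting it together with the $u_i$. I would resolve this by computing $T_{u_i} C_R$ on a spanning set: for $f \in H^2$ one has $(C_R f)(z) = f(R(z))$, so $T_{u_i} C_R$ maps $R^k \mapsto u_i R^{k}$ ... wait, $C_R(R^k)$ is not $R^k$; rather $C_R$ sends the constant $1$ to $1$ and sends $z^k$ to $R^k$, so $C_R H^2 = \overline{\operatorname{span}}\{R^k : k \geq 0\}$, and therefore $T_{u_i} C_R H^2 = \overline{\operatorname{span}}\{u_i R^k : k \geq 0\}$. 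Summing over $i$ and using that $\{u_i R^k\}_{i,k}$ is an orthonormal basis of $H^2$ gives $\sum_i \operatorname{ran}(T_{u_i} C_R) = H^2$, hence $\sum_i T_{u_i} C_R (T_{u_i} C_R)^* = I$. This, combined with the first identity, completes the proof; the first identity itself reduces to the fibrewise orthogonality computation $A_R(\overline{u_i} u_j) = \delta_{ij}$, which I would extract from the standard properties of the Takenaka-Malmquist system.
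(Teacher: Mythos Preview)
Your argument for the second identity --- computing $T_{u_i} C_R\, e_k = u_i R^k$ (since $C_R e_k = R^k$ and $u_i \in H^\infty$) and then using that $\{u_i R^k : 1\le i\le n,\ k\ge 0\}$ is an orthonormal basis of $H^2$ --- is exactly the paper's proof. The paper observes that this single computation already gives \emph{both} identities: each $T_{u_i}C_R$ sends the orthonormal basis $\{e_k\}$ to the orthonormal system $\{u_i R^k\}_k$, so it is an isometry, and for $i\ne j$ the ranges are orthogonal subspaces of $H^2$, which yields $(T_{u_i}C_R)^*(T_{u_j}C_R)=\delta_{ij}I$ immediately.

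Your separate route to the first identity via Theorem~\ref{thm:cp_map} is therefore an unnecessary detour, and it contains a genuine gap. You reduce to showing $A_R(\overline{u_i}u_j)=\delta_{ij}$ pointwise on $\mathbb{T}$ and then appeal to ``known orthogonality relations for the Takenaka--Malmquist functions evaluated fibrewise.'' No such fibrewise identity is standard; in the paper's logical structure this equality is a \emph{consequence} of Lemma~\ref{lem:Cuntz relation}, established afterward in Proposition~\ref{prop:relation} (where it is read off from $V_{u_i}^*V_{u_j}=\rho(\langle u_i,u_j\rangle_A)=\delta_{ij}I$). Citing it here would be circular. One can prove $A_R(\overline{u_i}u_j)=\delta_{ij}$ independently --- e.g.\ by computing all Fourier coefficients $\langle A_R(\overline{u_i}u_j),e_k\rangle_{L^2}=\langle u_j,u_iR^k\rangle_{H^2}$ using that $A_R$ is adjoint to composition with $R$ on $L^2(\mathbb{T})$ --- but this again rests on the Takenaka--Malmquist basis property and is strictly more work than the direct argument you already wrote down for the second identity.
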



\begin{proof}
Set $e_k(z) = z^k$. We have
$T_{u_i} C_R e_k = u_i R^{k}$
for $i = 1, \dots ,n$ and $k \geq 0$.
Thus $T_{u_i} C_R$ is an isometry
and $\bigoplus_{i = 1} ^n \im (T_{u_i} C_R) = H^2$.
\end{proof}


\begin{prop} \label{prop:relation}
For $a \in A$ and $\xi \in X_R$,
we define $\rho(a) = \pi(T_a)$ and $V_\xi = \pi(T_{\xi} C_R)$,
where $\pi$ is the canonical quotient map $\mathcal{TC}_R$ to
$\mathcal{OC}_R$.
Then $(\rho, V)$ is a covariant representation of $X_R$ on $\mathcal{OC}_R$
and $\rho$ is unital and injective.
Moreover, $\{u_i \}_{i=1} ^n$ is a finite orthonormal basis of $X_R$.
\end{prop}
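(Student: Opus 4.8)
The plan is to verify the defining relations of a covariant representation directly and then invoke Lemma~\ref{lem:rep}. First I would check the module relation $\rho(a)V_\xi = V_{\phi(a)\xi}$: since $\pi$ is a homomorphism and $T_a T_\xi = T_{a\xi}$ for $a \in A = C(\mathbb{T})$ (as $a$ is continuous, hence both an analytic-free multiplier on the Toeplitz level modulo compacts — more precisely, $T_aT_\xi - T_{a\xi}$ is compact), we get $\rho(a)V_\xi = \pi(T_aT_\xi C_R) = \pi(T_{a\xi}C_R) = V_{a\xi}$, and $a\xi$ is exactly $\phi(a)\xi$ by the definition of the left action. Next, the inner-product relation $V_\xi^* V_\eta = \rho(\langle \xi,\eta\rangle_A)$: compute $V_\xi^*V_\eta = \pi\big((T_\xi C_R)^*(T_\eta C_R)\big) = \pi\big(C_R^* T_{\overline\xi}T_\eta C_R\big)$, and modulo compacts $T_{\overline\xi}T_\eta = T_{\overline\xi\eta}$, so this equals $\pi\big(C_R^* T_{\overline\xi\eta}C_R\big)$. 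By Theorem~\ref{thm:cp_map} applied to the inner function $R$, $C_R^* T_{\overline\xi\eta}C_R = T_{A_R(\overline\xi\eta)}$, which is exactly $T_{\langle\xi,\eta\rangle_A}$; applying $\pi$ gives $\rho(\langle\xi,\eta\rangle_A)$.

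For the remaining hypotheses of Lemma~\ref{lem:rep}, the relation $\sum_{i=1}^n V_{u_i}V_{u_i}^* = 1$ is the image under $\pi$ of the second identity in Lemma~\ref{lem:Cuntz relation}, namely $\sum_{i=1}^n T_{u_i}C_R(T_{u_i}C_R)^* = I$; note each $u_i$ lies in $H^\infty(\mathbb{T}) \subset C(\mathbb{T}) = X_R$, so this is a legitimate statement about the $V_{u_i}$. Similarly $V_{u_i}^*V_{u_j} = \delta_{ij}1$ is the image under $\pi$ of the first identity $(T_{u_i}C_R)^*T_{u_j}C_R = \delta_{ij}I$ in Lemma~\ref{lem:Cuntz relation}. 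Finally, $\rho$ is unital because $T_1 = I$, and $\rho$ is injective because for $a \in C(\mathbb{T})$ with $\pi(T_a) = 0$ we have $T_a$ compact, forcing the essential norm $\|a\|_\infty = 0$; this is the standard fact that $a \mapsto \pi(T_a)$ is a faithful embedding of $C(\mathbb{T})$ into the Calkin algebra (equivalently, the Toeplitz extension $0 \to \mathcal{K} \to \mathcal{T} \to C(\mathbb{T}) \to 0$).

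With all hypotheses of Lemma~\ref{lem:rep} in place — $\rho$ unital and injective, $V_\xi\rho(a) = V_{\xi a}$ (which follows from the relations already checked, since $V_\xi\rho(a) = \pi(T_\xi C_R T_a)$ and $C_R T_a = T_{a\circ R}C_R = T_{\alpha(a)}C_R$, hence $T_\xi C_R T_a = T_{\xi\cdot\alpha(a)}C_R$ modulo compacts, and $\xi\cdot\alpha(a)$ is precisely $\xi a$ in the bimodule by the definition $(\xi\cdot a)(z) = \xi(z)a(R(z))$), together with $V_\xi^*V_\eta = \rho(\langle\xi,\eta\rangle_A)$, $\sum V_{u_i}V_{u_i}^* = 1$, and $V_{u_i}^*V_{u_j} = \delta_{ij}1$ — the lemma yields that $V$ is injective, that $\{u_i\}_{i=1}^n$ is a finite basis of $X_R$, and that it is orthonormal. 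Covariance of $(\rho,V)$ then follows from the criterion recalled in the preliminaries: since $\phi$ and $\rho$ are unital and $X_R$ has the finite basis $\{u_i\}$, covariance is equivalent to $\sum_{i=1}^n V_{u_i}V_{u_i}^* = 1$, which we have established.

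The main obstacle I anticipate is bookkeeping the "modulo compacts'' identities cleanly — specifically justifying $T_aT_b \equiv T_{ab}$, $C_R T_a = T_{\alpha(a)}C_R$, and the interplay of these with Theorem~\ref{thm:cp_map} — and making sure the bimodule operations $\xi\cdot a$ and $a\cdot\xi$ are matched to the operator-level computations with the correct composition by $R$ on the correct side; once that dictionary is fixed, each relation is a short computation and the structural conclusions are immediate from Lemma~\ref{lem:rep} and the covariance criterion.
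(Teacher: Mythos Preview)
Your proof is correct and follows the same route as the paper: check the left-module relation (via $T_aT_\xi\equiv T_{a\xi}$ mod $\mathcal{K}$), the inner-product relation (via Theorem~\ref{thm:cp_map}), the Cuntz relations (via Lemma~\ref{lem:Cuntz relation}), injectivity of $\rho$, and then invoke Lemma~\ref{lem:rep} together with the covariance criterion. One remark: the identity $C_RT_a=T_{a\circ R}C_R$ you use for the right-module relation holds only modulo compacts when $R(0)\neq0$ and $a\notin H^\infty$, but this is harmless since you are already working in the quotient; the paper sidesteps this step entirely by relying on the general fact (recalled in the preliminaries) that $V_\xi\rho(a)=V_{\xi a}$ is automatic once the other two representation relations are in place.
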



\begin{proof}
Let $a \in A$ and $\xi, \eta \in X_R$.
By definition, we have
\[
  \rho(a) V_{\xi} = V_{a \cdot \xi}.
\]
By Theorem \ref{thm:cp_map}, it follows that
\[
  V_{\xi} ^* V_{\eta} = \pi(C_R ^* T_{\overline{\xi} \eta} C_R)
  = \pi(T_{A_R(\overline{\xi} \eta)}) = \rho(\langle \xi, \eta \rangle_A).
\]
Lemma \ref{lem:Cuntz relation} implies
\[
  V_{u_i} ^* V_{u_j} = \pi ((T_{u_i} C_R)^* T_{u_j} C_R) = \delta_{ij} I
\]
for $i,j = 1, \dots, n$ and
\[
  \sum_{i=1} ^n V_{u_i} V_{u_i} ^*
  = \sum_{i=1} ^n \pi (T_{u_i} C_R (T_{u_i} C_R)^*) = I.
\]
It is known that $\rho$ is injective.
The rest is now clear from Lemma \ref{lem:rep}.
\end{proof}


\begin{rem}
Let $R$ be a finite Blaschke product of degree $n$.
We can also show the same statement
for any orthonormal basis $\{v_i \}_{i=1} ^n$ of $\mathcal{D}
= H^2 \ominus T_R H^2$ as above, since
$\{v_i R^k \, | \, i= 1, \dots ,n, \ k= 0, 1, \dots \}$ is an
orthonormal basis of $H^2$ and $v_i \in H^{\infty}$ for $i=1, \dots , n$.
Moreover we may
use $\{v_i \}_{i=1} ^n$ instead of
$\{u_i \}_{i=1} ^n$ from now on, since
$v_i \in C(\mathbb{T})$ for $i=1, \dots , n$.
\end{rem}


\section{Uniqueness}
Let $R$ be a M\"{o}bius transformation of infinite order which maps $\mathbb{D}$ to itself 
or a finite Blaschke product of degree at least two.
We shall show $\mathcal{OC}_R$ is isomorphic to the
Cuntz-Pimsner algebra $\mathcal{O}_{X_R}$.
In this section we consider a more general setting.
Let $A=C(\mathbb{T})$ and $D$ be a unital $C^*$-algebra.
Suppose $(\rho, V)$ is a covariant representation of $X_R$ on $D$
such that $\rho$ is unital and injective.
We denote by $B$  the  $C^*$-algebra generated by
$\{ \rho(a), V_\xi \, | \, a \in A, \xi \in X_R \}$.
We shall show $B$ is isomorphic to $\mathcal{O}_{X_R}$.
This implies that
the Cuntz-Pimsner algebra $\mathcal{O}_{X_R}$ is uniquely determined
by covariant relations.
There have been many studies on uniqueness theorems
by many authors (see for example \cite{KPW,K3}).
In this paper we give a self-contained proof in this case
using the finite orthonormal basis $\{u_i\}_{i=1} ^n$ of $X_R$.
Put $\alpha (a) = a \circ R$ for $a \in A$.


\begin{lem} \label{lem:matrix}
Let $R$ be a M\"{o}bius transformation of infinite order which maps $\mathbb{D}$ to itself 
or a finite Blaschke product of degree at least two.
For any $N \in \mathbb{N}, a \in M_N(C(\mathbb{T})), \varepsilon > 0$
and $m \in \mathbb{N}$,
there exists $c \in A = C(\mathbb{T})$ satisfying the following
\begin{enumerate}
\item $0 \leq c \leq 1$,
\item $c \alpha ^j (c) = 0$ for $j= 1, \dots, m,$
\item $\| a \, \diag(c^2) \| \geq \| a \| - \varepsilon$ in $M_N(C(\mathbb{T}))$,
where $\diag(T)$ is the diagonal matrix whose diagonal elements are all equal
to T.
\end{enumerate}
\end{lem}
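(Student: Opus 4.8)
The statement is a standard "almost orthogonality/Rokhlin-type" lemma for the endomorphism $\alpha(a)=a\circ R$. The plan is to produce a function $c$ supported on a small arc of $\mathbb{T}$ on which $a$ (entrywise, as an $M_N$-valued function) nearly attains its norm, and so small that its first $m$ forward images under $R$ avoid the support of $c$. The key geometric fact driving everything is that $R$ is an inner function of degree $n\ge 2$ (or an infinite-order M\"obius map): it has no branch points on $\mathbb{T}$, so locally $R$ is a diffeomorphism of arcs; but the crucial point is to find a point where the forward orbit is well-separated.

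First I would choose $\zeta_0\in\mathbb{T}$ with $\|a(\zeta_0)\|\ge\|a\|-\varepsilon/2$, using that $a\in M_N(C(\mathbb{T}))$ and $\mathbb{T}$ is compact. The hard part will be to arrange, by perturbing $\zeta_0$ slightly if necessary, that the $m$ points $R(\zeta_0), R^{\circ 2}(\zeta_0),\dots,R^{\circ m}(\zeta_0)$ are all distinct from $\zeta_0$. If $\zeta_0$ is a periodic point of period $\le m$ this could fail at $\zeta_0$ itself, but the set of periodic points of period $\le m$ on $\mathbb{T}$ is finite (for a rational map of degree $\ge 2$, or for an infinite-order M\"obius map where it is empty on $\mathbb{T}$), so I can move $\zeta_0$ to a nearby non-periodic point while keeping $\|a(\zeta_0)\|\ge\|a\|-\varepsilon$ by continuity of $a$. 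Having done that, $\zeta_0, R(\zeta_0),\dots, R^{\circ m}(\zeta_0)$ are $m+1$ distinct points of $\mathbb{T}$, hence there is $\delta>0$ with the open arc $I=\{\zeta:|\zeta-\zeta_0|<\delta\}$ satisfying $R^{\circ j}(I)\cap I=\emptyset$ for $j=1,\dots,m$; this uses continuity of each $R^{\circ j}$ on $\mathbb{T}$ and that $R^{\circ j}(\zeta_0)\notin\overline{I}$ for small $\delta$. Shrinking $\delta$ further I may also assume $\|a(\zeta)\|\ge\|a\|-\varepsilon$ for $\zeta\in\overline I$.

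Then I take $c\in C(\mathbb{T})$ a bump function with $0\le c\le 1$, $\mathrm{supp}(c)\subset I$, and $c(\zeta_0)=1$. Property (1) is immediate. For (2): $c\,\alpha^j(c)=c\cdot(c\circ R^{\circ j})$, and at any $\zeta$ in the support of $c$ we have $\zeta\in I$, so $R^{\circ j}(\zeta)\notin I$ and hence $(c\circ R^{\circ j})(\zeta)=0$; so the product vanishes identically for $j=1,\dots,m$. For (3): $\|a\,\diag(c^2)\|=\sup_{\zeta\in\mathbb{T}}\|a(\zeta)\,c(\zeta)^2\|\ge\|a(\zeta_0)c(\zeta_0)^2\|=\|a(\zeta_0)\|\ge\|a\|-\varepsilon$, since $\diag(c^2)$ acts as the scalar $c(\zeta)^2$ times the identity matrix at each point $\zeta$, and $c(\zeta_0)=1$.

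The only genuine obstacle is the periodic-point issue in the middle paragraph, i.e.\ knowing that one can always land the near-maximizing point $\zeta_0$ outside the finite set of $\le m$-periodic points on $\mathbb{T}$; this is where the hypothesis on $R$ (degree $\ge 2$, or infinite-order M\"obius) is used, together with the fact that $R$ has no branch points on $\mathbb{T}$ so that $R$ and its iterates restrict to continuous (indeed local homeomorphisms) self-maps of $\mathbb{T}$. Everything else is the routine bump-function construction, so I would not belabor it.
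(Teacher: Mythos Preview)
Your proof is correct and follows essentially the same approach as the paper's: both pick a point where $\|a(\cdot)\|$ is nearly maximal, perturb it off the finite set of points with $R^{\circ j}(\zeta)=\zeta$ for some $j\le m$, and take a bump function supported in a small arc whose first $m$ forward images miss that arc. Two minor corrections that do not affect the argument: an infinite-order M\"obius self-map of $\mathbb{D}$ can have fixed points on $\mathbb{T}$ (the hyperbolic and parabolic cases), so your parenthetical ``empty on $\mathbb{T}$'' should read ``finite''; and avoiding periodic points of period $\le m$ only gives $R^{\circ j}(\zeta_0)\ne\zeta_0$ for each $j$, not that the $m+1$ iterates are pairwise distinct --- but the latter is not needed, since your disjointness argument for $R^{\circ j}(I)\cap I=\emptyset$ uses only the former.
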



\begin{proof}
We may identify $a \in M_N(C(\mathbb{T}))$
with a continuous $M_N (\mathbb{C})$-valued function on
$\mathbb{T}$. For $m \in \mathbb{N}$,
$W_m := \{z \in \mathbb{T} \, | \, R^{\circ j} (z) \neq z \ \text{for any} \,
j = 1, \dots , m\}$ is dense in $\mathbb{T}$,
since $R^{\circ k}$ is a non-identical rational function
for $k \in \mathbb{N}$.
Thus for any $\varepsilon > 0$,
there exists $w_0 \in W_m$ such that
$\| a(w_0) \| \geq \| a \| - \varepsilon$.
Since $R^{\circ j}(w_0) \neq w_0$,
there exist open neighborhoods $U_j$ of $w_0$
such that $U_j \cap (R^{\circ j})^{-1} (U_j) = \emptyset$ for $j= 1, \dots, m$.
Put $U = \bigcap_{j=1} ^m U_j$.
We can choose $c \in A = C(\mathbb{T})$ such that
$c(w_0) = 1, \, 0 \leq c \leq 1$
and $c(z) = 0$ for $z\in U^c$. 
Since $U \cap (R^{\circ j})^{-1} (U) = \emptyset$, it follows that
${\rm supp}(c) \cap {\rm supp} (\alpha^j (c) ) = \emptyset$
for $j= 1, \dots, m$.
Thus we have $c \alpha^j(c) = 0$ for $j = 1, \dots ,m$.
We identify $a \diag(c^2)$ with a continuous $M_N (\mathbb{C})$-valued
function on $\mathbb{T}$. Then
\[
  \| a \diag(c^2) \| \geq \|a(w_0) \diag(c(w_0)^2) \| = \| a(w_0) \|
  \geq \| a \| - \varepsilon.
\]
\end{proof}


Since $(\rho, V)$ is a representation of $X_R$,
we can define the representation
$(\rho,V^{(p)})$ of $X_R ^{\otimes p}$ such that
$V_\xi ^{(p)} = V_{\xi_1} \dots V_{\xi_p}$ for
$\xi = \xi_1 \otimes \dots \otimes \xi_p \in X_R ^{\otimes p}$.
For simplicity notation, we write $V_\xi$ instead of $V_\xi ^{(p)}$
for $\xi \in X_R ^{\otimes p}$ if no confusion can arise.
Let $B_{p,q}$ be the closed linear span of
$\{ V_\xi V_\eta ^*  \, | \,
\xi \in X_R ^{\otimes q}, \eta \in X_R ^{\otimes p} \}$.
We note that $B_{p,p}$ is a $C^*$-subalgebra of $B$.

For a sequence $\mu = (i_1, \dots , i_p) \in \{1, \dots, n \}^p$, set
$u_{\mu} = u_{i_1} \otimes \dots \otimes u_{i_p}$ and $| \mu | = p$.
Since $\{u_\mu \, | \ |\mu| = p \}$ is a finite orthonormal
basis of $X_R ^{\otimes p}$, we have
\[
  V_\xi = \sum_{|\mu|=p} V_{u_{\mu}}
  \rho(\langle u_\mu, \xi \rangle_A)
\]
for $\xi \in X_R ^{\otimes p}$.
We can rewrite $B_{p,p}$ as the closed linear span of
\[
  \{ V_{u_\mu}  \rho(a_{\mu \nu}) V_{u_\nu} ^* \, | \  |\mu|= |\nu| = p, \,
a_{\mu \nu} \in A \}.
\]
It is clear that $\{u_\mu \, | \ |\mu| = p \}$ is a finite orthonormal
basis of $X_R ^{\otimes p}$. Thus
$V_{u_\mu}^* V_{u_\nu} = \delta_{\mu \nu}I$
and $\sum_{|\mu|=p} V_{u_{\mu}} V_{u_{\mu}}^* = 1$.
Since $\rho$ is injective, there exists an isomorphism
$\Psi:B_{p,p} \to M_{n^p}(C(\mathbb{T})) \cong
\bigotimes _{k=1} ^p M_n(C(\mathbb{T}))$
such that
$\Psi(V_{u_\mu}  \rho(a_{\mu \nu}) V_{u_\nu} ^*) = a_{\mu \nu} e_{i_1 j_1}
\otimes \dots \otimes e_{i_p j_p}$,
where $\mu = (i_1, \dots , i_p), \, \mu = (j_1, \dots , j_p)$ and
$\{e_{ij}\}_{i,j = 1} ^n$
is the standard matrix units of $M_n(\mathbb{C})$.


\begin{lem} \label{lem:fixed point}
Let $R$ be a M\"{o}bius transformation of infinite order which maps $\mathbb{D}$ to itself 
or a finite Blaschke product of degree at least two.
For any $p \in \mathbb{N}, T_0 \in B_{p, p}, \varepsilon > 0$
and $m \in \mathbb{N}$,
there exists $c \in A$ satisfying $(1)$, $(2)$ of
Lemma $\ref{lem:matrix}$ and
\[
 \| \rho(\alpha^p(c)) T_0 \rho(\alpha^p(c)) \| \geq \| T_0 \| - \varepsilon.
\]
\end{lem}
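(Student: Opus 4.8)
The plan is to reduce Lemma~\ref{lem:fixed point} to Lemma~\ref{lem:matrix} via the isomorphism $\Psi:B_{p,p}\to M_{n^p}(C(\mathbb{T}))$ constructed just above, by tracking how conjugation by $\rho(\alpha^p(c))$ is transported through $\Psi$. First I would write $T_0\in B_{p,p}$ in its canonical form, i.e.\ approximate it within $\varepsilon/3$ (say) by a finite sum $T_0'=\sum_{|\mu|=|\nu|=p}V_{u_\mu}\rho(a_{\mu\nu})V_{u_\nu}^*$, so that $\Psi(T_0')=a:=(a_{\mu\nu})_{\mu\nu}\in M_{n^p}(C(\mathbb{T}))$ with $\|a\|$ close to $\|T_0\|$; it suffices to prove the estimate for $T_0'$ with a slightly smaller $\varepsilon$ and then absorb the error. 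The key computation is that $\rho(\alpha^p(c))$ corresponds under $\Psi$ to the diagonal matrix $\diag(c)$ (with the convention of the paper that $\diag(T)$ has all diagonal entries equal to $T$). Indeed, using $V_{u_\nu}^*\rho(b)=\rho(\langle u_\nu,\,b\cdot u_\nu\rangle_A\cdots)$—more precisely using the covariance relation $\rho(b)V_\xi=V_{b\cdot\xi}$ together with $V_\xi\rho(b)=V_{\xi b}$ and $(\xi b)(z)=\xi(z)b(R(z))$—one gets $V_{u_\mu}\rho(a_{\mu\nu})V_{u_\nu}^*\,\rho(\alpha^p(c))=V_{u_\mu}\rho(a_{\mu\nu})\rho(\alpha^{p-1}(c)\circ R)^{?}\cdots$; the clean way is to observe directly that for $\xi\in X_R^{\otimes p}$ one has $V_\xi\,\rho(\alpha^p(c))=V_{\xi\cdot\alpha^{p-1}(c)}=\cdots=V_{\xi c}$ after $p$ applications of the bimodule action, hence $\rho(\alpha^p(c))$ acts on the right of each $V_{u_\mu}$ as right multiplication by $c$, which under $\Psi$ is right multiplication by $\diag(c)$; symmetrically on the left. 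Therefore $\Psi(\rho(\alpha^p(c))T_0'\rho(\alpha^p(c)))=\diag(c)\,a\,\diag(c)$.

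With that identification in hand, the estimate we must produce is $\|\diag(c)\,a\,\diag(c)\|\geq\|a\|-\varepsilon'$ for a suitable $c$ satisfying (1) and (2) of Lemma~\ref{lem:matrix}. This is almost exactly the conclusion of Lemma~\ref{lem:matrix}, except that Lemma~\ref{lem:matrix} gives $\|a\,\diag(c^2)\|\geq\|a\|-\varepsilon'$, i.e.\ multiplication on one side by $c^2$ rather than on both sides by $c$. To bridge this gap I would apply Lemma~\ref{lem:matrix} to the matrix $a$ (over $M_{n^p}(C(\mathbb{T}))$, so $N=n^p$) with the given $m$ and a small tolerance $\varepsilon'$, obtaining $c$ with $0\le c\le1$, $c\alpha^j(c)=0$ for $j=1,\dots,m$, and $\|a\,\diag(c^2)\|\ge\|a\|-\varepsilon'$. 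Then at the point $w_0\in W_m$ produced in that proof one has $c(w_0)=1$, and since $\diag(c)\,a\,\diag(c)$ evaluated at $w_0$ equals $a(w_0)$ (because $c(w_0)=1$), we get pointwise $\|\diag(c)\,a\,\diag(c)\|\ge\|a(w_0)\|\ge\|a\|-\varepsilon'$. In other words the two-sided version is obtained by the same $w_0$-localization argument; one does not even need to deduce it formally from the one-sided statement—the cleanest route is to rerun the one-line evaluation estimate at $w_0$. Choosing $\varepsilon'$ and the approximation error each at most $\varepsilon/2$ (or $\varepsilon/3$) then yields $\|\rho(\alpha^p(c))T_0\rho(\alpha^p(c))\|\ge\|T_0\|-\varepsilon$.

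The main obstacle I anticipate is purely bookkeeping rather than conceptual: getting the transported action of $\rho(\alpha^p(c))$ under $\Psi$ exactly right, in particular checking that $p$-fold right multiplication in $X_R^{\otimes p}$ by the single function $c\in A$ is implemented by $\rho(\alpha^p(c))$ on the operator side (this uses $V_{\xi\otimes\eta}=V_\xi V_\eta$, the relation $V_\eta\rho(b)=V_{\eta b}$, and the fact that $(\eta b)(z)=\eta(z)b(R(z))$, iterated so that a factor $b$ on the innermost tensor leg corresponds to $\alpha^{p-1}(b)$ seen from the outermost leg—hence $\rho(\alpha^p(c))$ is what appears when the function $c$ sits to the right of the whole length-$p$ word). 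One should be careful that $\Psi$ is defined via $\Psi(V_{u_\mu}\rho(a_{\mu\nu})V_{u_\nu}^*)=a_{\mu\nu}\,e_{i_1j_1}\otimes\cdots\otimes e_{i_pj_p}$ with the \emph{coefficient} $a_{\mu\nu}\in C(\mathbb{T})$ placed as a scalar function multiplying the matrix unit, so that multiplying $T_0'$ on left and right by $\rho(\alpha^p(c))$ multiplies each coefficient $a_{\mu\nu}$ by $c$ on both sides, i.e.\ gives $\diag(c)\,\Psi(T_0')\,\diag(c)$ as claimed. Once this dictionary is nailed down, the rest is the two-line localization estimate at $w_0$ plus an $\varepsilon/2$ split for the approximation of $T_0$ by $T_0'$.
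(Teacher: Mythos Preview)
Your approach is essentially the paper's: transport conjugation by $\rho(\alpha^p(c))$ through the isomorphism $\Psi:B_{p,p}\to M_{n^p}(C(\mathbb{T}))$ and invoke Lemma~\ref{lem:matrix}. Two simplifications you overlook make the paper's version cleaner. First, no approximation is needed: since $\Psi$ is an isomorphism, every $T_0\in B_{p,p}$ is \emph{exactly} of the form $\sum_{|\mu|=|\nu|=p}V_{u_\mu}\rho(a_{\mu\nu})V_{u_\nu}^*$ (apply $\Psi^{-1}$ to the matrix $\Psi(T_0)$). Second, and more to the point, your worry about the gap between $\diag(c)\,a\,\diag(c)$ and $a\,\diag(c^2)$ is unfounded: the coefficient algebra $A=C(\mathbb{T})$ is commutative, so entrywise $c\,a_{\mu\nu}\,c=a_{\mu\nu}c^2$, and hence $\diag(c)\,a\,\diag(c)=a\,\diag(c^2)$ on the nose. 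With this, Lemma~\ref{lem:matrix} applies directly to give $\|a\,\diag(c^2)\|\geq\|a\|-\varepsilon$, and there is no need to reopen its proof to extract the point $w_0$. The key intertwining relation you identify, $\rho(\alpha^p(c))V_{u_\mu}=V_{u_\mu}\rho(c)$ for $|\mu|=p$, is exactly what the paper uses.
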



\begin{proof}
Take $T_0 \in B_{p, p}$ with
\[
  T_0 = \sum_{|\mu| = |\nu| =p}
  V_{u_\mu} \rho(a_{\mu \nu}) V_{u_\nu} ^*.
\]
Put $b=(b_{ij})_{i,j=1, \dots , n^p} = \Psi(T_0)$.
Choose $c \in A$ as in Lemma \ref{lem:matrix}.
Since $\rho(\alpha^p(c))V_{u_\mu} = V_{u_\mu} \rho (c)$ for $| \mu | = p$,
we have
\begin{align*}
  \| \rho(\alpha^p(c)) T_0 \rho(\alpha^p(c)) \| & =
  \left \|\sum_{|\mu| = |\nu| =p}
  V_{u_\mu} \rho(a_{\mu \nu} c^2) V_{u_\nu} ^* \right \| \\
  &= \| (b_{ij} c^2)_{i,j} \|
  = \| b \diag(c^2) \|
  \geq \| b \| - \varepsilon
  = \| T_0 \| - \varepsilon
\end{align*}
by Lemma \ref{lem:matrix}.
\end{proof}


We denote by $B^{\rm{alg}}$ the $*$-algebra generated by
$\{ \rho(a), V_\xi \, | \, a \in A, \xi \in X_R \}$.


\begin{lem} \label{lem:free}
Let $R$ be a M\"{o}bius transformation of infinite order which maps $\mathbb{D}$ to itself 
or a finite Blaschke product of degree at least two
and let $p, m \in \mathbb{N}$.
Suppose $T \in B^{\rm{alg}}$ satisfies $T = \sum_{j=-m} ^m T_j$ with
$T_j \in B_{p,p+j}$.
For any $\varepsilon > 0$, there exists $d \in A$ satisfying
the following
\begin{enumerate}
\item $0 \leq d \leq 1$,
\item $\rho(d) T_j \rho(d) = 0$ for $j \neq 0$,
\item $\| \rho(d) T_0 \rho(d) \| \geq \| T_0 \| - \varepsilon$.
\end{enumerate}
\end{lem}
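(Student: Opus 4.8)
\textbf{Proof proposal for Lemma \ref{lem:free}.}

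The plan is to iterate Lemma \ref{lem:fixed point}. Intuitively, the obstruction to killing the off-diagonal terms $T_j$ ($j \neq 0$) by conjugating with a single $\rho(d)$ is that each $T_j \in B_{p,p+j}$ carries a ``shift'' by $j$ in the gauge grading: $\rho(d) T_j \rho(d)$ rewrites (using $\rho(\alpha^p(c))V_{u_\mu} = V_{u_\mu}\rho(c)$ on the left and the analogous relation $V_{u_\nu}^*\rho(\alpha^{p+j}(c)) = \rho(c)V_{u_\nu}^*$ on the right, which follows from the covariance relations) into something of the form $V_{u_\mu}\rho(c\,a_{\mu\nu}\,\alpha^{|j|}(\text{something})\,\dots)V_{u_\nu}^*$ — so the two conjugating factors land on arguments whose supports are related by $\alpha^{|j|}$. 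Hence if we choose $c$ so that $c$ and $\alpha^k(c)$ have disjoint supports for $k = 1, \dots, 2m$, conjugation by $\rho(c)$ will annihilate every $T_j$ with $1 \leq |j| \leq m$ while preserving the norm of $T_0$ up to $\varepsilon$.

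Concretely, first I would apply Lemma \ref{lem:fixed point} to the given $p$, to $T_0 \in B_{p,p}$, to the given $\varepsilon$, and to the integer $2m$ in place of $m$; this yields $c \in A$ with $0 \leq c \leq 1$, with $c\,\alpha^k(c) = 0$ for $k = 1, \dots, 2m$, and with $\|\rho(\alpha^p(c))T_0\rho(\alpha^p(c))\| \geq \|T_0\| - \varepsilon$. I would then set $d := \alpha^p(c)$; note $0 \leq d \leq 1$ since $\alpha$ is a unital positive (indeed $*$-endomorphism) map, giving (1), and (3) is exactly the conclusion of Lemma \ref{lem:fixed point}.

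For (2), the computation is the heart of the argument. Write $T_j = \sum_{|\mu|=p,\,|\nu|=p+j} V_{u_\mu}\rho(a_{\mu\nu})V_{u_\nu}^*$ for $j \geq 0$ (for $j < 0$ the roles of $|\mu|$ and $|\nu|$ swap; I treat $j > 0$, the other case being symmetric). Using $\rho(\alpha^p(c))V_{u_\mu} = V_{u_\mu}\rho(c)$ and $V_{u_\nu}^*\rho(\alpha^{p+j}(c)) = \rho(c)V_{u_\nu}^*$ we get
\[
  \rho(d)T_j\rho(d) = \sum_{\mu,\nu} V_{u_\mu}\,\rho\!\bigl(c\,a_{\mu\nu}\,\alpha^{p}\bigl(\alpha^{j-p}(\,\cdot\,)\bigr)\bigr)V_{u_\nu}^*,
\]
but more carefully: the right factor $\rho(d) = \rho(\alpha^p(c))$ when moved past $V_{u_\nu}^*$ with $|\nu| = p+j$ becomes $\rho(\alpha^{-j}(c))$ acting after — so I would instead move the full $d$ leftward to obtain $V_{u_\mu}\rho(c\, a_{\mu\nu})$ on the left and, pushing $d$ past $V_{u_\nu}^*$, a factor $\rho(\beta)$ with $\beta$ supported where $\alpha^j(c)$ is nonzero up to the Aleksandrov-operator smearing; the key point preserved under smearing is that $\beta$ vanishes wherever $c$ is nonzero, because $\mathrm{supp}(c) \cap \mathrm{supp}(\alpha^j(c)) = \emptyset$ for $1 \leq j \leq m$ forces $c \cdot (\text{anything depending only on } \alpha^j(c)\text{'s support}) = 0$. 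Thus each summand contains a factor $\rho(c \cdot \beta) = \rho(0) = 0$, so $\rho(d)T_j\rho(d) = 0$ for $1 \leq j \leq m$, and symmetrically for $-m \leq j \leq -1$ one uses disjointness of $\mathrm{supp}(c)$ and $\mathrm{supp}(\alpha^{|j|}(c))$ again (this is where needing $k$ up to $2m$, rather than $m$, comes in, since conjugating $T_j$ with $|j| \le m$ on both sides can involve $\alpha^{2|j|}$). I expect the main obstacle to be exactly this bookkeeping: carefully tracking how $\rho(d)$ commutes through $V_{u_\mu}$ and $V_{u_\nu}^*$ in $B_{p,p+j}$ (the asymmetry between the $|\mu|=p$ and $|\nu|=p+j$ legs), and verifying that the Aleksandrov smearing implicit in rewriting products $V_{u_\nu}^*\rho(b)$ cannot move support out of $\mathrm{supp}(\alpha^{|j|}(c))$ — so that the disjoint-support choice of $c$ genuinely forces the product to vanish rather than merely to be small.
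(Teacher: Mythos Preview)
Your overall strategy --- apply Lemma~\ref{lem:fixed point} and set $d=\alpha^p(c)$ --- is exactly what the paper does, and parts (1) and (3) are fine. The problem is your verification of (2), which is both overcomplicated and incorrect in several details.

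First, a bookkeeping point: by the paper's definition, $B_{p,p+j}$ is the closed span of $V_\xi V_\eta^*$ with $\xi\in X_R^{\otimes(p+j)}$ and $\eta\in X_R^{\otimes p}$, so the \emph{first} leg has length $p+j$ and the \emph{second} has length $p$; your decomposition has the lengths swapped. More importantly, the commutation relation you need is exact and purely algebraic: for $\zeta\in X_R^{\otimes k}$ one has $\alpha^k(c)\cdot\zeta=\zeta\cdot c$ in the bimodule, hence $\rho(\alpha^k(c))V_\zeta=V_\zeta\rho(c)$ and, taking adjoints, $V_\zeta^*\rho(\alpha^k(c))=\rho(c)V_\zeta^*$. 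There is no ``Aleksandrov smearing'' here and no need to worry about supports moving --- the identities are exact, not approximate. With these relations the computation for $1\le j\le m$ (and $\xi\in X_R^{\otimes(p+j)}$, $\eta\in X_R^{\otimes p}$) is simply
\[
\rho(\alpha^p(c))\,V_\xi V_\eta^*\,\rho(\alpha^p(c))
=\rho(\alpha^p(c))\,V_\xi\,\rho(c)\,V_\eta^*
=\rho(\alpha^p(c))\,\rho(\alpha^{p+j}(c))\,V_\xi V_\eta^*
=\rho\bigl(\alpha^p(c\,\alpha^j(c))\bigr)\,V_\xi V_\eta^*=0,
\]
using only $c\,\alpha^j(c)=0$ for $1\le j\le m$; the case $-m\le j\le -1$ follows by taking adjoints. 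In particular, your invocation of $\alpha^{-j}$ (which is undefined --- $\alpha$ is not invertible when $\deg R\ge 2$) and your claim that one must allow $k$ up to $2m$ because ``$\alpha^{2|j|}$'' appears are both wrong: the paper applies Lemma~\ref{lem:fixed point} with the given $m$, not $2m$, and nothing beyond $c\,\alpha^j(c)=0$ for $1\le j\le m$ is used.
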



\begin{proof}
We choose $c \in A$ as in Lemma \ref{lem:fixed point}
and put $d = \alpha^p(c)$.
It is clear that $d$ satisfies (1) and (3) by Lemma \ref{lem:fixed point}.
For $-m \leq j \leq m$, $T_j$ is a finite sum of terms in the
form such that
\[ V_\xi V_\eta ^*, \quad \xi \in X_R ^{\otimes p+j},
   \ \eta \in X_R ^{\otimes p}.
\]
For $1 \leq j \leq m$, we have
\begin{align*}
  \rho(d) V_\xi V_\eta ^* \rho(d)
  &= \rho(\alpha^p (c)) V_\xi V_\eta ^* \rho(\alpha^p(c))
  = \rho(\alpha^p (c)) V_\xi \rho(c) V_\eta ^* \\
  &= \rho(\alpha^p (c)) \rho(\alpha^{p+j} (c)) V_\xi V_\eta ^*
  = \rho(\alpha ^p (c \alpha^j (c))) V_\xi  V_\eta ^* = 0.
\end{align*}
Thus $\rho(d) T_j \rho(d) = 0$. The proof of the case
$-m \leq j \leq -1$ is similar.
\end{proof}


We now show the following uniquness theorem.


\begin{prop} \label{prop:uniqueness}
Let $R$ be a M\"{o}bius transformation of infinite order which maps $\mathbb{D}$ to itself 
or a finite Blaschke product of degree at least two.
Let $A=C(\mathbb{T})$ and $D$ be a unital $C^*$-algebra.
Suppose $(\rho, V)$ is a covariant representation of
the Hilbert bimodule $X_R$ on $D$ such that $\rho$ is unital and injective
and $B$ is a $C^*$-algebra generated by $\rho(a)$
with $a \in A$ and $V_\xi$ with $\xi \in X_R$.
Then there exists an isomorphism $\Phi: B \to \mathcal{O}_{X_R}$
such that $\Phi(\rho(a)) = a$ and $\Phi(V_\xi) =S_\xi$ for $a \in A, \,
\xi \in X_R$.
\end{prop}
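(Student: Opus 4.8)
The plan is to establish the isomorphism via the standard gauge-invariant uniqueness strategy, but made concrete and self-contained using the finite orthonormal basis $\{u_i\}_{i=1}^n$. By the universal property of $\mathcal{O}_{X_R}$ together with Proposition \ref{prop:relation}-type considerations (the pair $(\rho,V)$ is a covariant representation of $X_R$ on $D$), there is a surjective $*$-homomorphism $\Phi_0:\mathcal{O}_{X_R}\to B$ sending $i(a)\mapsto\rho(a)$ and $S_\xi\mapsto V_\xi$; I must show $\Phi_0$ is injective, equivalently that $B\cong\mathcal{O}_{X_R}$ canonically, so I will actually construct $\Phi$ in the direction $B\to\mathcal{O}_{X_R}$ and check it is a well-defined isomorphism. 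The core is a norm estimate: for any $T$ in the $*$-algebra $B^{\mathrm{alg}}$, if $E_0$ denotes the conditional-expectation-type projection onto the ``degree zero'' part, then $\|T\|\ge\|E_0(T)\|$ with the reverse-type control needed to conclude the map descends.

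First I would record the grading structure: $B^{\mathrm{alg}}$ is spanned by the spaces $B_{p,q}$, and any $T\in B^{\mathrm{alg}}$ can be written, after multiplying by a suitable $\sum_{|\mu|=p}V_{u_\mu}V_{u_\mu}^*=1$ on both sides for $p$ large, as a finite sum $T=\sum_{j=-m}^m T_j$ with $T_j\in B_{p,p+j}$; the piece $T_0\in B_{p,p}$ is the ``diagonal'' part and one checks $\|T_0\|$ does not depend on the chosen large $p$ (using the isomorphisms $\Psi:B_{p,p}\to M_{n^p}(C(\mathbb{T}))$ and compatibility under $p\mapsto p+1$). This defines a contractive map $E:B^{\mathrm{alg}}\to\varinjlim B_{p,p}$, which one identifies with the fixed-point algebra $\mathcal{O}_{X_R}^{\mathbb{T}}$ under the gauge action. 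Next, Lemma \ref{lem:free} gives, for each such $T$ and each $\varepsilon>0$, a positive contraction $d\in A$ with $\rho(d)T\rho(d)=\rho(d)T_0\rho(d)$ and $\|\rho(d)T_0\rho(d)\|\ge\|T_0\|-\varepsilon$; hence $\|T\|\ge\|\rho(d)T\rho(d)\|\ge\|T_0\|-\varepsilon$, so $\|T\|\ge\|E(T)\|=\|T_0\|$. Combined with the analogous statements for $\mathcal{O}_{X_R}$ (where the canonical conditional expectation $E$ is faithful), this shows that $E$ on $B$ is faithful on positives, which is exactly the hypothesis needed to run the gauge-invariant uniqueness argument: the surjection $\mathcal{O}_{X_R}\to B$ intertwines the two conditional expectations and, being faithful on the fixed-point algebras (there $\rho$ injective does the job via $\Psi$), it is injective.

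More precisely, I would argue as follows. Let $\Phi_0:\mathcal{O}_{X_R}\to B$ be the canonical surjection. On the fixed-point algebra $\mathcal{O}_{X_R}^{\mathbb{T}}=\varinjlim\mathcal{K}(X_R^{\otimes p})\cong\varinjlim M_{n^p}(C(\mathbb{T}))$ the map $\Phi_0$ restricts to the corresponding map into $\varinjlim B_{p,p}$, which is an isomorphism because $\rho$ is injective and unital (so each $\Psi^{-1}$ is implemented faithfully). For general $x\in\mathcal{O}_{X_R}$ with $x\ge0$ and $\Phi_0(x)=0$: approximate $x$ by elements of the algebraic core, apply $E$, use that $E$ is faithful on $\mathcal{O}_{X_R}$ and that $\Phi_0\circ E=E_B\circ\Phi_0$ where $E_B$ is the (now-shown) well-defined contractive map on $B$ with $\|E_B(\Phi_0(y))\|\le\|\Phi_0(y)\|$; since $\Phi_0(E(x))=E_B(0)=0$ and $\Phi_0$ is injective on the fixed-point algebra, $E(x)=0$, whence $x=0$. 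Therefore $\Phi_0$ is an isomorphism, and $\Phi:=\Phi_0^{-1}$ has the stated properties $\Phi(\rho(a))=a$, $\Phi(V_\xi)=S_\xi$.

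The main obstacle I anticipate is making the ``$E$ descends to $B$ and is contractive'' step rigorous without circularity: one cannot invoke the gauge action on $B$ a priori (there is no given $\mathbb{T}$-action on $D$), so the estimate $\|E(T)\|\le\|T\|$ for $T\in B^{\mathrm{alg}}$ must be extracted purely from Lemma \ref{lem:free} and a careful check that $\|T_0\|$ is independent of the truncation parameter $p$. This is where the finite orthonormal basis is essential: the identities $V_{u_\mu}^*V_{u_\nu}=\delta_{\mu\nu}I$ and $\sum_{|\mu|=p}V_{u_\mu}V_{u_\mu}^*=1$ let one compress everything into matrix algebras over $C(\mathbb{T})$ and transport the elementary norm computation of Lemma \ref{lem:matrix}. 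Once $E_B$ is in hand as a genuine contraction (in fact one then recognizes it as a conditional expectation onto $\varinjlim B_{p,p}$), the remainder is the routine faithfulness argument sketched above.
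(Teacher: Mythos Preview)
Your proposal is correct and follows essentially the same approach as the paper: obtain the surjection $\Phi_0:\mathcal{O}_{X_R}\to B$ from universality, use Lemma~\ref{lem:free} to prove the norm inequality $\|T_0\|\le\|T\|$ for $T\in B^{\mathrm{alg}}$, deduce a conditional expectation on $B$ intertwining with the canonical one on $\mathcal{O}_{X_R}$, and conclude injectivity from injectivity on the fixed-point algebra (which follows from injectivity of $\rho$). Your discussion of the ``main obstacle'' is more explicit than the paper's treatment, which simply asserts the existence of the expectation $F$ once the inequality is in hand, but the underlying argument is the same.
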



\begin{proof}
By the universality of $\mathcal{O}_{X_R}$,
there exists a surjective homomorphism $\Psi: \mathcal{O}_{X_R} \to B$
such that
$\Psi(a) = \rho(a)$ and $\Psi(S_\xi) = V_{\xi}$ for $a \in A$ and
$\xi \in X_R$.
Let $T \in B^{alg}$.
There exist $p, m \in \mathbb{N}$ such that $T=\sum_{j=-m} ^m T_j$
and $T_j \in B_{p,p+j}$.
For any $\varepsilon > 0$, we choose $d \in A$ as
in Lemma \ref{lem:free}.
We have
\[
  \| T \| \geq \| \rho(d) T \rho(d) \|
  = \| \rho(d) T_0  \rho(d) \| \geq \| T_0 \| - \varepsilon.
\]
by Lemma \ref{lem:free}.
Since $\varepsilon$ is arbitrary, it follows that $\| T_0 \| \leq \| T \|$.
Thus there exists a conditional expectation $F: B \to B^{(0)}$ such that
$F \circ \Psi = \Psi |_{\mathcal{O}_{X_R} ^{\mathbb{T}}} \circ E$,
where $B^{(0)}$ is the closed linear span
of $\{ \rho(a) , V_{\xi} V_{\eta} ^* \, | \, a \in A, \, \xi, \eta \in
X_R ^{\otimes k},\,  k \in \mathbb{N} \}$ and
$E$ is the conditional expectation form $\mathcal{O}_{X_R}$
onto $\mathcal{O}_{X_R} ^{\mathbb{T}}$.
Since $\Psi$ is injective on $A$, $\Psi$
is injective on $\mathcal{O}_{X_R} ^{\mathbb{T}}$ by \cite{Pi}
(see also \cite[Lemma 2.2]{KPW}).
Hence $\Psi$ is injective on $\mathcal{O}_{X_R}$
and $\Phi := \Psi ^{-1}$ is the desired isomorphism.
\end{proof}


We do not identify $X_R ^{\otimes m}$ with $X_{R^{\circ m}}$ in the proof of
Proposition \ref{prop:uniqueness}. But in fact we can identify
them in the following way.
We note that
the $m$-th iteration of a finite Blaschke
product is also a finite Blaschke product.


\begin{prop}
Let $R$ be a finite Blaschke product. Then there exists
an isomorphism $\Psi: X_R ^{\otimes m} \to X_{R^{\circ m}}$
as a Hilbert bimodule over $A$ such that
\[
  \Psi(\xi_1 \otimes \dots \otimes \xi_m) = \xi_1 (\xi_2 \circ R)
  (\xi_3 \circ R^{\circ 2}) \dots (\xi_m \circ R^{\circ (m-1)}).
\]
\end{prop}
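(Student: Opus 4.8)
The plan is to exhibit the map $\Psi$ explicitly on elementary tensors, show it is a well-defined $A$-bimodule map preserving the $A$-valued inner product, and conclude that it extends to a Hilbert-bimodule isomorphism. The candidate formula $\Psi(\xi_1 \otimes \dots \otimes \xi_m) = \xi_1 \, (\xi_2 \circ R) \, (\xi_3 \circ R^{\circ 2}) \cdots (\xi_m \circ R^{\circ (m-1)})$ lands in $C(\mathbb{T}) = X_{R^{\circ m}}$ because each $\xi_j \circ R^{\circ (j-1)}$ is continuous on $\mathbb{T}$. Here I would use throughout the identity $A_{R^{\circ m}} = A_R^m$, which follows from the multiplicativity of Aleksandrov-Clark measures under composition; equivalently, for a finite Blaschke product, $R^{-1}$ applied iteratively and the weights $1/|(R^{\circ m})'|$ factor by the chain rule $|(R^{\circ m})'(z)| = \prod_{k=0}^{m-1} |R'(R^{\circ k}(z))|$.

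First I would check that the formula respects the balancing relations of the tensor product over $A$, i.e. that $\Psi(\xi_1 \otimes \dots \otimes (\xi_j \cdot b) \otimes \xi_{j+1} \otimes \dots) = \Psi(\xi_1 \otimes \dots \otimes \xi_j \otimes (b \cdot \xi_{j+1}) \otimes \dots)$ for $b \in A$, which is immediate since $(\xi_j \cdot b)(z) = \xi_j(z) b(R(z))$ and so $((\xi_j \cdot b) \circ R^{\circ(j-1)})(z) = (\xi_j \circ R^{\circ(j-1)})(z) \cdot (b \circ R^{\circ j})(z)$, matching the left action of $b$ on the $(j+1)$-st factor after composing with $R^{\circ j}$. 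Second, the bimodule structure: the left action of $A$ on $X_R^{\otimes m}$ acts on the first factor, giving multiplication by $a$ on $\Psi(\cdot)$; the right action acts on the last factor by $\xi_m \mapsto \xi_m \cdot b$, i.e. $\xi_m(z) b(R(z))$, and after composing with $R^{\circ(m-1)}$ this becomes multiplication by $b \circ R^{\circ m}$, which is exactly the right action of $A$ on $X_{R^{\circ m}}$. So $\Psi$ is an $A$-bimodule map.

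Third, and this is the step requiring the most care, I would verify that $\Psi$ preserves the inner product: $\langle \Psi(\xi_1 \otimes \dots \otimes \xi_m), \Psi(\eta_1 \otimes \dots \otimes \eta_m) \rangle_A = \langle \xi_1 \otimes \dots \otimes \xi_m, \eta_1 \otimes \dots \otimes \eta_m \rangle_A$. The right-hand side unwinds inductively as $A_R(\overline{\xi_1}\eta_1 \, A_R(\overline{\xi_2}\eta_2 \, A_R(\cdots A_R(\overline{\xi_m}\eta_m)\cdots)))$ by the definition of the inner product on a tensor product of Hilbert bimodules and the formula $\langle \xi, \eta \rangle_A = A_R(\overline{\xi}\eta)$. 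The left-hand side is $A_{R^{\circ m}}\bigl(\overline{\Psi(\xi_\bullet)} \, \Psi(\eta_\bullet)\bigr) = A_{R^{\circ m}}\bigl(\prod_{j=1}^m (\overline{\xi_j}\eta_j)\circ R^{\circ(j-1)}\bigr)$. Using $A_{R^{\circ m}} = A_R \circ A_R \circ \dots \circ A_R$ ($m$ times) together with the transfer-operator identity $A_R(f \cdot (g \circ R)) = A_R(f) \cdot g$ (which holds because $A_R$ is $A$-bilinear over the right action, a consequence of the module-map property of $A_R$), one peels off the factors one at a time from the inside: $A_R\bigl((\overline{\xi_m}\eta_m)\circ R^{\circ(m-1)}\bigr)$ is handled by pulling $(\overline{\xi_{m-1}}\eta_{m-1})\circ R^{\circ(m-2)}$ through, and so on, so that the $m$-fold composition of $A_R$ collapses to exactly the nested expression above. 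The main obstacle is bookkeeping the composition indices correctly in this induction and confirming that the transfer-operator identity $A_R(f \cdot (g\circ R)) = A_R(f)\cdot g$ is available in the stated generality — but this is precisely the remark in the excerpt that $A_R$ is a transfer operator for $(C(\mathbb{T}), \alpha)$ in the sense of Exel, so it may be cited. Finally, since $\Psi$ is an inner-product-preserving $A$-bimodule map with dense range (the elementary tensors of the orthonormal basis $\{u_i\}$ map to the orthonormal basis $\{u_{i_1}(u_{i_2}\circ R)\cdots\}$ of $X_{R^{\circ m}}$, which one checks is indeed a finite orthonormal basis there), it is an isometric isomorphism, completing the proof.
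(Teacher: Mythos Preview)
Your approach is correct and differs from the paper's in two places.  For the inner-product computation, the paper works directly with the explicit sum formula $A_R(f)(w)=\sum_{z\in R^{-1}(w)}\frac{1}{|R'(z)|}f(z)$ together with the chain rule $(R^{\circ m})'(z)=\prod_{k}R'(R^{\circ k}(z))$, writing out the case $m=2$ explicitly; you instead argue abstractly via the identities $A_{R^{\circ m}}=A_R^{\,m}$ and $A_R\bigl(f\,(g\circ R)\bigr)=A_R(f)\,g$.  Both routes work; yours is tidier once those two identities are on record.  One bookkeeping slip: the tensor inner product actually unwinds as
\[
A_R\bigl(\overline{\xi_m}\eta_m\cdot A_R(\overline{\xi_{m-1}}\eta_{m-1}\cdots A_R(\overline{\xi_1}\eta_1)\cdots)\bigr),
\]
with the \emph{first} factor innermost (compare the paper's $m=2$ computation $\langle\xi_1\otimes\xi_2,\eta_1\otimes\eta_2\rangle_A=\langle\xi_2,\langle\xi_1,\eta_1\rangle_A\cdot\eta_2\rangle_A$), not the last as you wrote.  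Because $C(\mathbb{T})$ is commutative this reversal is harmless, but your description of ``peeling from the inside starting with $\xi_m$'' is off: the first application of $A_R$ pulls out everything composed with $R$ and leaves $A_R(\overline{\xi_1}\eta_1)$.

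For surjectivity, the paper takes a different and arguably cleaner route: the image of $\Psi$ is a $*$-subalgebra of $C(\mathbb{T})$ separating points, hence sup-norm dense by Stone--Weierstrass; since $\|\cdot\|$ and $\|\cdot\|_2$ are equivalent and $\Psi$ is $\|\cdot\|_2$-isometric, $\Psi$ is onto.  Your argument, that the images $\Psi(u_\mu)$ form a finite orthonormal basis of $X_{R^{\circ m}}$, is also valid, but the step you label ``one checks'' needs a line of justification: orthonormality is immediate from inner-product preservation, while the basis property can be seen by noting that $T_{\Psi(u_\mu)}C_{R^{\circ m}}=(T_{u_{i_1}}C_R)\cdots(T_{u_{i_m}}C_R)$, so the $\Psi(u_\mu)$ give an orthonormal basis of $H^2\ominus T_{R^{\circ m}}H^2$ and the remark following Proposition~3.3 applies to $R^{\circ m}$.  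The Stone--Weierstrass route sidesteps this verification entirely.
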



\begin{proof}
It is easy to show that $\Psi$ is well-defined and a bimodule homomorphism.
We show that $\Psi$ preserves the inner product.
For simplicity of the notation,
we consider the case when $m=2$.
\begin{align*}
\langle \xi_1 & \otimes \xi_2, \eta_1 \otimes \eta_2 \rangle_A (w_2)
= \langle \xi_2, \langle \xi_1, \eta_1 \rangle_A \cdot \eta_2 \rangle_A (w_2) \\
&= \sum_{w_1 \in R^{-1} (w_2)} \frac{1}{|R'(w_1)|} \ \overline{\xi_2(w_1)}
   \ (\langle \xi_1, \eta_1 \rangle_A \cdot \eta_2) (w_1) \\
&= \sum_{w_1 \in R^{-1} (w_2)} \frac{1}{|R'(w_1)|} \ \overline{\xi_2(w_1)}
   \left( \sum_{z \in R^{-1} (w_1)} \frac{1}{|R'(z)|} \ \overline{\xi_1 (z)} \eta_1(z) \right) \eta_2 (w_1) \\
&= \sum_{z \in (R^{\circ 2})^{-1} (w_2)} \frac{1}{|R'(R(z))R'(z)|}
   \ \overline{\xi_1 (z) \xi_2 (R(z))} \eta_1 (z) \eta _2 (R(z)) \\
&= \sum_{z \in (R^{\circ 2})^{-1} (w_2)} \frac{1}{|(R^{\circ 2})'(z)|}
   \ \overline{\xi_1 (z) \xi_2 (R(z))} \eta_1 (z) \eta _2 (R(z)) \\
&= \langle \Psi(\xi_1 \otimes \xi_2), \Psi(\eta_1 \otimes \eta_2) \rangle_A (w_2).
\end{align*}

Since $\Psi$ preserves the inner product, $\Psi$ is injective.
We next prove that $\Psi$ is surjective.
Since $\im \Psi$ is $*$-subalgebra of $X_{R^{\circ m}}$
and separates the two points, $\im \Psi$ is dense in $X_{R^{\circ m}}$
with respect to $\| \ \|$ by the Stone-Weierstrass Theorem.
Since $\| \ \|$ and $\| \ \|_2$ are equivalent and $\Psi$
is isometric with respect to $\| \ \|_2$, $\Psi$ is surjective.
\end{proof}


\section{Quotient algebra $\mathcal{OC}_R$}
\label{sect:main}

The following theorem is a generalization of Theorem \ref{thm:Jury}.
Let $R$ be a finite Blaschke product.
We show that the $C^*$-algebra $\mathcal{OC}_{R}$ is isomorphic to
the crossed product $C(\mathbb{T}) \rtimes_\alpha \mathbb{Z} / q \mathbb{Z}$
in Theorem \ref{thm:Jury} or the Cuntz-Pimsner algebra $\mathcal{O}_{X_R}$
associated to the Hilbert bimodule $X_R$ over $C(\mathbb{T})$.

\begin{thm} \label{thm:uniqueness}
Let $R$ be a finite Blaschke product.
If $R$ is a M\"{o}bius transformation of finite order $q$,
then $\mathcal{OC}_R$ is isomorphic to
$C(\mathbb{T}) \rtimes _{\alpha} \mathbb{Z} / q \mathbb{Z}$,
where $\alpha(a) = a \circ R$ for $a \in C(\mathbb{T})$.
Otherwise, there exists an isomorphism $\Phi: \mathcal{OC}_R \to
\mathcal{O}_{X_R}$ such that $\Phi(\pi(T_a)) = a$ for $a \in C(\mathbb{T})$
and $\Phi(\pi(C_R)) = S_1$, where $\pi$ is the canonical quotient map
$\mathcal{TC}_R$ to $\mathcal{OC}_R$ and $1$ is the constant map in
$X_R$ taking constant value $1$.
\end{thm}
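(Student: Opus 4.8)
The plan is to split into the two cases stated in the theorem. First suppose $R$ is a Möbius transformation of finite order $q$. Then $R$ is an elliptic automorphism of $\mathbb{D}$, $C_R$ is unitary with $C_R^q = I$ (up to a scalar which can be absorbed), and $\pi(C_R)$ is a unitary in $\mathcal{OC}_R$ of order dividing $q$ implementing the automorphism $\alpha$ of $C(\mathbb{T})$, since $C_R^* T_a C_R = T_{a\circ R}$ modulo compacts (this is the covariant relation of Theorem~\ref{thm:cp_map}, with $A_R(a) = a\circ R$ when $R$ has degree one). Thus $\mathcal{OC}_R$ is a quotient of the universal crossed product $C(\mathbb{T})\rtimes_\alpha \mathbb{Z}/q\mathbb{Z}$; conversely, the Fejér–Riesz / spectral subspace argument (or one simply invokes Theorem~\ref{thm:Jury} directly) shows the quotient map is injective. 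In fact this case is exactly Theorem~\ref{thm:Jury}(1), so I would just cite it and move on.

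The substantial case is when $R$ is either a Möbius transformation of infinite order or a finite Blaschke product of degree at least two. Here the strategy is: (i) exhibit a covariant representation of $X_R$ inside $\mathcal{OC}_R$ whose image is all of $\mathcal{OC}_R$, and (ii) apply the uniqueness theorem Proposition~\ref{prop:uniqueness} to conclude that this image is isomorphic to $\mathcal{O}_{X_R}$. Step (i) is essentially done: Proposition~\ref{prop:relation} already gives that $(\rho, V)$ with $\rho(a) = \pi(T_a)$ and $V_\xi = \pi(T_\xi C_R)$ is a covariant representation of $X_R$ on $\mathcal{OC}_R$ with $\rho$ unital and injective (the injectivity of $\rho$ comes from the fact that the only continuous symbol Toeplitz operator that is compact is zero). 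So the one genuinely new thing to check is that the $C^*$-algebra $B$ generated by $\{\rho(a), V_\xi\}$ is all of $\mathcal{OC}_R$, i.e.\ that $\pi(T_z)$ and $\pi(C_R)$ lie in $B$ and generate it. That $\pi(T_a)=\rho(a)\in B$ for all $a\in C(\mathbb{T})$ is immediate; in particular $\pi(T_z)\in B$. For $\pi(C_R)$: note $V_1 = \pi(T_1 C_R) = \pi(C_R)$ since $T_1 = I$ and the constant function $1$ lies in $X_R = C(\mathbb{T})$. Hence $B$ contains both generators $\pi(T_z), \pi(C_R)$ of $\mathcal{OC}_R$, so $B = \mathcal{OC}_R$.

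Now apply Proposition~\ref{prop:uniqueness} with $D = \mathcal{OC}_R$ and $(\rho,V)$ as above: it produces an isomorphism $\Phi: \mathcal{OC}_R = B \to \mathcal{O}_{X_R}$ with $\Phi(\rho(a)) = a$ and $\Phi(V_\xi) = S_\xi$ for $a\in A$, $\xi\in X_R$. Unwinding the definitions, $\Phi(\pi(T_a)) = \Phi(\rho(a)) = a$ for $a\in C(\mathbb{T})$, and $\Phi(\pi(C_R)) = \Phi(V_1) = S_1$. This is exactly the claimed isomorphism, so the proof is complete modulo the two cited results. The main obstacle is conceptual rather than computational: one must be confident that the hypotheses of Proposition~\ref{prop:uniqueness} are genuinely met here — that $\rho$ is \emph{injective} (not just the canonical quotient being nonzero on Toeplitz operators with continuous symbol, which is the classical fact that $\mathcal{T}/\mathcal{K}\cong C(\mathbb{T})$) and that the generating set really is $\{\rho(a), V_\xi\}$. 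Both are handled above, so in the write-up I would keep the argument short: dispatch the finite-order case by citing Theorem~\ref{thm:Jury}, then in the remaining case verify $B = \mathcal{OC}_R$ via $V_1 = \pi(C_R)$ and $\rho(z) = \pi(T_z)$, and invoke Propositions~\ref{prop:relation} and \ref{prop:uniqueness}.
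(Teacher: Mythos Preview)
Your proposal is correct and follows exactly the paper's approach: cite Theorem~\ref{thm:Jury} for the finite-order M\"{o}bius case, and in the remaining case combine Propositions~\ref{prop:relation} and~\ref{prop:uniqueness}. The only content you add beyond the paper's terse proof is the explicit check that $B=\mathcal{OC}_R$ via $V_1=\pi(C_R)$ and $\rho(a)=\pi(T_a)$, which is indeed the one connecting detail worth spelling out.
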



\begin{proof}
If $R$ is a M\"{o}bius transformation of finite order,
this statement is the same as Theorem \ref{thm:Jury}.
Otherwise, it follows immediately from Propositions
\ref{prop:relation} and \ref{prop:uniqueness}.
\end{proof}


\begin{rem}
Let $R$ be a finite Blaschke product and
$\alpha(a) = a \circ R$ for $a \in C(\mathbb{T})$.
It is easy to see that
the Aleksandrov operator $A_R$ on $C(\mathbb{T})$
is a transfer operator for the pair $(C(\mathbb{T}), \alpha)$
in the sense of Exel \cite{E}.
If $R$ is a M\"{o}bius transformation of infinite order which maps $\mathbb{D}$ to itself 
or a finite Blaschke product of degree at least two, then the element $\pi(C_R)$ of the composition
operator in $\mathcal{OC}_R$ corresponds exactly to the implementing
element in Exel's crossed product
$C(\mathbb{T}) \rtimes_{\alpha, A_R} \mathbb{N}$ in \cite{E}.
It follows directly from the fact that $\mathcal{O}_{X_R}$ is naturally
isomorphic to $C(\mathbb{T}) \rtimes_{\alpha, A_R} \mathbb{N}$.
\end{rem}

\begin{rem}
Let $R$ be a M\"{o}bius transformation of finite order $q$.
It is known that $R$ is conjugate to $\varphi(z) = e^{2 \pi i (p/q)} z$
by a M\"{o}bius transformation of $\mathbb{D}$
where $p/q$ is in lowest terms.
By Theorem \ref{thm:uniqueness}, $\mathcal{OC}_R$ is isomorphic to
$C(\mathbb{T}) \rtimes _{\beta} \mathbb{Z} / q \mathbb{Z}$,
where $\beta(a) = a \circ \varphi$ for $a \in C(\mathbb{T})$.
It is known that the crossed product
$C(\mathbb{T}) \rtimes _{\beta} \mathbb{Z} / q \mathbb{Z}$
is isomorphic to $M_q (C(\mathbb{T}))$
since the orbit space $\mathbb{T} / \varphi$
is homeomorphic to $\mathbb{T}$.
Therefore $\mathcal{OC}_R$ is isomorphic to $M_q (C(\mathbb{T}))$.
\end{rem}


\begin{cor} \label{cor:nuclear}
Let $R$ be a finite Blaschke product.
Then $\mathcal{OC}_R$ is separable and nuclear,
and satisfies the Universal Coefficient Theorem.
\end{cor}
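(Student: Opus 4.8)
The plan is to reduce everything to the structural description of $\mathcal{OC}_R$ furnished by Theorem \ref{thm:uniqueness}, and then invoke known permanence properties of separability, nuclearity, and the UCT class (the "bootstrap class" $\mathcal{N}$ of Rosenberg--Schochet). First I would split into the two cases of Theorem \ref{thm:uniqueness}. If $R$ is a M\"obius transformation of finite order $q$, then $\mathcal{OC}_R \cong C(\mathbb{T}) \rtimes_\alpha \mathbb{Z}/q\mathbb{Z}$ (indeed $\cong M_q(C(\mathbb{T}))$ by the subsequent remark). Here $C(\mathbb{T})$ is separable, nuclear, and in $\mathcal{N}$; nuclearity is preserved under crossed products by amenable (in particular finite) groups, separability is obviously preserved, and the UCT is preserved under such crossed products (or one simply notes $M_q(C(\mathbb{T}))$ is Morita equivalent to $C(\mathbb{T})$, and all three properties are Morita invariant).

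In the remaining case $\mathcal{OC}_R \cong \mathcal{O}_{X_R}$, the Cuntz--Pimsner algebra of the Hilbert bimodule $X_R$ over $A = C(\mathbb{T})$. Since $A$ is separable and $X_R$ is finitely generated (it has the finite orthonormal basis $\{u_i\}_{i=1}^n$ by Proposition \ref{prop:relation}), $\mathcal{O}_{X_R}$ is separable: it is generated by the finitely many elements $i(z), i(\bar z), S_{u_1}, \dots, S_{u_n}$ over the countable dense $*$-subalgebra one gets from polynomials. For nuclearity and the UCT I would appeal to the six-term exact sequence of Pimsner \cite{Pi}, which presents $\mathcal{O}_{X_R}$ up to $KK$-equivalence (and at the level of ideals/quotients) in terms of $A$ and $\mathcal{K}(X_R)$; more directly, one uses the standard fact that if $A$ is nuclear and lies in $\mathcal{N}$ and $X$ is a (nondegenerate, full, here even finitely generated) $C^*$-correspondence over $A$, then $\mathcal{O}_X$ is nuclear and lies in $\mathcal{N}$ — this is due to Germain and to Katsura (see \cite[Appendix]{K} or the references therein). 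Concretely, $\mathcal{K}(X_R)$ is nuclear and in $\mathcal{N}$ because $X_R$ is finitely generated over the nuclear, UCT algebra $A$ (so $\mathcal{K}(X_R)$ is a corner of $M_n(A)$-type algebra), and then the Toeplitz extension together with Pimsner's exact sequence propagates both properties to $\mathcal{O}_{X_R}$, using that $\mathcal{N}$ is closed under extensions.

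The only real point requiring care — and the step I expect to be the main obstacle to write cleanly — is citing the correct form of the Germain--Katsura result, since it must apply to an $A$-correspondence with injective left action over a non-simple, non-purely-infinite coefficient algebra $C(\mathbb{T})$; I would want to verify that the hypotheses (nuclearity of $A$ and of $\mathcal{K}(X)$, $A \in \mathcal{N}$) are exactly those met here, and the finiteness of the basis makes this transparent. Everything else is an invocation of permanence properties. Hence in both cases $\mathcal{OC}_R$ is separable, nuclear, and satisfies the UCT.
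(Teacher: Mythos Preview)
Your proposal is correct and follows essentially the same route as the paper: split via Theorem \ref{thm:uniqueness}, handle the finite-order M\"obius case via the remark that $\mathcal{OC}_R \cong M_q(C(\mathbb{T}))$, and for the Cuntz--Pimsner case invoke Katsura's results on nuclearity and the UCT for $\mathcal{O}_X$ when the coefficient algebra is nuclear and in the bootstrap class. The paper's proof is simply the terse citation of \cite[Corollary 7.4, Proposition 8.8]{K} (rather than the Appendix) together with Theorem \ref{thm:uniqueness} and the remark, so your write-up is just a more expanded version of the same argument.
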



\begin{proof}
This follows from Theorem \ref{thm:uniqueness},
\cite[Corollary 7.4, Proposition 8.8]{K},
and the above remark.
\end{proof}


We compute the $K$-group of $\mathcal{OC}_R$.
The finite orthonormal basis $\{u_i \}_{i=1} ^n$ of $X_R$
plays an important role in this proof.


\begin{thm} \label{thm:K-group}
Let $R$ be a finite Blaschke product of degree $n$.
\begin{enumerate}
\item
If $n =1$ and $R$ has finite order,
then $K_0 (\mathcal{OC}_R) \cong \mathbb{Z}$
and $K_1 (\mathcal{OC}_R) \cong \mathbb{Z}$.
\item
If $n =1$ and $R$ has infinite order, then
$K_0 (\mathcal{OC}_R) \cong \mathbb{Z}^2$
and $K_1 (\mathcal{OC}_R) \cong \mathbb{Z}^2$.
\item
If $n \geq 2$,
then $(\, K_0 (\mathcal{OC}_R) , \,  [I]_0, \, K_1 (\mathcal{OC}_R) \,)
\cong
(\, \mathbb{Z} \oplus \mathbb{Z}/ (n-1) \mathbb{Z}, \, (0, 1), \, \mathbb{Z} \, )$.
\end{enumerate}
\end{thm}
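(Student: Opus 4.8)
The plan is to handle the three cases separately. In case (1), $R$ is a M\"obius transformation of finite order $q$, so by the remark following Theorem~\ref{thm:uniqueness} one has $\mathcal{OC}_R\cong M_q(C(\mathbb{T}))$, and Morita invariance of $K$-theory gives $K_0(\mathcal{OC}_R)\cong K_0(C(\mathbb{T}))\cong\mathbb{Z}$ and $K_1(\mathcal{OC}_R)\cong K_1(C(\mathbb{T}))\cong\mathbb{Z}$. In case (2), $R$ has degree one and infinite order, so $\mathcal{OC}_R\cong C(\mathbb{T})\rtimes_\alpha\mathbb{Z}$ with $\alpha(a)=a\circ R$ by Theorem~\ref{thm:Jury}(2), and I would apply the Pimsner--Voiculescu exact sequence. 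The point is that $\alpha_*$ is the identity on $K_0(C(\mathbb{T}))$ (it fixes $[1]_0$) and on $K_1(C(\mathbb{T}))\cong\mathbb{Z}\langle[z]_1\rangle$ (because $R$ restricts to an orientation-preserving homeomorphism of $\mathbb{T}$, so $z\circ R$ has winding number $1$); hence $\mathrm{id}-\alpha_*=0$ on $K_0$ and on $K_1$, the six-term sequence degenerates into split short exact sequences $0\to\mathbb{Z}\to K_i(\mathcal{OC}_R)\to\mathbb{Z}\to 0$, and $K_0(\mathcal{OC}_R)\cong\mathbb{Z}^2\cong K_1(\mathcal{OC}_R)$.

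For case (3), where $n\ge 2$, Theorem~\ref{thm:uniqueness} gives $\mathcal{OC}_R\cong\mathcal{O}_{X_R}$. Since $X_R$ has the finite orthonormal basis $\{u_i\}_{i=1}^n$, it is finitely generated projective as a right $A$-module and $\mathcal{K}(X_R)=\mathcal{L}(X_R)\cong M_n(A)$; in particular $\phi(A)\subseteq\mathcal{K}(X_R)$, so Pimsner's six-term exact sequence \cite{Pi} applies:
\begin{gather*}
K_0(A)\xrightarrow{\ \mathrm{id}-[X_R]\ }K_0(A)\xrightarrow{\ \iota_*\ }K_0(\mathcal{O}_{X_R})\\
\longrightarrow K_1(A)\xrightarrow{\ \mathrm{id}-[X_R]\ }K_1(A)\xrightarrow{\ \iota_*\ }K_1(\mathcal{O}_{X_R})\longrightarrow K_0(A),
\end{gather*}
where $A=C(\mathbb{T})$, the map $\iota_*$ is induced by the inclusion $A\hookrightarrow\mathcal{O}_{X_R}$, and $[X_R]$ acts on $K_*(A)$ as $\phi_*$ followed by the canonical isomorphism $K_*(M_n(A))\cong K_*(A)$. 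On $K_0(A)=\mathbb{Z}\langle[1_A]_0\rangle$: $\phi$ is unital, so $\phi_*[1_A]_0=[I_n]_0$, which corresponds to $n[1_A]_0$; hence $[X_R]$ is multiplication by $n$, and $\mathrm{id}-[X_R]$ is multiplication by $1-n$, injective with cokernel $\mathbb{Z}/(n-1)\mathbb{Z}$.

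The heart of the proof is the computation on $K_1(A)=\mathbb{Z}\langle[z]_1\rangle$, i.e.\ the class of the unitary $\phi(z)\in U(M_n(C(\mathbb{T})))$, equivalently the winding number of $\det\phi(z)$. Using $\{u_i\}$ to identify $\mathcal{L}(X_R)$ with $M_n(C(\mathbb{T}))$, one has $\phi(z)_{ij}(w)=A_R(z\,\overline{u_i}u_j)(w)=\sum_{\zeta\in R^{-1}(w)}|R'(\zeta)|^{-1}\zeta\,\overline{u_i(\zeta)}u_j(\zeta)$. Enumerating $R^{-1}(w)=\{\zeta_1,\dots,\zeta_n\}$ and setting $U(w)_{ki}=u_i(\zeta_k)/\sqrt{|R'(\zeta_k)|}$, the orthonormality relation $A_R(\overline{u_i}u_j)=\delta_{ij}$ says exactly that $U(w)^*U(w)=I_n$, so $U(w)$ is unitary; then $\phi(z)(w)=U(w)^*\diag(\zeta_1,\dots,\zeta_n)U(w)$, hence $\det\phi(z)(w)=\prod_{\zeta\in R^{-1}(w)}\zeta$. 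With $R(z)=\lambda\prod_k\frac{z-z_k}{1-\overline{z_k}z}$, the preimages $R^{-1}(w)$ are the $n$ roots of $N_w(z)=\lambda\prod_k(z-z_k)-w\prod_k(1-\overline{z_k}z)$, so comparing its leading and constant coefficients yields $\prod_{\zeta\in R^{-1}(w)}\zeta=\dfrac{\lambda\prod_k z_k-(-1)^n w}{\lambda-(-1)^n w\prod_k\overline{z_k}}$; since $|\prod_k z_k|<1=|\lambda|$, as $w$ runs once around $\mathbb{T}$ the numerator has winding number $1$ and the denominator $0$. Thus $\det\phi(z)$ has winding number $1$, so $[\phi(z)]_1=[z]_1$, whence $[X_R]$ acts as the identity on $K_1(A)$ and $\mathrm{id}-[X_R]=0$ there.

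Feeding these into the six-term sequence and chasing exactness: $\mathrm{id}-[X_R]=0$ on $K_1(A)$ makes $\iota_*\colon K_1(A)\to K_1(\mathcal{O}_{X_R})$ injective, and injectivity of $\mathrm{id}-[X_R]$ on $K_0(A)$ makes the boundary $K_1(\mathcal{O}_{X_R})\to K_0(A)$ vanish, so $\iota_*$ is an isomorphism and $K_1(\mathcal{OC}_R)\cong\mathbb{Z}$; likewise one gets a split exact sequence $0\to\mathbb{Z}/(n-1)\mathbb{Z}\to K_0(\mathcal{O}_{X_R})\to\mathbb{Z}\to 0$, so $K_0(\mathcal{OC}_R)\cong\mathbb{Z}\oplus\mathbb{Z}/(n-1)\mathbb{Z}$, with $[I]_0=\iota_*[1_A]_0$ equal to the image of the generator of $K_0(A)$, namely the generator of the $\mathbb{Z}/(n-1)\mathbb{Z}$-summand, i.e.\ $(0,1)$. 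I expect the genuine work to be concentrated in part (3): identifying $\phi(z)$ with the matrix described above under $\mathcal{L}(X_R)\cong M_n(C(\mathbb{T}))$, and carrying out the winding-number count for $\det\phi(z)=\prod_{\zeta\in R^{-1}(\cdot)}\zeta$---the one place where the explicit Blaschke form of $R$ and the orthonormality of the Takenaka--Malmquist functions in $X_R$ are essential---while the reductions in cases (1) and (2), the $K_0$-part of $[X_R]$, and the diagram chase are routine.
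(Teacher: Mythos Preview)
Your proposal is correct, and the overall architecture---treating (1) via $M_q(C(\mathbb{T}))$, (2) via the crossed product/Pimsner--Voiculescu sequence, and (3) via Pimsner's six-term sequence with $[X_R]_0$ equal to multiplication by $n$ and $[X_R]_1=\mathrm{id}$---matches the paper exactly.

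The one genuine difference is how you establish $[X_R]_1=\mathrm{id}$ on $K_1(C(\mathbb{T}))$. You compute $\phi(z)$ directly: you diagonalize it pointwise via the unitary $U(w)$ built from the values $u_i(\zeta_k)/\sqrt{|R'(\zeta_k)|}$, obtain $\det\phi(z)(w)=\prod_{\zeta\in R^{-1}(w)}\zeta$, and then use Vieta on the Blaschke numerator to read off winding number $1$. This is correct and pleasantly explicit. The paper instead avoids computing $\phi(z)$ altogether by evaluating $[X_R]_1$ on the multiple $[R]_1=n[e_1]_1$ rather than on the generator $[e_1]_1$: from the bimodule relations one has $R\cdot u_i = u_i\cdot e_1$, so $\phi(R)=\diag(e_1,\dots,e_1)$ in $M_n(C(\mathbb{T}))$, whence $[X_R]_1(n[e_1]_1)=[X_R]_1([R]_1)=n[e_1]_1$; since $K_1(C(\mathbb{T}))\cong\mathbb{Z}$ is torsion-free this forces $[X_R]_1=\mathrm{id}$. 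The paper's trick is shorter and uses nothing about $R$ beyond its winding number; your route is longer but gives an explicit formula for $\det\phi(z)$ and makes transparent where the orthonormality of the Takenaka--Malmquist system enters. For locating $[I]_0$, your observation that $\iota_*[1_A]_0$ lands in $\operatorname{im}\iota_*=\ker\delta_0$, which is precisely the torsion summand, is in fact slightly cleaner than the paper's phrasing (the paper writes $[1]_0=(k,1)$ and then invokes $(n-1)[1]_0=0$ to force $k=0$, which is the same content).
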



\begin{proof}
By the above remark, (1) is obvious. 
It is sufficient to compute the K-group of
$\mathcal{O}_{X_R}$ by Theorem \ref{thm:uniqueness}.
Put $A=C(\mathbb{T})$.
We use the following six-term exact sequence due to Pimsner \cite{Pi}
(see also \cite{K}).

\begin{align*}
\begin{CD}
K_0(A)  @>\text{${\rm id}-[X_R]_0$}>> K_0(A) @>\text{$i_*$}>> K_0(\mathcal{O}_{X_R})\\
@A\text{$\delta_1$}AA @. @VV\text{$\delta_0$}V \\
K_1(\mathcal{O}_{X_R})  @<<\text{$i_*$}< K_0(A) @<<\text{${\rm id}-[X_R]_1$}< K_1(A),
\end{CD}
\end{align*}
where $i: A \to \mathcal{O}_{X_R}$ is the natural inclusion and
$[X_R]_j$ are the following composition maps
\begin{align*}
\begin{CD}
K_j(A)  @>\text{$\phi_*$}>> K_j(\mathcal{L}(X_R)) @>{\text{$\cong$}}>> K_j(A)
\end{CD}
\end{align*}
for $j=0,1$.
We can identify $\mathcal{L}(X_R)$ with $M_n(A)$ using the finite orthonormal
basis $\{u_i \}_{i=1} ^n$ of $X_R$. By the definition of the action of $X_R$,
$\phi(R) u_i = u_i \cdot e_1$ for
$i= 1, \dots , n$. Thus
\begin{align*}
  \phi(1) =       \begin{pmatrix}
                    1 & 0 & \dots & 0 \\
                    0 & 1 & \dots & 0 \\
                    \vdots &\vdots & \ddots & \vdots\\
                    0 & 0 & \dots & 1
                  \end{pmatrix}
\quad
\text{and}
\quad
\phi(R) =         \begin{pmatrix}
                    e_1 & 0 & \dots & 0 \\
                    0 & e_1 & \dots & 0 \\
                    \vdots &\vdots & \ddots & \vdots\\
                    0 & 0 & \dots & e_1
                  \end{pmatrix},
\end{align*}
where $e_1(z) = z$ for $z \in \mathbb{T}$.
Since $R:\mathbb{T} \to \mathbb{T}$ is an orientation-preserving $1$ to $n$
map, we have  $[R]_1 = n[e_1]_1$.
Thus $[X_R]_0 ([1]_0) = n[1]_0$
and $[X_R]_1 (n [e_1]_1) = [X_R]_1 ([R]_1) = n[e_1]_1$.
Since $K_0(A) = \mathbb{Z} [1]_0$ and $K_1(A) = \mathbb{Z} [e_1]_1$,
$[X_R]_0$ is the endomorphism induced by multiplication
by $n$ and $[X_R]_1$ is the identity map.
Therefore we can compute the K-group by the above exact sequence.
Let $n \geq 2$. Using again the above exact sequence, $[1]_0$
corresponds to $(k,1) \in \mathbb{Z} \oplus \mathbb{Z}/ (n-1) \mathbb{Z}$
for some $k$.
Since $\sum_{i=1} ^n S_{u_i} S_{u_i} ^* = 1$
and $S_{u_i} ^*  S_{u_i} = 1$, we have $(n-1) [1]_0 = 0$.
Thus $k = 0$.
\end{proof}


Let $R$ be a finite Blaschke product of degree at least two.
We next consider the simplicity of the $C^*$-algebra $\mathcal{OC}_R$.
We show that it can be characterized by the dynamics 
near the Denjoy-Wolff point of $R$.
First we consider a relation between the $C^*$-algebra $\mathcal{OC}_R$
and the $C^*$-algebra $\mathcal{O}_R (J_R)$ associated with the
complex dynamical system introduced in \cite{KW}.

Let $R$ be a rational function of degree at least two.
We recall the definition of the $C^*$-algebra
$\mathcal{O}_R (J_R)$. 
Since the Julia set $J_R$ is completely invariant under $R$, that is,
$R(J_R) = J_R = R^{-1}(J_R)$, we can consider the restriction 
$R|_{J_R} : J_R \rightarrow J_R$.
Let $B= C(J_R)$ and $X = C(\graph R|_{J_R})$,
where $\graph R|_{J_R} = \{(z,w) \in J_R \times J_R \, \, | \, \, w = R(z)\} $ 
is the graph of $R$.
We denote by $e(z)$ the branch index of $R$ at $z$.
Then $X$ is a $B$-$B$ bimodule over $B$ by 
\[
  (a\cdot f \cdot b)(z,w) = a(z)f(z,w)b(w),\quad a,b \in B, \, 
  f \in X.
\]
We define a $B$-valued inner product $\langle \ , \ \rangle_B$ on $X$ by 
\[
  \langle f, g \rangle_B(w) = \sum _{z \in R^{-1}(w)} e(z)
  \overline{f(z,w)}g(z,w),
  \quad f,g \in X, \, w \in J_R.
\]
Thanks to the branch index $e(z)$, the inner product above gives a continuous
function and $X$ is a full Hilbert bimodule over $B$ without completion. 
The left action of $B$ is unital and faithful. 
The $C^*$-algebra ${\mathcal O}_R(J_R)$
is defined as the Cuntz-Pimsner algebra of the Hilbert bimodule 
$X= C(\graph R|_{J_R})$ over 
$B = C(J_R)$. 


Let $R$ be a finite Blaschke product of degree at least two.
By Proposition \ref{prop:classification}, the Julia set
$J_R$ is $\mathbb{T}$ or a Cantor set on $\mathbb{T}$.
Since $R$ has no branched point on $\mathbb{T}$,
the branch index  of $R$ is $e(z) = 1$ for $z \in \mathbb{T}$.
Let $Y_R = C(J_R)$. Then $Y_R$ is a $B$-$B$ bimodule
by
\[
   (a \cdot \xi \cdot b) (z)
   = a(z) \xi(z) b(R(z)), \quad a, b \in B, \, \xi \in Y_R.
\]
We define a $B$-valued inner product $\langle \ , \ \rangle_B$ on $Y_R$ by
\[
  \langle \xi, \eta \rangle_B (w) = \sum _{z \in R^{-1} (w)} \frac{1}{|R'(z)|}
  \overline{\xi(z)} \eta(z), \quad \xi, \eta \in Y_R, \, w \in J_R.
\]
Let $\Psi:X \to Y_R$ be given by $(\Psi(f))(z)
= \sqrt{|R'(z)|} \, f(z, R(z))$ for
$f \in X$ and $z \in J_R$.
Since $|R'|$ is a positive invertible function on $J_R$,
it is easy to see that $\Psi$ is an isomorphism of Hilbert bimodules over $B$.
Therefore $\mathcal{O}_R(J_R)$ is isomorphic to the Cuntz-Pimsner algebra
$\mathcal{O}_{Y_R}$.

We shall show a
quotient algebra of $\mathcal{OC}_R$ by an ideal induced by
the Julia set of $R$ is isomorphic to the $C^*$-algebra $\mathcal{O}_R(J_R)$
associated with the complex dynamical 
system $\{ R^{\circ m} \}_{m=1} ^{\infty}$.
We use a theorem on ideals of the Cuntz-Pimsner algebras.
There have been many studies on ideals of Cuntz-Pimsner algebras
\cite{FMR, KPW, K2, Pi}.


\begin{prop} \label{prop:ideal}
Let $R$ be a finite Blaschke product of degree at least two
and let $\pi$ be the canonical quotient map $\mathcal{TC}_R$ to
$\mathcal{OC}_R$.
Suppose $\mathcal{J}_R$ is the ideal of $\mathcal{OC}_R$
generated by
$\{ \pi(T_a) \, | \, a \in C(\mathbb{T}), \, a|_{J_R}=0 \}$.
Then $\mathcal{OC}_R / \mathcal{J}_R$ is
isomorphic to $\mathcal{O}_R (J_R)$.
In particular, $\mathcal{OC}_R / \mathcal{J}_R$ is simple.
Moreover, if $J_R = \mathbb{T}$, then
$\mathcal{OC}_R$ is isomorphic to $\mathcal{O}_R (J_R)$.
\end{prop}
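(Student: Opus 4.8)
The plan is to realize $\mathcal{O}_R(J_R)$ as a quotient of $\mathcal{O}_{X_R}$ by identifying $\mathcal{J}_R$ with the ideal of $\mathcal{O}_{X_R}$ arising from the closed $R$-invariant subset $J_R \subseteq \mathbb{T}$. Since $R$ is a finite Blaschke product of degree at least two, it is neither a M\"obius transformation nor the identity, so Theorem \ref{thm:uniqueness} gives an isomorphism $\mathcal{OC}_R \cong \mathcal{O}_{X_R}$ carrying $\pi(T_a)$ to $a$; thus $\mathcal{J}_R$ corresponds to the ideal $\mathcal{I}$ of $\mathcal{O}_{X_R}$ generated by $\{ a \in C(\mathbb{T}) \, | \, a|_{J_R} = 0 \}$. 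Let $I_0 := \{ a \in A \, | \, a|_{J_R} = 0 \}$, a closed two-sided ideal of $A = C(\mathbb{T})$ with $A/I_0 \cong C(J_R) = B$. Because $R(J_R) = J_R$ (complete invariance of the Julia set), $I_0$ is both positively invariant and negatively invariant in the sense needed for the theory of $X$-invariant ideals: one checks $\langle X_R, I_0 \cdot X_R \rangle_A \subseteq I_0$ and $\langle X_R \cdot I_0, X_R \rangle_A \subseteq I_0$, using the explicit formula $\langle \xi, \eta \rangle_A(w) = \sum_{z \in R^{-1}(w)} \frac{1}{|R'(z)|} \overline{\xi(z)} \eta(z)$ together with the fact that $w \in J_R$ forces every preimage $z \in R^{-1}(w)$ to lie in $J_R$, and vice versa.

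The key step is then to invoke the standard correspondence between $X$-invariant ideals of the coefficient algebra and gauge-invariant ideals of the Cuntz-Pimsner algebra (see \cite{FMR, KPW, K2}): the ideal $\mathcal{I}$ generated by $I_0$ is gauge-invariant, and the quotient $\mathcal{O}_{X_R}/\mathcal{I}$ is canonically isomorphic to the Cuntz-Pimsner algebra of the quotient correspondence $X_R/(X_R \cdot I_0)$ over $A/I_0 = B$. So it remains to identify this quotient correspondence with $Y_R = C(J_R)$ equipped with the bimodule structure $(a\cdot\xi\cdot b)(z) = a(z)\xi(z)b(R(z))$ and inner product $\langle \xi, \eta \rangle_B(w) = \sum_{z \in R^{-1}(w)} \frac{1}{|R'(z)|}\overline{\xi(z)}\eta(z)$. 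The quotient map $X_R = C(\mathbb{T}) \to C(J_R)$ is just restriction of functions to $J_R$; it is surjective by Tietze, intertwines the bimodule actions, and sends the $A$-valued inner product composed with $A \to B$ to the $B$-valued inner product on $Y_R$ — again a direct consequence of the preimage formula. Hence $X_R/(X_R\cdot I_0) \cong Y_R$ as Hilbert bimodules over $B$, so $\mathcal{O}_{X_R}/\mathcal{I} \cong \mathcal{O}_{Y_R}$, and by the isomorphism $\mathcal{O}_{Y_R} \cong \mathcal{O}_R(J_R)$ established just before this proposition, we conclude $\mathcal{OC}_R/\mathcal{J}_R \cong \mathcal{O}_R(J_R)$.

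Simplicity of $\mathcal{O}_R(J_R)$ is known from \cite{KW}, so it transfers to $\mathcal{OC}_R/\mathcal{J}_R$; alternatively one can note that the only closed $R$-invariant subsets of $J_R$ relevant here produce no nontrivial gauge-invariant ideals and combine this with an Exel-type aperiodicity argument as in the uniqueness section, but citing \cite{KW} is cleaner. Finally, if $J_R = \mathbb{T}$ then $I_0 = 0$, hence $\mathcal{J}_R = 0$, and the isomorphism $\mathcal{OC}_R \cong \mathcal{O}_R(J_R)$ is immediate from the previous display.

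The main obstacle I anticipate is verifying cleanly that $I_0$ is an $X_R$-invariant ideal in precisely the technical sense required by whichever gauge-invariant-ideal theorem is cited, and that the resulting quotient correspondence is the obvious one — these are the points where the complete invariance $R^{-1}(J_R) = J_R = R(J_R)$ and the absence of branch points on $\mathbb{T}$ (so $e \equiv 1$ there) must be used carefully to keep the inner-product formulas matching up; everything else is bookkeeping.
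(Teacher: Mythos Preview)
Your proposal is correct and follows essentially the same route as the paper: transport $\mathcal{J}_R$ to the ideal of $\mathcal{O}_{X_R}$ generated by $I_0=\{a\in C(\mathbb{T}):a|_{J_R}=0\}$ via Theorem~\ref{thm:uniqueness}, check $I_0$ is $X_R$-invariant using complete invariance of $J_R$, apply \cite[Corollary~3.3]{FMR} to get $\mathcal{O}_{X_R}/\mathcal{I}\cong \mathcal{O}_{X_R/X_RI_0}$, identify the quotient correspondence with $Y_R$ over $C(J_R)$, and quote \cite{KW} for simplicity. The only point the paper makes explicit that you leave implicit is that the left action on the quotient bimodule is faithful (needed to stay within the Cuntz--Pimsner framework rather than a relative one), but you correctly flag this as part of ``the technical sense required'' in your final paragraph.
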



\begin{proof}
Let $I_R = \{ a \in A = C(\mathbb{T}) \ | \  a|_{J_R} = 0 \}$
and let $\mathcal{I}_R$ be the ideal generated
by $I_R$ in $\mathcal{O}_{X_R}$. It suffices to show that
$\mathcal{O}_{X_R} / \mathcal{I}_R$ is isomorphic to $\mathcal{O}_{Y_R}$
because
there exists an isomorphism $\Phi: \mathcal{OC}_R \to \mathcal{O}_{X_R}$
as in Theorem \ref{thm:uniqueness}.
Since $J_R$ is completely invariant under $R$, we have
$\langle \xi, a \cdot \eta \rangle_A = 0$ for $a \in I_R$.
Thus $I_R$ is an $X_R$-invariant ideal. See \cite{FMR, KPW} for the
definition of invariant ideals.
Therefore $X_R / X_R I_R$ is naturally a Hilbert bimodule over $A/I_R$
whose left action is faithful.
Since $X_R$ has a finite basis, $\phi(A)$ is included in
$\mathcal{K}(X_R) = \mathcal{L}(X_R)$,
where $\phi$ is the left action of $X_R$.
By \cite[Corollary 3.3]{FMR},
the Cuntz-Pimsner algebra $\mathcal{O}_{X_R / X_R I_R}$
is canonically isomorphic to $\mathcal{O}_{X_R} / \mathcal{I}_R$.
We can identify $X_R / X_R I_R$ with the Hilbert bimodule $Y_R$ over
$B \cong A/I_R$, which completes the proof.
\end{proof}


We give a characterization of simplicity of the $C^*$-algebra
$\mathcal{OC}_R$ as a corollary using the Julia set of $R$.

\begin{cor} \label{cor:kirchberg}
Let $R$ be a finite Blaschke product of degree at least two.
Then the $C^*$-algebra
$\mathcal{OC}_R$ is simple if and only if $J_R =\mathbb{T}$.
Furthermore, if $J_R = \mathbb{T}$, then $\mathcal{OC}_R$ is
purely infinite.
\end{cor}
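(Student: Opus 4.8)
**The plan is to derive both statements from Proposition \ref{prop:ideal} together with the dynamical dichotomy in Proposition \ref{prop:classification}.**

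First I would recall from Proposition \ref{prop:ideal} that $\mathcal{OC}_R / \mathcal{J}_R \cong \mathcal{O}_R(J_R)$, and that this quotient is always simple. Hence $\mathcal{OC}_R$ itself is simple precisely when the ideal $\mathcal{J}_R$ is trivial, i.e.\ when $\mathcal{J}_R = \{0\}$, which forces $\mathcal{OC}_R \cong \mathcal{O}_R(J_R)$. So the heart of the matter is to show $\mathcal{J}_R = \{0\}$ if and only if $J_R = \mathbb{T}$. One direction is immediate: if $J_R = \mathbb{T}$, then the generating set $\{\pi(T_a) \mid a \in C(\mathbb{T}),\ a|_{J_R} = 0\}$ consists only of $\pi(T_0) = 0$, so $\mathcal{J}_R = \{0\}$ and the ``moreover'' clause of Proposition \ref{prop:ideal} already gives $\mathcal{OC}_R \cong \mathcal{O}_R(J_R)$, which is simple. (This also fits: in the language of the earlier section on Cuntz-Pimsner algebras, $I_R = \{a \in C(\mathbb{T}) \mid a|_{J_R} = 0\}$ is the zero ideal when $J_R = \mathbb{T}$.)

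For the converse I would argue contrapositively: if $J_R \neq \mathbb{T}$, then by Proposition \ref{prop:classification} the Julia set $J_R$ is a proper (Cantor) subset of $\mathbb{T}$, so there exists a nonzero $a \in C(\mathbb{T})$ with $a|_{J_R} = 0$. Then $\pi(T_a) \in \mathcal{J}_R$, and I must check that $\pi(T_a) \neq 0$, i.e.\ that $T_a$ is not compact. Since $a$ is a nonzero continuous symbol, the essential norm of $T_a$ equals $\|a\|_\infty > 0$ (the map $\mathcal{T}/\mathcal{K} \cong C(\mathbb{T})$ is isometric), so $\pi(T_a) \neq 0$ and $\mathcal{J}_R$ is a nonzero proper ideal — proper because $\mathcal{OC}_R / \mathcal{J}_R \cong \mathcal{O}_R(J_R) \neq 0$. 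Hence $\mathcal{OC}_R$ is not simple. This completes the equivalence.

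Finally, for pure infiniteness under the hypothesis $J_R = \mathbb{T}$: here $\mathcal{OC}_R \cong \mathcal{O}_R(J_R)$ with $J_R = \mathbb{T}$, and the $C^*$-algebra $\mathcal{O}_R(J_R)$ associated with a complex dynamical system of a rational map of degree at least two is known to be simple and purely infinite by \cite{KW} (this is exactly the situation recalled in the introduction for the case $R(0) = 0$, but the cited results of \cite{KW} apply to any rational $R$ of degree $\geq 2$). Alternatively one can invoke Theorem \ref{thm:uniqueness} to write $\mathcal{OC}_R \cong \mathcal{O}_{X_R}$ and apply the pure-infiniteness criteria for Cuntz-Pimsner algebras. \textbf{The main obstacle I anticipate} is ensuring that the hypotheses of the \cite{KW} (or Cuntz-Pimsner) pure-infiniteness theorem are genuinely met — in particular that $R|_{J_R}$ has no ``trivial'' behaviour on $\mathbb{T}$ and that the bimodule $Y_R$ is not merely from a homeomorphism — but since $\deg R \geq 2$ the map $R|_{\mathbb{T}}$ is genuinely $n$-to-one with $n \geq 2$, so the relevant expansiveness/non-injectivity hypotheses hold and the result applies directly.
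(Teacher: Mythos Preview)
Your proposal is correct and follows essentially the same line as the paper's proof: both directions hinge on Proposition \ref{prop:ideal}, with the forward direction (simplicity $\Rightarrow J_R=\mathbb{T}$) established contrapositively by exhibiting $\mathcal{J}_R$ as a nonzero proper ideal when $J_R\subsetneq\mathbb{T}$, and pure infiniteness coming from \cite[Theorem~3.8]{KW}. The only cosmetic difference is that the paper phrases the non-triviality of the ideal on the Cuntz--Pimsner side (using that $A/I_R$ embeds in $\mathcal{O}_{X_R}/\mathcal{I}_R$ via the \cite{FMR} isomorphism), whereas you argue directly in $\mathcal{OC}_R$ via injectivity of $a\mapsto\pi(T_a)$; through the isomorphism $\Phi$ of Theorem~\ref{thm:uniqueness} these are the same fact.
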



\begin{proof}
We use the same notation in the previous proof.
Suppose  $J_R \neq \mathbb{T}$. Then $I_R$ is a proper ideal in
$A=C(\mathbb{T})$.
The isomorphism $\mathcal{O}_{X_R / X_R I_R}$
to $\mathcal{O}_{X_R} / \mathcal{I}_R$ in the previous proof
maps $[a]_{A/I_R}$ to
$[a]_{\mathcal{O}_{X_R} / \mathcal{I}_R}$ for $a \in A$,
where $[a]_{A/I_R}$ and $[a]_{\mathcal{O}_{X_R} / \mathcal{I}_R}$ are
images of $a \in A$ under quotient maps $A \to A/I_R$ and
$\mathcal{O}_{X_R} \to \mathcal{O}_{X_R} / \mathcal{I}_R$ respectively.
Therefore $\mathcal{I}_R$
is a proper ideal of $\mathcal{O}_{X_R}$
and $\mathcal{OC}_R$ is not simple.
The rest follows immediately from Proposition \ref{prop:ideal} and
\cite[Theorem 3.8]{KW}.
\end{proof}


We show that the simplicity of the $C^*$-algebra $\mathcal{OC}_R$
can be characterized by the dynamics near the
Denjoy-Wolff point of $R$.


\begin{thm} \label{thm:simple}
Let $R$ be a finite Blaschke product of degree at least two.
If $R$ has a fixed point in $\mathbb{D}$ or
a parabolic fixed point on $\mathbb{T}$ with two attracting petals,
then the $C^*$-algebra $\mathcal{OC}_R$ is simple.
If $R$ has an attracting fixed point on $\mathbb{T}$ or
a parabolic fixed point on $\mathbb{T}$ with one attracting petal,
then $C^*$-algebra $\mathcal{OC}_R$ is not simple.
\end{thm}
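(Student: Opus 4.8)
The plan is to reduce Theorem~\ref{thm:simple} directly to Corollary~\ref{cor:kirchberg}, which already tells us that $\mathcal{OC}_R$ is simple if and only if the Julia set $J_R$ equals $\mathbb{T}$. So all that remains is to match up the four dynamical cases with the two possibilities $J_R = \mathbb{T}$ and $J_R$ a Cantor set on $\mathbb{T}$. This matching is exactly the content of Proposition~\ref{prop:classification}: a finite Blaschke product $R$ of degree at least two falls into one of four cases according to the nature of its Denjoy-Wolff point, and $J_R = \mathbb{T}$ precisely in cases (1) (a fixed point in $\mathbb{D}$) and (3) (a parabolic fixed point on $\mathbb{T}$ with two attracting petals), while $J_R$ is a Cantor set in cases (2) (an attracting fixed point on $\mathbb{T}$) and (4) (a parabolic fixed point on $\mathbb{T}$ with one attracting petal).

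First I would invoke Proposition~\ref{prop:classification} to record that these four cases are exhaustive and mutually exclusive, and to extract the dichotomy: $J_R = \mathbb{T}$ in cases (1) and (3), and $J_R \subsetneq \mathbb{T}$ (a Cantor set) in cases (2) and (4). Next I would apply Corollary~\ref{cor:kirchberg}: since $\mathcal{OC}_R$ is simple if and only if $J_R = \mathbb{T}$, cases (1) and (3) give a simple $\mathcal{OC}_R$ and cases (2) and (4) give a non-simple one. Combining these two facts yields the theorem statement verbatim. One should also note, as the remarks following the Denjoy-Wolff theorem in the preliminaries explain, that the hypotheses phrased in terms of the Denjoy-Wolff point are automatically well-posed: the unique fixed point $w_0 \in \overline{\mathbb{D}}$ with $|R'(w_0)| \le 1$ is the Denjoy-Wolff point, it is automatically attracting when $w_0 \in \mathbb{D}$, and when $w_0 \in \mathbb{T}$ one has $0 < R'(w_0) \le 1$, with the parabolic subcases (3) and (4) distinguished by whether $R''(w_0) = 0$ or not.

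Honestly, there is no real obstacle here: the theorem is a repackaging of Proposition~\ref{prop:classification} together with Corollary~\ref{cor:kirchberg}, and the only thing to be careful about is that the four hypotheses in the statement are stated in the same language (Denjoy-Wolff point, number of attracting petals) as the four cases of Proposition~\ref{prop:classification}, so that the identification of cases is immediate. The substantive work — identifying $\mathcal{OC}_R$ with a Cuntz-Pimsner algebra, analyzing its ideal structure via the Julia set, and applying the simplicity criterion of \cite{KW} — has already been carried out in Theorem~\ref{thm:uniqueness}, Proposition~\ref{prop:ideal}, and Corollary~\ref{cor:kirchberg}. So the proof is essentially a one-line deduction, and I would present it as such.
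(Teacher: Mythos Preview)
Your proposal is correct and matches the paper's own proof, which simply says ``Use Propositions~\ref{prop:classification} and~\ref{prop:ideal}.'' You cite Corollary~\ref{cor:kirchberg} in place of Proposition~\ref{prop:ideal}, but since the former is an immediate consequence of the latter, the argument is essentially identical.
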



\begin{proof}
Use Propositions \ref{prop:classification}
and \ref{prop:ideal}.
\end{proof}


\begin{ex}
Let $P_n(z) = z^n$ for $n \geq 2$.
Since $R$ has a fixed point $0$ in $\mathbb{D}$,
the Julia set $J_{P_n}$ is $\mathbb{T}$.
Therefore the $C^*$-algebra $\mathcal{OC}_{P_n}$ is simple.
We note that
the finite orthonormal basis $\{u_k \}_{k=1} ^n$ of $X_{P_n}$ is given by
$u_k (z) = z^{k-1}$ for $k = 1, \dots ,n$.
\end{ex}

We gave the following examples $R_1, \dots , R_4$ in the introduction of this
papar. Now we discuss the simplicity of these examples using Theorem
\ref{thm:simple}.


\begin{ex}
Let $R_1(z)= \frac{2z^2-1}{2-z^2}$.
Since $R_1$ has a fixed point $\frac{-3 + \sqrt{5}}{2}$ in
$\mathbb{D}$, the Julia set $J_{R_1}$ is $\mathbb{T}$.
Therefore the $C^*$-algebra $\mathcal{OC}_{R_1}$ is simple.
\end{ex}


\begin{ex}
Let $R_2(z)= \frac{2z^2+1}{2+z^2}$.
Since $R_2$ has an attracting fixed point $1$ on
$\mathbb{T}$, the Julia set $J_{R_2}$ is a Cantor set on $\mathbb{T}$.
Therefore the $C^*$-algebra $\mathcal{OC}_{R_2}$ is not simple.
\end{ex}


\begin{ex}
Let $R_3(z)= \frac{3z^2+1}{3+z^2}$.
Since $R_3$ has a parabolic fixed point $1$ on
$\mathbb{T}$ with two attracting petals, the Julia set $J_{R_3}$
is $\mathbb{T}$.
Therefore the $C^*$-algebra $\mathcal{OC}_{R_3}$ is simple.
\end{ex}


\begin{ex}
Let $R_4(z)= \frac{(3+i)z^2+(1-i)}{(3-i)+(1+i)z^2}$.
It is easy to see that $R_4$ is a finite Blaschke product.
Since $R_4$ has a parabolic fixed point $1$ on
$\mathbb{T}$ with one attracting petal,
the Julia set $J_{R_4}$ is a Cantor set on $\mathbb{T}$.
Therefore the $C^*$-algebra $\mathcal{OC}_{R_4}$ is not simple.
\end{ex}


The following theorem implies that
the degree is
a complete isomorphism invariant for the class of $\mathcal{OC}_R$
such that $R$ is a finite Blaschke product of degree
at least two and $J_R = \mathbb{T}$.


\begin{thm} \label{thm:classify}
Let $R_1$ and $R_2$ be finite Blaschke products of degree at least two.
If $\mathcal{OC}_{R_1}$ is isomorphic to $\mathcal{OC}_{R_2}$, then
$R_1$ and $R_2$ have the same degree.
Moreover, suppose $J_{R_1} = J_{R_2} = \mathbb{T}$.
If $R_1$ and $R_2$ have the same degree, then
$\mathcal{OC}_{R_1}$ is isomorphic to $\mathcal{OC}_{R_2}$.
\end{thm}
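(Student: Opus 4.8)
The plan is to prove the two implications separately: the forward direction from the $K$-group computation of Theorem \ref{thm:K-group}, and the converse from the Kirchberg--Phillips classification theorem together with the structural results already obtained.

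First I would treat the forward direction. Suppose $\mathcal{OC}_{R_1} \cong \mathcal{OC}_{R_2}$ and let $n_j \geq 2$ denote the degree of $R_j$. A $*$-isomorphism induces a group isomorphism $K_0(\mathcal{OC}_{R_1}) \cong K_0(\mathcal{OC}_{R_2})$, and by Theorem \ref{thm:K-group}(3) we have $K_0(\mathcal{OC}_{R_j}) \cong \mathbb{Z} \oplus \mathbb{Z}/(n_j-1)\mathbb{Z}$. The torsion subgroup of the right-hand side has order $n_j - 1$, so comparing torsion subgroups forces $n_1 - 1 = n_2 - 1$, i.e.\ $n_1 = n_2$. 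Note that this direction uses no hypothesis on the Julia sets.

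For the converse, assume $J_{R_1} = J_{R_2} = \mathbb{T}$ and $n_1 = n_2 =: n$. The idea is to check that each $\mathcal{OC}_{R_j}$ is a unital Kirchberg algebra satisfying the Universal Coefficient Theorem, so that it is classified by the triple $(K_0, [I]_0, K_1)$. Each $\mathcal{OC}_{R_j}$ is unital, being a quotient of $\mathcal{TC}_{R_j}$ which contains the identity operator; it is separable, nuclear and satisfies the UCT by Corollary \ref{cor:nuclear}; and, because $J_{R_j} = \mathbb{T}$, it is simple and purely infinite by Corollary \ref{cor:kirchberg}. Hence the Kirchberg--Phillips theorem \cite{Ki, Ph} applies. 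By Theorem \ref{thm:K-group}(3) both invariants equal
\[
  \bigl(\, \mathbb{Z} \oplus \mathbb{Z}/(n-1)\mathbb{Z}, \ (0,1), \ \mathbb{Z} \,\bigr),
\]
so the classification theorem yields an isomorphism $\mathcal{OC}_{R_1} \cong \mathcal{OC}_{R_2}$.

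The proof is therefore a matter of assembling earlier results, and I do not expect a serious obstacle. The one point deserving care is the verification of the hypotheses of the Kirchberg--Phillips theorem and, in particular, noting that when $J_R = \mathbb{T}$ the complete invariant $(K_0, [I]_0, K_1)$ depends on $R$ only through its degree --- which is exactly the content of Theorem \ref{thm:K-group}(3) --- so that no finer information about $R$ can obstruct the isomorphism.
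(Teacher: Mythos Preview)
Your proof is correct and follows exactly the same route as the paper, which simply records that the result follows from Corollary~\ref{cor:nuclear}, Theorem~\ref{thm:K-group}, Corollary~\ref{cor:kirchberg}, and the Kirchberg--Phillips theorem \cite{Ki,Ph}. You have merely spelled out the details---reading off the degree from the torsion of $K_0$ for the forward direction, and verifying the hypotheses of Kirchberg--Phillips for the converse---which is entirely appropriate.
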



\begin{proof}
This follows immediately from Corollary \ref{cor:nuclear}, Theorem
\ref{thm:K-group}, Corollary \ref{cor:kirchberg} and \cite{Ki,Ph}.
\end{proof}


\begin{ex}
Let $R$ be a finite Blaschke product of degree $n$
at least two with $R(0) = 0$.
This case was studied in \cite{HW}.
Since $R$ has a fixed point $0$ in $\mathbb{D}$,
the Julia set $J_R$ is $\mathbb{T}$.
By Theorem \ref{thm:classify}, the $C^*$-algebra $\mathcal{OC}_R$ is
isomorphic to the $C^*$-algebra $\mathcal{OC}_{P_n}$.
This was already proved in \cite{HW},
using the fact that $R$ is topologically conjugate to $P_n$.
\end{ex}


\begin{ex}
Let $R_1(z)= \frac{2z^2-1}{2-z^2}$ and $R_3(z)= \frac{3z^2+1}{3+z^2}$.
Then $J_{R_1} = J_{R_3} = \mathbb{T}$ and $R_1$ and $R_3$ have the same degree
two. By Theorem \ref{thm:classify},
the $C^*$-algebra $\mathcal{OC}_{R_1}$ is isomorphic to the $C^*$-algebra
$\mathcal{OC}_{R_3}$. Moreover
$\mathcal{OC}_{R_i}$ is also isomorphic to $\mathcal{OC}_{P_2}$ for $i=1, 3$.
\end{ex}


Let $R$ be a finite Blaschke product of degree $n$ at least two
satisfying (2) in Proposition \ref{prop:classification}.
By Proposition \ref{prop:ideal}, the quotient algebra
$\mathcal{OC}_R / \mathcal{J}_R$ is simple. Furthermore, we
can show that $\mathcal{OC}_R / \mathcal{J}_R$ is isomorphic to
the Cuntz algebra $\mathcal{O}_n$.


\begin{prop}
Let $R$ be a finite Blaschke product of degree $n$ at least two
which has an attracting fixed point on $\mathbb{T}$ and
let $\pi$ be the canonical quotient map $\mathcal{TC}_R$ to
$\mathcal{OC}_R$.
Suppose $\mathcal{J}_R$ is the ideal of $\mathcal{OC}_R$
generated by
$\{ \pi(T_a) \, | \, a \in C(\mathbb{T}), \, a|_{J_R}=0 \}$.
Then $\mathcal{OC}_R / \mathcal{J}_R$ is isomorphic to the Cuntz algebra
$\mathcal{O}_n$.
\end{prop}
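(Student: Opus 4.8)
The plan is to combine Proposition \ref{prop:ideal} with the general structure theory of Cuntz-Pimsner algebras. By Proposition \ref{prop:ideal}, $\mathcal{OC}_R / \mathcal{J}_R$ is isomorphic to $\mathcal{O}_R(J_R)$, which in turn is isomorphic to the Cuntz-Pimsner algebra $\mathcal{O}_{Y_R}$ over $B = C(J_R)$, where $Y_R = C(J_R)$ with the inner product $\langle \xi, \eta \rangle_B(w) = \sum_{z \in R^{-1}(w)} \frac{1}{|R'(z)|} \overline{\xi(z)} \eta(z)$. So it suffices to show $\mathcal{O}_{Y_R} \cong \mathcal{O}_n$. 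The key observation is that, since $R$ has an attracting fixed point on $\mathbb{T}$, the Julia set $J_R$ is a Cantor set by Proposition \ref{prop:classification}, hence totally disconnected.

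First I would show that $Y_R$ has a finite orthonormal basis $\{v_i\}_{i=1}^n$ of continuous functions over $B = C(J_R)$. On the Cantor set $J_R$, the branches of $R^{-1}$ can be separated: since $R: J_R \to J_R$ is a local homeomorphism (no branch points on $\mathbb{T}$) and $J_R$ is totally disconnected, one can find a clopen partition of $J_R$ into $n$ pieces, each mapped homeomorphically onto $J_R$ by $R$; equivalently, the Takenaka-Malmquist functions $u_i$ restricted to $J_R$, rescaled appropriately, or a suitable modification of them, furnish such a basis. (Concretely, passing through the isomorphism $X_R/X_R I_R \cong Y_R$ from Proposition \ref{prop:ideal}, the images $[u_i]$ of the basis $\{u_i\}$ of $X_R$ give a finite orthonormal basis of $Y_R$.) Thus $\mathcal{O}_{Y_R}$ is generated by isometries $S_1, \dots, S_n$ with $S_i^* S_j = \delta_{ij} 1$ and $\sum_{i=1}^n S_i S_i^* = 1$, which is precisely the defining relation of $\mathcal{O}_n$.

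Next I would produce a surjective $*$-homomorphism $\mathcal{O}_n \to \mathcal{O}_{Y_R}$ from the universal property of $\mathcal{O}_n$, sending the canonical generators of $\mathcal{O}_n$ to $S_{v_1}, \dots, S_{v_n}$. For injectivity I would invoke the simplicity of $\mathcal{O}_n$: since $\mathcal{O}_n$ is simple and $\mathcal{O}_{Y_R}$ is nonzero, the homomorphism is automatically injective. Alternatively, and perhaps cleanly, one can simply note that $\mathcal{O}_{Y_R}/\mathcal{J}_R' \cong \mathcal{O}_R(J_R)$ is simple by Proposition \ref{prop:ideal} and by \cite[Theorem 3.8]{KW} purely infinite, satisfies the UCT and is nuclear by Corollary \ref{cor:nuclear}, with $K$-theory that one can read off; but the direct argument via the universal property of $\mathcal{O}_n$ avoids the classification machinery entirely.

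The main obstacle I expect is establishing that $Y_R$ admits a finite orthonormal basis consisting of \emph{continuous} functions — equivalently, that the $n$ local inverse branches of $R$ on the Cantor set $J_R$ are globally defined continuous maps $J_R \to J_R$. On the full circle $\mathbb{T}$ there is a global monodromy obstruction (which is why $X_R$ over $C(\mathbb{T})$ is a genuinely nontrivial bimodule and $\mathcal{OC}_R$ is not just $M_n(C(\mathbb{T}))$), but on a totally disconnected compact set this obstruction vanishes: any local section of an $n$-to-$1$ covering extends to a global clopen decomposition. Making this precise requires a short topological argument about covering maps of Cantor sets, but once it is in place the rest — that the $S_{v_i}$ satisfy the Cuntz relations and generate, and that the universal map from $\mathcal{O}_n$ is an isomorphism by simplicity — is routine. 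In fact, the cleanest route is to simply transport the finite orthonormal basis $\{u_i\}_{i=1}^n$ of $X_R$ through the quotient isomorphism $X_R/X_R I_R \cong Y_R$ of Proposition \ref{prop:ideal}, so that no new basis needs to be constructed at all; then the identity $\sum_i S_{[u_i]} S_{[u_i]}^* = 1$, $S_{[u_i]}^* S_{[u_j]} = \delta_{ij}1$ holds in $\mathcal{O}_{Y_R}$, and the universal property of $\mathcal{O}_n$ together with its simplicity finishes the proof.
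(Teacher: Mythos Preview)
Your reduction via Proposition \ref{prop:ideal} to showing $\mathcal{O}_{Y_R}\cong\mathcal{O}_n$ is correct, and so is the observation that the images $[u_i]$ of the Takenaka--Malmquist basis form a finite orthonormal basis of $Y_R$, giving Cuntz isometries $S_{[u_1]},\dots,S_{[u_n]}$ in $\mathcal{O}_{Y_R}$ and hence an injective map $\mathcal{O}_n\to\mathcal{O}_{Y_R}$ by simplicity of $\mathcal{O}_n$. The gap is \emph{surjectivity}. You assert that ``$\mathcal{O}_{Y_R}$ is generated by isometries $S_1,\dots,S_n$,'' but by construction $\mathcal{O}_{Y_R}$ is generated by $C(J_R)$ \emph{together with} the $S_\xi$, and you never show that $C(J_R)$ lies in the $C^*$-subalgebra generated by the $S_{[u_i]}$ alone. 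Even passing to a basis $\{v_i\}$ built from a clopen partition $J_R=\bigsqcup_i E_i$ into inverse branches (which does exist, as you correctly argue, since $J_R$ is totally disconnected), the intersection of the image of $\mathcal{O}_n$ with $C(J_R)$ is only the closed span of the characteristic functions of cylinder sets $E_{i_1}\cap R^{-1}(E_{i_2})\cap\cdots\cap R^{-(k-1)}(E_{i_k})$. For these to be dense in $C(J_R)$ the partition must be \emph{generating} --- equivalently, $(J_R,R|_{J_R})$ must be topologically conjugate to the full one-sided $n$-shift. That is a genuine dynamical input (it holds here because in case (2) of Proposition \ref{prop:classification} the map $R$ is hyperbolic, hence uniformly expanding on $J_R$), and it is precisely what is missing from your argument. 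The ``main obstacle'' you identify (existence of global sections over a Cantor set) is not the real issue; the real issue is one step further. Your $K$-theory alternative does not close the gap either: Theorem \ref{thm:K-group} computes $K_*(\mathcal{OC}_R)$, not $K_*(\mathcal{O}_R(J_R))$, and the Pimsner six-term sequence for $\mathcal{O}_{Y_R}$ runs through $K_0(C(J_R))$, which for a Cantor set is far from $\mathbb{Z}$.

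The paper bypasses all of this by verifying a dynamical hypothesis and citing a result that packages the shift-conjugacy: since the branch set $B(R)$ lies in $\widehat{\mathbb{C}}\smallsetminus\mathbb{T}\subset F_R$, and $F_R$ is connected and contains the attracting fixed point $w_0\in\mathbb{T}$, every branch point lies in the immediate attracting basin of $w_0$; then \cite[Example 4.4]{KW} (which treats exactly this situation) gives $\mathcal{O}_R(J_R)\cong\mathcal{O}_n$ directly, and Proposition \ref{prop:ideal} finishes. If you want to salvage your direct route, you would need to supply the conjugacy $(J_R,R)\cong(\{1,\dots,n\}^{\mathbb{N}},\sigma)$, which amounts to reproving that cited example in this special case.
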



\begin{proof}
Let $w_0$ be the attracting fixed point on $\mathbb{T}$ and
let $B(R)$ be the set of all branched points of $R$.
Since $B(R) \subset \widehat{\mathbb{C}} \smallsetminus \mathbb{T}$
and $F_R$ is a connected set containing
$\widehat{\mathbb{C}} \smallsetminus \mathbb{T}$ and $w_0$,
$B(R)$ lies in the immediate attracting basin of $w_0$.
Applying Proposition \ref{prop:ideal} and \cite[Example 4.4]{KW},
we have the desired conclusion. 
\end{proof}


\begin{ex}
Let $R_2(z)= \frac{2z^2+1}{2+z^2}$.
Then $\mathcal{OC}_{R_2} / \mathcal{J}_{R_2}$
is isomorphic to the Cuntz algebra
$\mathcal{O}_2$.
\end{ex}


\begin{ex}
Let $R_0(z) = \frac{2z^2 -1}{z}$. This is considered in \cite[Example 4.4]{KW}.
Since $R_0$ maps the upper half plane to itself and $R_0(\mathbb{R}) \subset
\mathbb{R}$, the rational function $R_0$ is conjugate to a finite Blaschke
product $R$ by the Cayley transformation. Since
$R$ has an attracting fixed point $1$,
$\mathcal{OC}_R / \mathcal{J}_R$ is isomorphic to the Cuntz algebra
$\mathcal{O}_2$.
\end{ex}


\begin{ack}
The auther thanks Professor Yasuo Watatani for his constant encouragement
and advice.
\end{ack}

\end{document}